\newcommand{\Zero} {\mathcal{W}}
\newcommand{\Wero} {\mathcal{J}}
\newcommand{\dvar} {\boldsymbol{\varsigma}}
\newcommand{\CV} {\mathfrak{V}}
\newcommand{\CU} {\mathfrak{U}}
\newcommand{\CW} {\mathfrak{W}}
\newcommand{\Lin}  {\mathcal{L}}
\newcommand{\clos}[1]  {\mathsf{clos}(#1)}
\newcommand{\comp}[1]  {\mathsf{comp}(#1)}
\newcommand{\disk} {\mathscr{D}}
\newcommand{\diskc} {\breve{\mathscr{D}}}
\newcommand{\cone} {\mathscr{C}}
\newcommand{\hilbert}  {\mathscr{H}}
\newcommand{\innpr}[2]  {\langle #1,#2\rangle}
\newcommand{\hiln}[1]  {\xi(#1)}
\newcommand{\HLCV}  {\textsf{hlcVect}}
\newcommand{\Sem}[1]  {\textsf{Sem}(#1)}
\newcommand{\maxn}  {\textsf{max}}
\newcommand{\OO}  {\mathcal{O}}
\newcommand{\Pair}  {\Omega(E)}
\newcommand{\Pairr}  {\Omega}
\newcommand{\ExOp} {\mathcal{E}}
\newcommand{\B} {\mathrm{B}}
\newcommand{\boundf} {\mathscr{B}}
\newcommand{\uw} {\underline{w}}
\newcommand{\LLambda} {\mathbf{\Theta}}
\newcommand{\NN} {\mathbb{N}}
\newcommand{\ZZ} {\mathbb{Z}}
\newcommand{\RR} {\mathbb{R}}
\newcommand{\diffspace} {H}
\newcommand{\llambda} {\boldsymbol{\chi}}
\newcommand{\cchi} {\boldsymbol{\lambda}}
\newcommand{\CChi} {\boldsymbol{\Lambda}}
\newcommand{\wv} {\boldsymbol{\omega}}
\newcommand{\pp} {\mathfrak{p}}
\newcommand{\hh} {\mathfrak{h}}
\newcommand{\qq} {\mathfrak{q}}
\newcommand{\ext}[2]  {\mathrm{Ext}(#1,#2)}
\newcommand{\dind}  {\mathrm{s}}
\newcommand{\EINS} {\mathbf{1}}
\newcommand{\llleq} {\preceq}
\newcommand{\id} {\mathrm{id}}
\newcommand{\cube} {\mathcal{Q}}
\newcommand{\cubec} {\breve{\mathcal{Q}}}
\newcommand{\hsphere} {\mathcal{O}}
\newcommand{\hspherec} {\breve{\mathcal{O}}}
\newcommand{\hsphereb} {\mathcal{S}}
\newcommand{\hspherecb} {\breve{\mathcal{S}}}
\newcommand{\pr} {\mathrm{pr}}
\newcommand{\ul}[1] {\underline{#1}}
\newcommand{\wt}[1] {\widetilde{#1}}
\newcommand{\wh}[1] {\widehat{#1}}
\newcommand{\dd} {\mathrm{d}}
\newcommand{\im} {\mathrm{im}}
\newcommand{\cp} {\circ}
\newcommand{\bounded} {\mathcal{B}}
\newcommand{\compact} {\mathrm{C}}
\newcommand{\compacto} {\mathrm{K}}
\newcommand{\he} {\hspace{1pt}}
\renewcommand{\theenumi}{\arabic{enumi})} 
\renewcommand{\labelenumi}{\theenumi}
\let\origenumerate\enumerate
\def\enumerate{\origenumerate\itemsep0pt}
\let\origitemize\itemize
\def\itemize{\origitemize\itemsep0pt}
\newtheorem{application}{Application}
\newtheorem{theorem}{Theorem}
\newtheorem{proposition}{Proposition}
\newtheorem{lemma}{Lemma}
\newtheorem{corollary}{Corollary}
\newtheorem{remark}{Remark}
\newtheorem{example}{Example}
\def\blfootnote{\gdef\@thefnmark{}\@footnotetext}
\begin{document}
\title{A $\mathcal{C}^k$-Seeley-Extension-Theorem for\\ Bastiani's Differential Calculus}
\author{
  \textbf{Maximilian Hanusch}\thanks{\texttt{mhanusch@math.upb.de}}
  \\[1cm]
  Institut f\"ur Mathematik \\
  Universit\"at Paderborn\\
  Warburger Stra\ss{e} 100 \\
  33098 Paderborn \\
  Germany
}
\date{February 23, 2023}
\maketitle

\begin{abstract}
	We generalize a classical extension result by Seeley in the context of Bastiani's differential calculus to infinite dimensions. The construction follows Seeley's original approach, but is significantly more involved as not only $C^k$-maps (for $k\in \NN\cup\{\infty\}$) on (subsets of) half spaces are extended, but also continuous extensions of their differentials to some given piece of boundary of the domains under consideration. A further feature of the generalization is that we construct families of extension operators (instead of only one single extension operator) that fulfill certain compatibility (and continuity) conditions. Various applications are discussed as well. 
\end{abstract}

\tableofcontents 
\section{Introduction}
The extension problem for differentiable maps naturally arises in the context of manifolds with boundary or corners. In the finite-dimensional context Whitney's extension theorem \cite{WHITNEY} guarantees more generally the extendability of Whitney jets (families of continuous functions that define formal Taylor expansions) on closed subsets of euclidean spaces. A characterization of closed subsets that admit continuous linear extension operators on $C^\infty$-Whitney jets was given by Tidten in \cite{Tidt} (see \cite{FRE} for further investigations). Recent research into Whitney-type extension operators \cite{MICH,SCHM} is concerned with generalizations to maps on closed subsets of finite-dimensional manifolds (Whitney germs in \cite{MICH}, and in \cite{SCHM} subsets that satisfy the so-called cusp condition) with values in vector bundles or (infinite-dimensional) manifolds. 
In \cite{MICH}, the smooth category in the context of the convenient calculus \cite{COS} is considered, and in \cite{SCHM} the smooth category within Bastiani's differential calculus \cite{HG}. Throughout this paper, we work in Bastiani's setting that is recalled in Sect.\ \ref{oioisidsoidsoioidsoidsoids}. We refer to  \cite{HA,KHNM} for self-contained introductions into Bastiani's calculus.
  
Besides Whitney's approach, there is an alternative (significantly simpler) extension construction available that works for maps defined on half spaces.   
This approach is due to Seeley \cite{SEELEY}. He constructs a continuous linear map that extends such smooth maps $(-\infty,0)\times \RR^n\rightarrow \RR$ ($n\in \NN$) to $\RR\times \RR^n$, whose partial derivatives extend continuously to $(-\infty,0]\times \RR^n$. 
In this paper, we generalize Seeley's result into several directions:
 
Let $E,F$ be Hausdorff locally convex vector spaces, and denote the system of continuous seminorms on $F$ by $\Sem{F}$. For $k\in \NN\cup\{\infty\}$ and $U\subseteq E$ non-empty open, let $C^k(U,F)$ denote the set of all $k$-times continuously differentiable maps $U\rightarrow F$. Let $\Omega(E)$ denote the set of all pairs $(V,\CV)$ such that $V\subseteq E$ is non-empty open, and $\CV\subseteq E$ is contained in the closure of $V$ in $E$ with $V\subseteq \CV$. For $-\infty\leq a<b\leq  \infty$, let $\mathcal{C}^k_\CV((a,b)\times V,F)$ denote the set of all $f\in C^k((a,b)\times V,F)$, such that for each $\ell\in \NN$ with $\ell\leq k$, the $\ell$-th differential 
of $f$ extends to a continuous map 
\begin{align*}
		\ext{f}{\ell}\colon  (a,b]\times \CV\times (\RR\times E)^\ell \rightarrow F
\end{align*} 
(we set $(a,b]:= (a,\infty)$ if $b=\infty$ holds). 
Our main result Theorem \ref{aaaaakdskdsjkkjds} (stated to the full extent in Sect.\ \ref{kjfdkjfjkfdfd}) inter alia implies that, for $-\infty\leq a<\tau<0$ fixed, there exists a linear (extension) map 
\begin{align*}
	\ExOp\colon \mathcal{C}_{\CV}^k((a,0)\times V,F) \rightarrow \mathcal{C}_{\CV}^k((a,\infty)\times V,F),
\end{align*} 
such that for $f\in \mathcal{C}_{\CV}^k((a,0)\times V,F)$ and $0\leq \ell\leq k$, we have  
\begin{align*}
	\ext{\ExOp(f)}{\ell}|_{(a,0]\times \CV\times(\RR\times E)^\ell}\hspace{9.1pt}&=\ext{f}{\ell}\\
	\ext{\ExOp(f)}{\ell}|_{[|\tau|,\infty)\times \CV\times (\RR\times E)^\ell}&=0.
\end{align*}
For $E=\RR^n$, $F=\RR$, and $a=-\infty$, this implies Seeley's original theorem from \cite{SEELEY}. We mention, but do not present the details at this point, that Theorem \ref{aaaaakdskdsjkkjds} is formulated more generally in terms  of families of extension operators indexed by triples $(E,V,\CV)$, where $E$ runs over the class of Hausdorff locally convex vector spaces and $(V,\CV)\in \Omega(E)$ holds ($a$ and $\tau$ are thus fixed parameters). Theorem \ref{aaaaakdskdsjkkjds} additionally contains  continuity estimates, as well as compatibility conditions that can be used, e.g., to construct extensions of maps by gluing together local extensions. This is demonstrated in Example \ref{sdkjdskjkjdskjsdkdsdsds} for the unit ball in a real pre-Hilbert space. In Application \ref{aaaaakdskdsjkkjdssd} in Sect.\ \ref{kjkjkjkjkjksdds}, we carry over the extension result (in the form stated above) to quadrants, which is of relevance in the context of (infinite-dimensional) manifolds with corners \cite{MARG}. Specifically, given $k\in \NN$ and $(V,\CV)\in \Omega(E)$, we construct an extension operator for $C^k$-maps $(a_1,0)\times{\dots}\times (a_n,0) \times V\rightarrow F$ (with $-\infty\leq a_1,\dots,a_n<0$) whose $\ell$-th differential, for $0\leq \ell\leq k$,  extends continuously to $(a_1,0]\times{\dots}\times(a_n,0] \times \CV\times (\RR^n\times E)^\ell$.
 We remark that in the convenient setting (for $k=\infty$ and $V=\CV=E=\{0\}$) the existence of a continuous extension operator was already shown in Proposition 24.10 in \cite{COS}. The proof given there also works in Bastiani's setting, but still only for $k =\infty$ as the exponential law for smooth mappings is explicitly applied.\footnote{We refer to \cite{AS} for subtleties concerning the exponential law in the non-smooth category.} 

We finally want to emphasize that our extension result can also be used to extend $C^k$-maps on subsets in infinite dimensions that admit a  certain kind of geometry. Indeed, we have already mentioned that Example \ref{sdkjdskjkjdskjsdkdsdsds} covers the (real) pre-Hilbert unit ball. In Application \ref{poxpocxpocxpocxlkdsldslds} in Sect.\ \ref{udsiiuiudsidsdsdsds}, we consider subsets of Hausdorff locally convex vector spaces that are defined by a particular kind of distance function (e.g. non-zero $C^k$-seminorms). 
The (real) pre-Hilbert unit ball is an example for this, but the construction in Example \ref{sdkjdskjkjdskjsdkdsdsds} differs from the construction in Application \ref{poxpocxpocxpocxlkdsldslds} that gets along without explicit use of the compatibility property admitted by the  extension operators.   

A brief outline of the paper is as follows. In Sect.\ \ref{Preliminaries}, we fix the notations, recall Bastiani's differential calculus, and provide some elementary facts and definitions concerning locally convex vector spaces (and maps) that we shall need in the main text. In Sect.\ \ref{kjdskjdskjsd}, we state our main result, Theorem \ref{aaaaakdskdsjkkjds}, and discuss various applications to it. Sect.\  \ref{lkdslkjdslkdslkdslkdsdsds} is dedicated to the proof of Theorem \ref{aaaaakdskdsjkkjds}.

\section{Preliminaries}
\label{Preliminaries}
Let $\HLCV$ denote the class of Hausdorff locally convex vector spaces, and let $E\in \HLCV$ be given. We denote the completion of $E$ by $\comp{E}\in \HLCV$.  The system of continuous seminorms on $E$ 
is denoted by $\Sem{E}$. For $\pp\in \Sem{E}$, we let $\hat{\pp}$ denote the continuous extension of $\pp$ to $\comp{E}$. For a subset $V\subseteq E$, we let $\clos{V}\subseteq E$ denote the closure of $V$ in $E$. A subset $\bounded\subseteq E$ is said to be bounded if $\sup\{\pp(X)\:|\: X\in \bounded\}<\infty$ holds for each $\pp\in \Sem{E}$. 
Let $-\infty\leq a < b\leq \infty$ be given: 
\begingroup
\setlength{\leftmargini}{12pt}
{
\begin{itemize}
\item
For $a=-\infty$,\:\:\hspace{37pt} we set $[a,b]:=(-\infty,b]$\:\:\:\quad and\quad $[a,b):=(-\infty,b)$.
\item
For $b=\hspace{9.6pt}\infty$,\:\:\hspace{36.9pt} we set $[a,b]:=[a,\infty)$\hspace{7.6pt}\:\:\:\quad and\quad $(a,b]:=(a,\infty)$.
\item
For $a=-\infty$, $b=\infty$,\:\: we set $[a,b]:=(-\infty,\infty)$.  
\end{itemize}}
\endgroup
\noindent
Let $k\in \NN\cup \{\infty\}$ be given. We write $0\leq \ell\llleq k$,
\begingroup
\setlength{\leftmargini}{12pt}
{
\begin{itemize}
\item
for $k\in \NN$ \:\:\hspace{0.6pt} if\: $\NN\ni \ell\leq k$\:\: holds,
\item
for $k=\infty$\:\: if\:\: $\ell\in \NN$\hspace{23pt} holds. 
\end{itemize}}
\endgroup
\subsection{Bastiani's Differential Calculus}
\label{oioisidsoidsoioidsoidsoids}
In this section, we recall Bastiani's differential calculus, see also  \cite{HA,HG,MIL,KHN,KHN2,KHNM}.  
Let $E,F\in \HLCV$ be given.  
A map $f\colon U\rightarrow F$, with $U\subseteq E$ open, is said to be  
differentiable if
\begin{align*}
	\textstyle(D_v f)(x):=\lim_{t\rightarrow 0}1/t\cdot (f(x+t\cdot v)-f(x))\in F
\end{align*} 
exists for each $x\in U$ and $v\in E$. The map $f$ is said to be $k$-times differentiable for $k\geq 1$ if 
	\begin{align*}
	D_{v_k,\dots,v_1}f:= D_{v_k}(D_{v_{k-1}}( {\dots} (D_{v_1}(f))\dots))\colon U\rightarrow F
\end{align*}
is defined for all $v_1,\dots,v_k\in E$. Implicitly, this means that $f$ is $p$-times differentiable for each $1\leq p\leq k$, and we set
\begin{align*}
	\dd^p_xf(v_1,\dots,v_p)\equiv \dd^p f(x,v_1,\dots,v_p):=D_{v_p,\dots,v_1}f(x)\qquad\quad\forall\: x\in U,\:v_1,\dots,v_p\in E
\end{align*} 	
for $p=1,\dots,k$. We furthermore define $\dd f:= \dd^1 f$, as well as $\dd_x f:= \dd^1_x f$ for each $x\in U$.  
The map $f\colon U\rightarrow F$ is said to be  
\begingroup
\setlength{\leftmargini}{12pt}
\begin{itemize}
\item
of class $C^0$ if it is continuous. In this case, we define $\dd^0 f:= f$.
\item
of class $C^k$ for $k\geq 1$ if it is $k$-times differentiable, such that 
\begin{align*}
	\dd^p f\colon U\times E^p\rightarrow F,\qquad (x,v_1,\dots,v_p)\mapsto D_{v_p,\dots,v_1}f(x)
\end{align*} 
is continuous for $p=0,\dots,k$.  
In this case, $\dd^p_x f$ is symmetric and $p$-multilinear for each $x\in U$ and $p=1,\dots,k$, see \cite{HG}.  
\item
of class $C^\infty$ if it is of class $C^k$ for each $k\in \NN$. 
\end{itemize}
\endgroup
\begin{remark}
\label{banachchchch}
Let $E,F$ be normed spaces. We define $L^0(E,F):=F$, and let $L^\ell(E,F)$, for $\ell \geq 1$, denote the space of all continuous $\ell$-multilinear maps $E^\ell\rightarrow F$ equipped with the operator topology.\footnote{The notations here are adapted to the notations used in (Appendix A.2 and A.3 in) \cite{WALTER}, where the relationships between Bastiani's differentiability concept and Fr\'{e}chet differentiability are presented in detail.} For $k\in \NN$ and $U\subseteq E$ non-empty open, we denote the set of all $k$-times Fr\'{e}chet differentiable maps $U\rightarrow F$ by $\mathcal{F}C^k(U,F)$. Given $f\in \mathcal{F}C^k(U,F)$, we denote its  $\ell$-th Fr\'{e}chet differential, for $0\leq \ell\llleq k$, by $D^{(\ell)}f\colon U \rightarrow L^\ell(E,F)$. We recall that $C^{k+1}(U,F)\subseteq \mathcal{F}C^k(U,F)\subseteq C^k(U,F)$ holds \cite{KHN,WALTER}, with 
\begin{align*}
	D^{(\ell)} f(x)=\dd^\ell_x f\qquad\quad\forall\: x\in U,\: 0\leq \ell\llleq k.
\end{align*}
In particular, we have $C^\infty(U,F)=\mathcal{F}C^\infty(U,F)$.
\hspace*{\fill}\qed
\end{remark}
\noindent
We have the following differentiation rules \cite{HG}.
\begin{proposition}
\label{iuiuiuiuuzuzuztztttrtrtr}
\noindent

\vspace{-6pt}
\begingroup
\setlength{\leftmargini}{17pt}
{
\renewcommand{\theenumi}{{\alph{enumi}})} 
\renewcommand{\labelenumi}{\theenumi}
\begin{enumerate}
\item
\label{iterated}
A map $f\colon E\supseteq U\rightarrow F$ is of class $C^k$ for $k\geq 1$ if and only if $\dd f$ is of class $C^{k-1}$ when considered as a map $E \times E \supseteq U\times E \rightarrow F$.
\item
\label{linear}
Let $f\colon E\rightarrow F$ be linear and continuous. Then, $f$ is smooth, with $\dd^1_xf=f$ for each $x\in E$, as well as $\dd^pf=0$ for each $p\geq 2$.  
\item
\label{speccombo}
Let $F_1,\dots,F_m$ be Hausdorff locally convex vector spaces, and let 
$f_q\colon E\supseteq U\rightarrow F_q$ be of class $C^k$ for $k\geq 1$ and $q=1,\dots,m$. Then, 
\begin{align*}
	f:=f_1\times{\dots}\times f_m \colon U\rightarrow F_1\times{\dots}\times F_m,\qquad x\mapsto (f_1(x),\dots,f_m(x))
\end{align*}
is of class $C^k$, with $\dd^p f=\dd^pf_1{\times} \dots\times \dd^pf_m$ for $p=1,\dots,k$.
\item
\label{chainrule}
	Let $F,\bar{F},\bar{\bar{F}}\in \HLCV$, $1\leq k\leq \infty$, as well as  
	 $f\colon F\supseteq U\rightarrow \bar{U}\subseteq \bar{F}$ and $\bar{f}\colon \bar{F}\supseteq \bar{U}\rightarrow \bar{\bar{U}}\subseteq \bar{\bar{F}}$ be of class $C^k$. Then, $\bar{f}\cp f\colon U\rightarrow \bar{\bar{F}}$ is of class $C^k$, with 
	\begin{align*}
		\dd_x(\bar{f}\cp f)=\dd_{f(x)}\bar{f}\cp \dd_x f\qquad\quad \forall\: x\in U.
	\end{align*}
	\vspace{-18pt}
\item
\label{productrule}
	Let $F_1,\dots,F_m,E\in \HLCV$, and $f\colon F_1\times {\dots} \times F_m\supseteq U\rightarrow E$ be of class $C^0$. Then, $f$ is of class $C^1$ if and only if for each $p=1,\dots,m$, the partial derivative
	\begin{align*}
		\partial_p f \colon U\times F_p\ni((x_1,\dots,x_m),v_p)&\textstyle\mapsto \lim_{t\rightarrow 0} 1/t\cdot (f(x_1,\dots, x_p+t\cdot v_p,\dots,x_m)-f(x_1,\dots,x_m))
	\end{align*}
	exists in $E$ and is continuous. In this case, we have
	\begin{align*}
		\textstyle\dd f((x_1,\dots,x_m),v_1,\dots,v_m)&\textstyle=\sum_{p=1}^m\partial_p f((x_1,\dots,x_m),v_p)\\
		\big(\!&\textstyle= \sum_{p=1}^m \hspace{4.5pt}\dd f((x_1,\dots,x_m),(0,\dots,0, v_p,0,\dots,0))\he\big)
	\end{align*}
	for each $(x_1,\dots,x_m)\in U$, and $v_p\in F_p$ for $p=1,\dots,m$.
\end{enumerate}}
\endgroup
\end{proposition}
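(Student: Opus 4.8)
The plan is to isolate one analytic tool --- a fundamental theorem of calculus for $C^1$-maps --- and to obtain everything else from it by algebra and induction. Concretely, for $f$ of class $C^1$ and $x,v$ with the segment $x+[0,1]\cdot v$ contained in the domain, the curve $t\mapsto \dd f(x+t\cdot v,v)$ is continuous into $F$, so its Riemann integral exists in the completion $\comp{F}$ and one verifies
\[
	f(x+v)-f(x)=\int_0^1 \dd f(x+t\cdot v,v)\,\dd t .
\]
The first payoff is that $v\mapsto \dd_x f(v)$ is linear: homogeneity is immediate from the difference quotient, and additivity follows by applying this identity and using continuity of $\dd f$ to pass to the limit. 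This already supplies the multilinearity of $\dd_x f$ at first order.

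With this tool in hand, (b) and (c) are essentially formal. For continuous linear $f$ the difference quotient collapses to $f(v)$ for every $x$, so $\dd f(x,v)=f(v)$ is continuous and independent of $x$; hence $\dd^2 f=0$ and, inductively, $\dd^p f=0$ for $p\ge 2$. For (c), convergence in a finite product is coordinatewise, so $D_v(f_1\times\dots\times f_m)=(D_v f_1,\dots,D_v f_m)$; continuity of the product differential follows from the universal property of the product, and the statement for all $p$ is obtained by iterating. Part (e) uses the same integral: the forward implication is trivial, since each $\partial_p f$ is $\dd f$ evaluated on a coordinate direction, while for the converse one expands the increment along $(v_1,\dots,v_m)$ as a telescoping sum over the coordinate slots and applies the fundamental theorem in each slot; continuity of the $\partial_p f$ then yields existence and continuity of $\dd f$ together with the additivity formula.

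The structural heart is (a), which I would establish first at the bottom level. Writing $g:=\dd f$ and using linearity of $\dd_x f$ in the second slot, a direct computation of the difference quotient of $g$ at $(x,v)$ in direction $(y,w)$ gives
\[
	\dd g\big((x,v),(y,w)\big)=\dd^2 f(x,v,y)+\dd f(x,w),
\]
so that $\dd^2 f(x,v,y)=\dd g\big((x,v),(y,0)\big)$ and $\dd f(x,w)=\dd g\big((x,0),(0,w)\big)$. Hence $\dd g$ is continuous exactly when $\dd^2 f$ is (given that $\dd f$ already is), and this pattern propagates: an induction on $p$ shows that $\dd^p g$ is a finite sum of terms built from $\dd^{p+1}f$ and lower differentials of $f$, each $\dd^q f$ being recoverable from $\dd^p g$ by specializing the increments. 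Consequently $\dd f$ is of class $C^{k-1}$ if and only if $\dd^p f$ is continuous for all $0\le p\le k$, which is the assertion of (a).

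Finally, (d) is handled by induction on $k$ with (a) carrying the inductive step. The base case $k=1$ is the genuine analytic input: applying the fundamental theorem to $\bar f$ along the curve $t\mapsto f(x+t\cdot v)$, dividing by $t$, and using continuity of $\dd\bar f$ to pass to the limit yields $\dd_x(\bar f\cp f)=\dd_{f(x)}\bar f\cp\dd_x f$, whose continuity is then read off from continuity of $\dd f$ and $\dd\bar f$ and composition of continuous maps. For the step, the map $(x,v)\mapsto \dd_{f(x)}\bar f\big(\dd_x f(v)\big)$ is, by (c) and the induction hypothesis applied to the $C^{k-1}$-data, a composite of $C^{k-1}$-maps, hence of class $C^{k-1}$; by (a) this makes $\bar f\cp f$ of class $C^k$, and the same bootstrap upgrades (c) to all orders. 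The main obstacle I anticipate is twofold: making the fundamental theorem rigorous when $F$ is not complete (the integral lives a priori in $\comp{F}$, and one must argue that the relevant values return to $F$), and keeping the inductions in (a) and (d) non-circular --- for which the clean device is to prove (a) purely structurally from the first-order theory before invoking it in the inductive step of the chain rule.
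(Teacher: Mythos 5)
The paper offers no proof of this proposition at all: it is imported as a package of known differentiation rules with a bare citation to Gl\"ockner \cite{HG}, so there is no in-paper argument to compare yours against. Your sketch is, in substance, a correct reconstruction of the standard proofs from that literature, and the architecture is the right one: establish the fundamental theorem of calculus for $C^1$-maps first, deduce linearity of $\dd_x f$ from it, read off (b), (c) and (e) essentially algebraically, prove (a) via the identity $\dd g((x,v),(y,w))=\dd^2 f(x,v,y)+\dd f(x,w)$ for $g=\dd f$ (with $\dd^{p}g$ a finite sum of terms in $\dd^{q}f$, $q\leq p+1$, each recoverable by specializing increments), and then run the chain rule by induction on $k$ with (a) and (c) carrying the inductive step. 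The two subtleties you flag are genuine but resolve exactly as you suggest: the Riemann integral a priori lands in $\comp{F}$, but in each application it is equated with a difference quotient or a difference of values that already lies in $F$, and since $F$ carries the subspace topology of $\comp{F}$, convergence in the completion to an element of $F$ is convergence in $F$; and the circularity worry disappears because (a) at order $k$ only needs the first-order theory plus the order-$(k{-}1)$ statements, while the inductive step of (d) only invokes the chain rule for $C^{k-1}$-maps (including the smoothness of the continuous linear projections from (b)). The one point worth writing out with more care than your sketch gives it is additivity of $v\mapsto\dd_xf(v)$: it needs the telescoping-plus-integral argument with homogeneity applied inside the integrand before passing to the limit, not merely ``continuity of $\dd f$''; but this is the standard mean-value argument and presents no obstacle. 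In short: your proposal is sound and matches the route taken in the reference the paper delegates to, rather than any argument contained in the paper itself.
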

\noindent
We observe the following.
\begin{corollary}
\label{kjdsjkdsjkkjdsjkds}
	Let $F,\bar{F},\bar{\bar{F}}\in \HLCV$, $1\leq k\leq \infty$, as well as  
	 $f\colon F\supseteq U\rightarrow \bar{U}\subseteq \bar{F}$ and $\bar{f}\colon \bar{F}\supseteq \bar{U}\rightarrow \bar{\bar{U}}\subseteq \bar{\bar{F}}$ be of class $C^k$. Then, for $1\leq \ell\llleq k$ we have
\begin{align*}
		\dd^\ell (\bar{f}\cp f)(x,v_1,\dots,v_\ell)= \dd^\ell \bar{f}(f(x),\dd f(x,v_1),\dots,\dd f(x,v_\ell)) + \Lambda_f(x,v_1,\dots,v_\ell),
\end{align*}
where $\Lambda_f\colon U\times F^\ell\rightarrow \bar{\bar{F}}$ is given as a linear combination of maps of the form
\begin{align*}
 U\times F^\ell\ni (x,v_1,\dots,v_\ell) \mapsto \dd^q \bar{f}(f(x), \dd^{p_1} f(x,v_1,\dots,v_{p_1}),\dots, \dd^{p_q} f(x,v_{\ell-p_q+1},\dots,v_\ell))\in \bar{\bar{F}}
\end{align*}
such that the following conditions are fulfilled:
\begingroup
\setlength{\leftmargini}{12pt}
{
\begin{itemize}
\item
We have $1\leq q< \ell$, as well as $p_1,\dots,p_q\geq 1$ with $p_1+{\dots}+p_q=\ell$.
\item
If $\ell\geq 2$ holds, then we have $p_i\geq 2$ for some $1\leq i\leq q$.   
\end{itemize}}
\endgroup
\end{corollary}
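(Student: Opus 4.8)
The plan is to argue by induction on $\ell$, with the chain rule from Proposition \ref{iuiuiuiuuzuzuztztttrtrtr} (part d) serving as the base case $\ell=1$. For $\ell=1$ that proposition gives $\dd(\bar f\cp f)(x,v_1)=\dd\bar f(f(x),\dd f(x,v_1))$, which is exactly the asserted formula with $\Lambda_f=0$; this is consistent with the stated constraints, since $1\leq q<1$ is vacuous. For the inductive step I would use the defining relation $\dd^{\ell+1}g(x,v_1,\dots,v_{\ell+1})=D_{v_{\ell+1}}[\dd^\ell g(\cdot,v_1,\dots,v_\ell)](x)$, applied to $g:=\bar f\cp f$, and differentiate the right-hand side of the already-established order-$\ell$ formula once more in the direction $v_{\ell+1}$.

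The core computation is to differentiate the leading term. I would write it as $\dd^\ell\bar f\cp h$ with $h(x):=(f(x),\dd f(x,v_1),\dots,\dd f(x,v_\ell))$, which is differentiable by Proposition \ref{iuiuiuiuuzuzuztztttrtrtr} (parts a and c), and then apply the chain rule (part d) together with the product rule (part e). The directional derivative $D_{v_{\ell+1}}h(x)$ has base-point component $\dd f(x,v_{\ell+1})$ and $i$-th directional component $\dd^2 f(x,v_{\ell+1},v_i)$, using symmetry of $\dd^2 f$. Since $\dd^\ell\bar f$ is linear in each of its $\ell$ directional slots (so its partial derivative there is computed by part b), whereas the partial derivative in the base point raises the order to $\dd^{\ell+1}\bar f$, the product rule yields $D_{v_{\ell+1}}(\dd^\ell\bar f\cp h)(x)$ as $\dd^{\ell+1}\bar f(f(x),\dd f(x,v_{\ell+1}),\dd f(x,v_1),\dots,\dd f(x,v_\ell))$ plus a sum of $\ell$ terms in which exactly one slot $\dd f(x,v_i)$ is replaced by $\dd^2 f(x,v_{\ell+1},v_i)$. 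After reordering arguments via symmetry of $\dd^{\ell+1}\bar f$, the first summand becomes the new leading term (the case $q=\ell+1$ with all $p_i=1$), and the remaining $\ell$ summands are admissible mixed terms with $q=\ell$ and partition $(1,\dots,2,\dots,1)$ of $\ell+1$.

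I would treat the $\Lambda_f$ of the inductive hypothesis in the same way: differentiating a term $\dd^q\bar f(f(x),\dd^{p_1}f(\dots),\dots,\dd^{p_q}f(\dots))$ with $\sum p_i=\ell$ in the direction $v_{\ell+1}$ produces one term that raises the outer order to $\dd^{q+1}\bar f$ (inserting a new slot $\dd f(x,v_{\ell+1})$, giving the partition $(1,p_1,\dots,p_q)$ of $\ell+1$ with $q+1\leq \ell<\ell+1$ parts) together with $q$ terms that raise one inner order $p_j\mapsto p_j+1$ (partition $(p_1,\dots,p_j+1,\dots,p_q)$ of $\ell+1$ with $q<\ell+1$ parts). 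The remaining task is purely combinatorial bookkeeping: I would verify that every term other than the single new leading term satisfies $1\leq q<\ell+1$, $\sum p_i=\ell+1$, and possesses a part $\geq 2$. The last condition holds because every newly created term either contains a second-order factor $\dd^2 f$ or inherits a part $\geq 2$ from a $\Lambda_f$-term of the previous stage, which exists once $\ell\geq 2$ (for $\ell=1$ the hypothesis $\Lambda_f=0$ contributes nothing).

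The main obstacle I anticipate is not any single hard estimate but the careful justification of the slot-wise differentiation: one must confirm that $\dd^\ell\bar f$ and each $\dd^{p_i}f$ are genuinely once more differentiable under the standing assumption $\ell+1\llleq k$, which is precisely what Proposition \ref{iuiuiuiuuzuzuztztttrtrtr} (part a) supplies, and that the multi-argument application of the chain and product rules is legitimate in Bastiani's calculus. Tracking the argument slots and the partition data through this process, while routine, is where all the bookkeeping lives, and it is what makes the explicit linear-combination structure of $\Lambda_f$ — rather than merely its existence — drop out.
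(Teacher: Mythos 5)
Your proof is correct and follows essentially the same route as the paper: the paper also argues by induction on $\ell$ via the chain and product rules of Proposition \ref{iuiuiuiuuzuzuztztttrtrtr}, merely writing out the cases $\ell=1,2$ explicitly and leaving the inductive bookkeeping implicit, which you have carried out in detail. Your tracking of how the partition data $(q,p_1,\dots,p_q)$ evolves under one further differentiation correctly accounts for all stated constraints, including the existence of a part $p_i\geq 2$ for $\ell\geq 2$.
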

\begin{proof}
	For $\ell=1$, the claim is clear from \ref{chainrule} in Proposition \ref{iuiuiuiuuzuzuztztttrtrtr}. Moreover, we obtain from the differentiation rules in Proposition \ref{iuiuiuiuuzuzuztztttrtrtr} that
	\begin{align*}
		\dd^2 (\bar{f}\cp f)(x,v_1,v_2)= \dd^2 \bar{f}(f(x),\dd^1 f(x,v_1),\dd^1 f(x,v_2)) + \dd^1 \bar{f}(f(x),\dd^2 f(x,v_1,v_2))
	\end{align*}
	holds, which proves the claim for $\ell=2$. The rest now follows by induction from Proposition \ref{iuiuiuiuuzuzuztztttrtrtr}. 
\end{proof}
\noindent
Let us finally consider the situation, where $f\equiv\gamma\colon U\equiv I\rightarrow F$ holds for a non-empty open interval $I\subseteq \RR$ (hence, $E\equiv\RR$). It is then not hard to see that $\gamma$ is of class $C^k$ for $k\in \NN_{\geq 1}\cup\{\infty\}$ if and only if $\gamma^{(p)}$, inductively defined by $\gamma^{(0)}:=\gamma$ as well as\footnote{We have $\gamma^{(p)}(t)=\dd^p_t\gamma(1,\dots,1)$ for $t\in I$ and $p=1,\dots,k$.} 
\begin{align*}
	\gamma^{(p)}(t):=\textstyle\lim_{h\rightarrow 0}\frac{1}{h}\cdot (\gamma^{(p-1)}(t+h)-\gamma^{(p-1)}(t))\qquad\quad\forall\: t\in I,\:p=1,\dots,k, 
\end{align*}
exists and is continuous for $0\leq p\llleq k$. If $D\subseteq \RR$ is an arbitrary interval (connected, non-empty and non-singleton), we let $C^k(D,F)$ ($k\in \NN \cup\{\infty\}$) denote the set of all maps $\gamma\colon D\rightarrow F$, such that $\gamma=\wt{\gamma}|_D$ holds for some $\wt{\gamma}\in C^k(I,F)$ with $I\subseteq \RR$ an open interval such that $D\subseteq I$. In this case, we set $\gamma^{(p)}:=\wt{\gamma}^{(p)}|_D$ for each $0\leq p\llleq k$.

\subsection{Locally Convex Vector Spaces}
In this subsection, we collect some elementary statements concerning locally convex vector spaces.
\subsubsection{Product Spaces and Continuous Maps}
Given $F_1,\dots,F_n,F\in \HLCV$,   
the Tychonoff topology on $E:=F_1\times{\dots}\times F_n$ equals the Hausdorff locally convex topology that is generated by the seminorms
\begin{align}
\label{rttrrttrtr}
	\textstyle \maxn[\qq_1,\dots,\qq_n]\colon E\ni (X_1,\dots,X_n)\mapsto \max(\qq_1(X_1),\dots,\qq_n(X_n)),
\end{align}
with $\qq_p\in \Sem{F_p}$ for $p=1,\dots,n$. 
We recall the following statements.
\begin{lemma}
\label{kjkjskjdskds}
For each $\qq\in \Sem{E}$, there exist  
$\qq_p\in \Sem{F_p}$ for $p=1,\dots,n$, with $\qq\leq \maxn[\qq_1,\dots,\qq_n]$.
\end{lemma}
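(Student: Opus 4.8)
The plan is to exploit that, by \eqref{rttrrttrtr}, the product topology on $E = F_1\times{\dots}\times F_n$ is generated by the family of seminorms $\mathcal{S} := \{\maxn[\qq_1,\dots,\qq_n] \:|\: \qq_p\in \Sem{F_p},\ p=1,\dots,n\}$, and to reduce the assertion to the standard fact that a continuous seminorm is dominated, up to a positive constant, by a single member of a \emph{directed} generating family.

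First I would verify that $\mathcal{S}$ is directed under componentwise maximum: given $\maxn[\qq_1,\dots,\qq_n]$ and $\maxn[\qq_1',\dots,\qq_n']$ in $\mathcal{S}$, the maxima $\qq_p'' := \max(\qq_p,\qq_p')$ are again continuous seminorms on $F_p$, so $\maxn[\qq_1'',\dots,\qq_n'']\in \mathcal{S}$ and it dominates both of the given seminorms. Consequently the sets $\{X\in E \:|\: r(X)<\epsilon\}$ with $r\in \mathcal{S}$ and $\epsilon>0$ form a neighborhood basis of $0$ for the topology of $E$ (finite intersections of basic neighborhoods collapse into a single one).

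Next, since $\qq\in \Sem{E}$ is continuous, $\{X\in E\:|\:\qq(X)<1\}$ is an open $0$-neighborhood, so by the previous step there exist $r=\maxn[\qq_1,\dots,\qq_n]\in \mathcal{S}$ and $\epsilon>0$ with $\{r<\epsilon\}\subseteq\{\qq<1\}$. It then remains to turn this inclusion into the pointwise estimate $\qq\leq (1/\epsilon)\cdot r$, which I would obtain by the usual homogeneity argument: for $X$ with $r(X)>0$, the rescaling $\frac{\epsilon}{r(X)+\delta}\cdot X$ lies in $\{r<\epsilon\}$ for every $\delta>0$, whence $\frac{\epsilon}{r(X)+\delta}\cdot\qq(X)<1$, and letting $\delta\downarrow 0$ gives $\qq(X)\leq r(X)/\epsilon$; for $X$ with $r(X)=0$, applying the inclusion to all positive multiples $\lambda\cdot X$ forces $\lambda\cdot \qq(X)<1$ for every $\lambda>0$, hence $\qq(X)=0$. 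Setting $C:=1/\epsilon$ then yields $\qq\leq C\cdot \maxn[\qq_1,\dots,\qq_n]$, as claimed.

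I do not expect a genuine obstacle here. The only point requiring care — and precisely the reason a single seminorm $\maxn[\qq_1,\dots,\qq_n]$ (rather than a finite collection) suffices on the right-hand side — is the directedness of $\mathcal{S}$ under componentwise maximum; the scaling step producing the constant $C$ is entirely routine.
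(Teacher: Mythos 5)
Your proof is correct and amounts to the same underlying argument as the paper's: the paper simply observes that the seminorms \eqref{rttrrttrtr} form a fundamental (directed) system and cites Proposition 22.6 of \cite{MV} applied to $\id_E$, which is exactly the standard fact you prove by hand. Your directedness-plus-scaling argument is a complete and correct unpacking of that citation, so there is nothing to object to.
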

\begin{proof}
Since the seminorms \eqref{rttrrttrtr} form a fundamental system, the claim is clear from Proposition 22.6 in \cite{MV}, when applied to the identity $\id_E$.\footnote{Observe $c\cdot \maxn[\qq_1,\dots,\qq_n]=\maxn[c\cdot \qq_1,\dots,c\cdot \qq_n]$ with $c\cdot \qq_p\in \Sem{F_p}$ for $p=1,\dots,n$, for each $c>0$.}
\end{proof} 
\begin{lemma}
\label{alalskkskaskaskas}
Let $X$ be a topological space, and let $\Phi\colon X\times F_1\times{\dots}\times F_n\rightarrow F$ be continuous, such that $\Phi(x,\cdot)$ 
	is $n$-multilinear for each $x\in X$. Then, for each compact $\compacto\subseteq X$ and each $\pp\in \Sem{F}$, there exist seminorms $\qq_p\in \Sem{F_p}$ for $p=1,\dots,n$, as well as $O\subseteq X$ open with $\compacto\subseteq O$, such that
	\begin{align*}
		(\pp\cp\Phi)(x,X_1,\dots,X_n) \leq \qq_1(X_1)\cdot {\dots}\cdot \qq_n(X_n)\qquad\quad\forall\: x\in O,\: X_1\in F_1,\dots,X_n\in F_n.
	\end{align*} 
\end{lemma}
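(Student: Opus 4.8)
The plan is to establish the estimate locally around each point of $\compacto$ and then to glue the local estimates together by a compactness argument. The starting point is the observation that, by $n$-multilinearity, $\Phi(x,X_1,\dots,X_n)=0$ whenever one of the arguments $X_p$ vanishes; in particular $(\pp\cp\Phi)(x,0,\dots,0)=0$ holds for every $x\in X$. So fix $x_0\in\compacto$. Since $\pp\cp\Phi$ is continuous and vanishes at $(x_0,0,\dots,0)$, the set $\{(x,X_1,\dots,X_n)\,:\,(\pp\cp\Phi)(x,X_1,\dots,X_n)<1\}$ is open and contains this point, hence contains a basic open set $O_{x_0}\times U_1\times\dots\times U_n$, where $O_{x_0}\subseteq X$ is open with $x_0\in O_{x_0}$ and each $U_p$ is a $0$-neighbourhood in $F_p$. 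Since each $U_p$ contains a seminorm ball, after replacing $U_p$ by a smaller set I may assume $U_p=\{X_p\in F_p\,:\,\qq_p(X_p)<\varepsilon\}$ for a single $\varepsilon>0$ and continuous seminorms $\qq_p\in\Sem{F_p}$ (with $\varepsilon$ and the $\qq_p$ depending on $x_0$). This yields
\[
(\pp\cp\Phi)(x,X_1,\dots,X_n)<1\qquad\text{whenever }x\in O_{x_0}\text{ and }\qq_p(X_p)<\varepsilon\text{ for }p=1,\dots,n.
\]

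The second step upgrades this bounded-on-a-neighbourhood statement to the desired product estimate by the usual homogeneity argument. Fix $x\in O_{x_0}$ and $X_1,\dots,X_n$ with $\qq_p(X_p)>0$ for all $p$, and set $t_p:=\varepsilon/(2\qq_p(X_p))>0$. Then $\qq_p(t_pX_p)=\varepsilon/2<\varepsilon$, so the previous estimate applies to $(x,t_1X_1,\dots,t_nX_n)$; by $n$-multilinearity one has $\Phi(x,t_1X_1,\dots,t_nX_n)=(t_1\cdots t_n)\cdot\Phi(x,X_1,\dots,X_n)$, whence
\[
(\pp\cp\Phi)(x,X_1,\dots,X_n)=\tfrac{1}{t_1\cdots t_n}\cdot(\pp\cp\Phi)(x,t_1X_1,\dots,t_nX_n)\leq\prod_{p=1}^n\tfrac{2}{\varepsilon}\cdot\qq_p(X_p).
\]
The degenerate case that $\qq_p(X_p)=0$ for some $p$ is handled by letting the corresponding scaling factor tend to $+\infty$: the left-hand side remains bounded by $1/(t_1\cdots t_n)\to0$, so $(\pp\cp\Phi)(x,X_1,\dots,X_n)=0$, which matches the right-hand side since it then contains a zero factor. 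Absorbing the constant $2/\varepsilon$ into the seminorms by setting $\qq_p^{x_0}:=(2/\varepsilon)\cdot\qq_p\in\Sem{F_p}$, I obtain an open set $O_{x_0}\ni x_0$ together with seminorms $\qq_1^{x_0},\dots,\qq_n^{x_0}$ satisfying $(\pp\cp\Phi)(x,X_1,\dots,X_n)\leq\qq_1^{x_0}(X_1)\cdots\qq_n^{x_0}(X_n)$ for all $x\in O_{x_0}$ and $X_p\in F_p$.

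Finally I would pass from the local estimates to a global one over $\compacto$. The family $\{O_{x_0}\}_{x_0\in\compacto}$ is an open cover of $\compacto$, so by compactness finitely many $O_{x_1},\dots,O_{x_m}$ already cover it; I set $O:=O_{x_1}\cup\dots\cup O_{x_m}$, which is open with $\compacto\subseteq O$, and define $\qq_p:=\max(\qq_p^{x_1},\dots,\qq_p^{x_m})\in\Sem{F_p}$ for $p=1,\dots,n$ (a finite maximum of continuous seminorms is again a continuous seminorm). Given $x\in O$, it lies in some $O_{x_j}$, and because each $\qq_p^{x_j}(X_p)\leq\qq_p(X_p)$ while all factors are nonnegative, the local estimate gives $(\pp\cp\Phi)(x,X_1,\dots,X_n)\leq\prod_{p}\qq_p^{x_j}(X_p)\leq\prod_{p}\qq_p(X_p)$, as claimed.

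I expect the only genuinely delicate points to be the scaling step — in particular the treatment of the degenerate case where some $\qq_p(X_p)$ vanishes — and the observation that the product of seminorms is monotone in each factor precisely because seminorms are nonnegative, which is exactly what makes the concluding gluing via maxima legitimate.
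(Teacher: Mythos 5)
Your proof is correct. The paper itself does not prove this lemma but only refers to Corollary~1 in \cite{RGM}; your argument --- continuity of $\pp\cp\Phi$ at $(x_0,0,\dots,0)$, the rescaling/homogeneity step (including the careful treatment of the case $\qq_p(X_p)=0$), and the gluing over a finite subcover via maxima of seminorms --- is exactly the standard route such a reference takes, and every step checks out.
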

\begin{proof}
See, e.g., Corollary 1 in \cite{RGM}.
\end{proof}

\subsubsection{The Riemann Integral}
Let $\gamma\in C^0([r,r'],F)$ be given. We denote the  Riemann integral\footnote{The Riemann integral can be defined exactly as in the finite-dimensional case; namely, as a limit over Riemann sums. Details can be found, e.g., in Sect.\ 2 in  \cite{COS}.} of $\gamma$ by $\int \gamma(s) \:\dd s\in \comp{F}$, and define
\begin{align*}
	\textstyle\int_a^b \gamma(s)\:\dd s:= \int \gamma|_{[a,b]}(s) \:\dd s\qquad\quad\forall\: r\leq a<b\leq r'.
\end{align*}
The Riemann integral is linear, with
\begin{align*}
	\textstyle\hat{\pp}\big(\int_a^b \gamma(s)\:\dd s\big)\textstyle\leq\int_a^b \pp(\gamma(s))\:\dd s\qquad\quad\forall\: \pp\in \Sem{F},\: r\leq a<b\leq r'.
\end{align*}
It follows that the Riemann integral is $C^0$-continuous, i.e., continuous w.r.t.\ the  seminorms 
\begin{align*}
	\pp_\infty(\gamma):=\sup\{\pp(\gamma(t))\:|\: t\in [a,b]\}\qquad\quad\forall\: \pp\in \Sem{F},\:\gamma\in C^0([a,b],F),\: a<b. 
\end{align*}
For $\gamma\in C^1(I,F)$ ($I\subseteq \RR$ an open interval) and $a<b$ with $[a,b]\subseteq I$, we have by \cite{HG} that
\begin{align}
\label{isdsdoisdiosd}
	\textstyle\gamma(b)-\gamma(a)\textstyle=\int_a^b \gamma^{(1)}(s)\:\dd s.
\end{align}
It is furthermore not hard to see that given $\gamma\in C^0(I,F)$, then for $a<b$ with $[a,b]\subseteq I$ and $\Gamma\colon [a,b]\ni t\mapsto \int_a^t \gamma(s)\:\dd s\in \comp{F}$, we have
\begin{align}
\label{opgfgofppof}
		\Gamma\in C^1([a,b],\comp{F})\qquad\:\:\text{with}\qquad\:\: \Gamma^{(1)}=\gamma|_{[a,b]}.
\end{align}
\subsubsection{Harmonic Subsets and Extensions}
Let $\{0\}\neq H\in \HLCV$, $U\subseteq H$ non-empty open, and $\emptyset\neq A\subseteq U$  closed in $U$ w.r.t.\ the subspace topology on $U$. Then, $A$ is said to be {\bf harmonic} if for each $(x,v)\in A\times (H\setminus \{0\})$, there exists $\delta>0$ as well as $\gamma_\pm\colon [0,1)\rightarrow H$ continuous at $0$  
with $\gamma_\pm(0)=0$, such that\footnote{More precisely, this means $(\he x  + \gamma_+((0,1)) + (0,\delta)\cdot v\he)\:\subseteq\: U\setminus A$ as well as $(\he x  + \gamma_-((0,1)) - (0,\delta)\cdot v\he)\:\subseteq\: U\setminus A$.}
\begin{align}
\label{uifgiugifgjkkju}
	(\he x  + \gamma_\pm((0,1)) \pm (0,\delta)\cdot v\he)\:\subseteq\: U\setminus A.
\end{align} 
\begin{example}[Harmonic Subsets]
\label{exampleharmonicdef}
\noindent

\vspace{-6pt}
\begingroup
\setlength{\leftmargini}{23pt}
{
\renewcommand{\theenumi}{{\roman{enumi})}} 
\renewcommand{\labelenumi}{\theenumi}
\begin{enumerate}
\item
\label{exampleharmonicdef1dodos}
If $A\subseteq U$ is harmonic and $\emptyset \neq B\subseteq A$  closed in $U$, then $B\subseteq U$ is harmonic.
\item
\label{exampleharmonicdef1}
Each nonempty finite subset of $U$ is harmonic.
\vspace{-8pt}
\begin{proof}
If $\emptyset\neq A\subseteq U$ is finite, then $A$ is closed in $U$. For $x\in A$ fixed, there exists $\hh\in \Sem{H}$ with $\B_1(x):=\{y\in H\:|\: \hh(y-x)<1\}\subseteq U$, such that $A\cap (\B_1(x)\setminus\{x\})=\emptyset$ holds.   
For $0\neq v\in H$ fixed, we set $\delta:=\frac{1}{2\max(1,\hh(v))}$ and define $\gamma_\pm\colon [0,1)\ni t \mapsto 0 \in H$. Then, we have 
\begin{align*}
	x+\gamma_\pm \pm \lambda\cdot v\:\:\:\hspace{1.3pt} &\:=\: x \pm  \lambda \cdot v \neq x\\[2pt]
	\hh(x-(x+\gamma_\pm \pm \lambda\cdot v))&\:=\:\lambda\cdot \hh(v)<  \delta\cdot \hh(v)<1 
\end{align*} 
for all $\lambda\in (0,\delta)$, 
hence $(x+\gamma_\pm((0,1))\pm (0,\delta)\cdot v\he) \subseteq \B_1(x)\setminus\{x\}\subseteq U\setminus A$.  
\end{proof}
\vspace{-5pt} 
\item
\label{exampleharmonicdef0}
Let $\tilde{H}:= H\times F$ with $F\in \HLCV$, $\emptyset\neq W\subseteq F$ open, and $\tilde{U}:= U\times W$.  If $A\subseteq U$ is harmonic, then $\tilde{A}:=A\times W\subseteq \tilde{U}$ is harmonic.
\vspace{-8pt}
\begin{proof}
Let $\tilde{x}\equiv (x,z)\in \tilde{A}$ and $\tilde{v}\equiv (v,u)\in \tilde{H}\setminus\{(0,0)\}$ be given.
\vspace{-2pt} 
\begingroup
\setlength{\leftmarginii}{12pt}
{
\begin{itemize}
\item
Let $v\neq 0$. We choose $\delta>0$ and $\gamma_\pm$ as in \eqref{uifgiugifgjkkju}. Shrinking $\delta>0$ if necessary, we can  assume $z+ (-\delta,\delta)\cdot u\subseteq W$ (as $W$ is open). 
We set $\tilde{\gamma}_\pm\colon [0,1)\ni t\mapsto (\gamma_\pm(t),0)\in \tilde{H}$, and obtain 
\begin{align*}
	\tilde{x} + \tilde{\gamma}_\pm(\lambda)\pm \mu\cdot \tilde{v} =\big(\he x+ \gamma_\pm(\lambda)\pm\mu \cdot v,z\pm \mu\cdot u\he\big)\in (U\setminus A)\times W= \tilde{U}\setminus \tilde{A}
\end{align*}
for all $\lambda\in (0,1)$ and $\mu \in (0,\delta)$,  
hence $\tilde{x} + \tilde{\gamma}_\pm((0,1))\pm (0,\delta)\cdot \tilde{v}\subseteq \tilde{U}\setminus \tilde{A}$. 
\item
Let $v=0$. We fix $0\neq w\in H$, and choose $\delta>0$ and $\gamma_\pm$ as in \eqref{uifgiugifgjkkju} for $v\equiv w$ there. 
Shrinking $\delta>0$ if necessary, we can  assume $z+ (-\delta,\delta)\cdot u\subseteq W$ (as $W$ is open). 
We set $\tilde{\gamma}_\pm\colon [0,1)\ni t\mapsto (\gamma_\pm(t)\pm t\cdot \delta \cdot w,0)\in \tilde{H}$, and obtain (observe $\mu\cdot v=0$ and $\lambda\cdot \delta\in (0,\delta
)$ for $\lambda\in (0,1)$)
\begin{align*}
	\tilde{x} + \tilde{\gamma}_\pm(\lambda)\pm \mu\cdot \tilde{v} =\big(\he x+ \gamma_\pm(\lambda) \pm \lambda\cdot \delta\cdot w ,z\pm \mu\cdot u\he\big)
	\in (U\setminus A)\times W= \tilde{U}\setminus \tilde{A}
\end{align*}
for all $\lambda\in (0,1)$ and $\mu \in (0,\delta)$. Hence, we have $\tilde{x} + \tilde{\gamma}_\pm((0,1))\pm (0,\delta)\cdot \tilde{v}\subseteq \tilde{U}\setminus \tilde{A}$.\qedhere
\end{itemize}}
\endgroup
\end{proof}  
\vspace{-5pt}
\item
\label{exampleharmonicdef4}
Let $H=\RR\times E$ for $E\in \HLCV$, $p\in \RR$, as well as $U=\RR\times V$ with $\emptyset\neq V\subseteq E$ open. Then, $\{p\}\times V\subseteq U$ is harmonic. 
\vspace{-8pt}
\begin{proof}
$A:=\{p\}\subseteq \RR$ is harmonic by \ref{exampleharmonicdef1}. The claim thus follows from \ref{exampleharmonicdef0} (with $H,U\equiv \RR$, $F\equiv E$ and $W\equiv V$).
\end{proof} 
\vspace{-5pt} 
\item
\label{exampleharmonicdef2}
   If $p\in (0,\infty)$ and $0\neq \hh\in \Sem{H}$, then $U\cap \hh^{-1}(p)\subseteq U$ is harmonic.
   \vspace{-8pt}
\begin{proof}
$B:=\hh^{-1}(p)$ is closed in $H$ as $\hh$ is continuous, as well as non-empty as $\hh\neq 0$. Hence, $A:= U\cap B$ is closed in $U$.  For $z\in B$ and $w\in H$, the reverse triangle inequality yields 
\begin{align}
\label{oidsoidsoisdoisdoidsoidsdsdsdssddsddsdss}
	|\hh(z- \lambda\cdot(z \pm  w))-\hh(\mp\lambda\cdot w)|&\leq\hh(z - \lambda\cdot z) =(1- \lambda)\cdot \hh(z)=(1- \lambda)\cdot p
\end{align} 
for all $\lambda\in (0,1)$.  Let now $x\in A$ and $0\neq v\in H$ be given:
\begingroup
\setlength{\leftmarginii}{12pt}
{
\begin{itemize}
\item
Let $\hh(v)=0$. Then, \eqref{oidsoidsoisdoisdoidsoidsdsdsdssddsddsdss}  applied to $z=x$ and $w=\mu\cdot v$ for $\mu\in (0,\infty)$ yields 
\begin{align*}
	\hh(x- \lambda\cdot(x \pm \mu\cdot v))\leq (1-\lambda)\cdot p< p\qquad\quad\forall\: \lambda\in (0,1),\:\mu \in (0,\infty), 
\end{align*}
hence $(x -(0,1)\cdot x \pm (0,1)\cdot v) \subseteq H\setminus B$. Since $U$ is open with $x\in U$, there exists $\varepsilon>0$ with $(x - (0,\varepsilon)\cdot x \pm (0,\varepsilon)\cdot v) \subseteq U$, hence $(x - (0,\varepsilon)\cdot x \pm (0,\varepsilon)\cdot v)\subseteq U\setminus B=U\setminus A$. The condition \eqref{uifgiugifgjkkju} thus holds  for $\delta:=\varepsilon$ and $\gamma_\pm\equiv \gamma\colon [0,1)\ni t \mapsto -(t\cdot \varepsilon)\cdot x \in H$. 
\item
Let $\hh(v)>0$. Since $\hh(x)=p>0$ holds, there exists (by continuity) $0<\sigma<\min\big(1,\frac{p}{\hh(v)}\big)$ with 
\vspace{-12pt}
\begin{align*}
	\lambda\cdot \mu \cdot \hh(v)<\hh(x-\lambda\cdot (x\pm \mu\cdot v))\qquad\quad\forall\: 0<\lambda,\mu<\sigma. 
\end{align*}
Then, given $\lambda,\mu \in (0,\sigma)$,   
 \eqref{oidsoidsoisdoisdoidsoidsdsdsdssddsddsdss} applied to $z=x$ and $w=\mu\cdot v$ yields
\begin{align*}
\hh(x- \lambda\cdot(x \pm  \mu\cdot v))\leq \lambda\cdot \mu\cdot \hh(v) +(1- \lambda)\cdot p= p - \lambda\cdot (p-\mu\cdot \hh(v))<p.
\end{align*}
We obtain $(x- (0,\sigma^2)\cdot x \pm  (0,\sigma^2)\cdot v)\subseteq H\setminus B$. 
Since $U$ is open with $x\in U$, there exists $0<\varepsilon<\sigma^2$ with  $(x- (0,\varepsilon)\cdot x \pm  (0,\varepsilon)\cdot v)\subseteq  U$, hence $(x- (0,\varepsilon)\cdot x \pm  (0,\varepsilon)\cdot v)\subseteq  U\setminus B=U\setminus A$. 
The condition \eqref{uifgiugifgjkkju} thus holds  for $\delta:=\varepsilon$ and $\gamma_\pm\equiv\gamma\colon [0,1)\ni t \mapsto -(t\cdot \varepsilon)\cdot x \in H$. 
\qedhere
\end{itemize}}
\endgroup
\end{proof} 
\vspace{-5pt} 
\item
\label{exsddsdsdsampleharmonicdef3}
   If $0\neq \hh\in \Sem{H}$, then $U\cap \hh^{-1}(0)\subseteq U$ is harmonic. 
   \vspace{-8pt}
\begin{proof}
$B:=\hh^{-1}(0)$ is closed in $H$ as $\hh$ is continuous. Hence, $A:= U\cap B$ is closed in $U$. The reverse triangle inequality yields (observe $|\hh(z+w)- \hh(w)|\leq \hh(z)$ for all $z,w\in H$)
\begin{align}
\label{lkdslkdslkdslkdslklkdsslkdlkslkdslksddsaaaa}
	\hh(z + w)=\hh(w)\qquad\quad \forall\: z\in B,\: w\in H.
\end{align}
Since $\hh\neq 0$ holds, there exists some $u\in H\setminus B$. Let now $x\in A$ and $0\neq v\in H$ be given:
\begingroup
\setlength{\leftmarginii}{12pt}
{
\begin{itemize}
\item
Let $\hh(v)>0$. Then, $(x \pm (0,\infty)\cdot v)\subseteq H\setminus B$ holds,  by \eqref{lkdslkdslkdslkdslklkdsslkdlkslkdslksddsaaaa} applied to $z=x$ and $w=\pm\: \mu \cdot v$ for $\mu \in (0,\infty)$. Since $U$ is open with $x\in U$, there exists $\varepsilon>0$ with $(x \pm (0,\varepsilon)\cdot v) \subseteq U$, hence $(x \:\pm\: (0,\varepsilon)\cdot v)\subseteq U\setminus B=U\setminus A$. Condition \eqref{uifgiugifgjkkju} thus holds  for $\delta:=\varepsilon$ and $\gamma_\pm\colon [0,1)\ni t \mapsto 0 \in H$. 
\item
Let $\hh(v)=0$. We obtain for $t,\mu\in (0,\infty)$ that
\begin{align*}
	\hh(x + t \cdot(\pm v + \mu\cdot u) )\stackrel{\eqref{lkdslkdslkdslkdslklkdsslkdlkslkdslksddsaaaa}}{=}\hh(t \cdot (\pm v + \mu\cdot u))= t\cdot \hh(\pm v + \mu\cdot u) \stackrel{\eqref{lkdslkdslkdslkdslklkdsslkdlkslkdslksddsaaaa}}{=} t\cdot \hh(\mu\cdot u)=t\cdot \mu\cdot \hh(u)>0
\end{align*}
holds, 
hence $(x + (0,\infty)\cdot u  \pm (0,\infty)\cdot v)\subseteq H\setminus B$. Since $U$ is open with $x\in U$, there exists $\varepsilon>0$ with $(x +(0,\varepsilon)\cdot u  \pm (0,\varepsilon)\cdot v)\subseteq U\setminus B=U\setminus A$, so that \eqref{uifgiugifgjkkju} holds for $\delta:=\varepsilon$ and $\gamma_\pm\equiv \gamma \colon [0,1)\ni t\mapsto   ( t\cdot \varepsilon) \cdot u \in H$. \qedhere
\end{itemize}}
\endgroup
\end{proof}
\end{enumerate}}
\endgroup
\noindent
Notably, the statement in \ref{exampleharmonicdef4} also follows from \ref{exampleharmonicdef1dodos}, \ref{exampleharmonicdef2},  \ref{exsddsdsdsampleharmonicdef3}:
\vspace{-4pt}
\begin{proof}
	Let $\hh\colon H\ni (x,v)\mapsto |x|\in [0,\infty)$ for $H=\RR\times E$. Then, $0\neq \hh\in \Sem{H}$ holds, with $\hh^{-1}(p)=\{-p,p\}\times E$. Hence, we have  $A:=\{-p,p\}\times V=U\cap \hh^{-1}(p)$ for $U=\RR\times V$, so that \ref{exampleharmonicdef2} ($p\neq 0$) and   \ref{exsddsdsdsampleharmonicdef3} ($p=0$) show that $A\subseteq U$ is harmonic. By \ref{exampleharmonicdef1dodos}, then also $B:= \{p\}\times V\subseteq A$ is harmonic, as non-empty and closed in $U$. 
\end{proof} 
\end{example}
\noindent
We have the following statement.
\begin{lemma}
\label{iuiuiurehjncnmnmvcvcvcvcvc}
Let $H,F\in \HLCV$, $U\subseteq H$ non-empty open, $A\subseteq U$ harmonic, and $S\subseteq H$ a subset with $U\subseteq S$. Let $f\in C^k(U\setminus A,F)$ for $k\in \NN\cup\{\infty\}$ be given. For each $0\leq \ell\llleq k$, let $\Phi^\ell\colon S\times H^\ell \rightarrow F$ be continuous with 
\vspace{-10pt}
\begin{align*}
	\Phi^\ell|_{(U\setminus A)\times H^\ell}=\dd^\ell f.
\end{align*}
Then, we have $\tilde{f}:=\Phi^0|_{U}\in C^k(U,F)$,  with $\dd^\ell \tilde{f}=\Phi^\ell|_{U\times H^\ell}$ for all $0\leq \ell\llleq k$. 
\end{lemma}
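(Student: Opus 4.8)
The plan is to induct on $k$, using part \ref{iterated} of Proposition \ref{iuiuiuiuuzuzuztztttrtrtr} to peel off one derivative at a time, so that the whole statement reduces to its $C^1$-instance together with a bookkeeping argument for the higher differentials. I would first record the elementary but crucial observation that harmonicity forces $U\setminus A$ to be dense in $U$: given $x\in A$, the path $\gamma$ provided by \eqref{uifgiugifgjkkju} (for any fixed direction $v$) satisfies $x+\gamma(s)\in U\setminus A$ for $s\in(0,1]$ and $x+\gamma(s)\to x$ as $s\to 0$. Consequently $(U\setminus A)\times H^\ell$ is dense in $U\times H^\ell$ for each $\ell$; since $\tilde f=\Phi^0|_U$ agrees with $f$ on the open set $U\setminus A$, once we know $\tilde f\in C^k(U,F)$ the identities $\dd^\ell\tilde f=\Phi^\ell|_{U\times H^\ell}$ will follow automatically from continuity of both sides and the Hausdorff property of $F$. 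Thus the entire difficulty is the regularity statement $\tilde f\in C^k(U,F)$.

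The core (and, I expect, the main obstacle) is the $C^1$-case, i.e.\ showing that $\tilde f$ is differentiable on all of $U$ with $\dd\tilde f=\Phi^1|_{U\times H}$. At a point $x\in U\setminus A$ this is immediate, because $U\setminus A$ is open, $\tilde f=f$ nearby, and $\dd f=\Phi^1$ there. For $x\in A$ and a direction $v\in H$, I would fix $\delta>0$ and $\gamma$ from \eqref{uifgiugifgjkkju}, shrinking $\delta$ so that in addition $x+[-\delta,\delta]\cdot v\subseteq U$. For each $s\in(0,1]$ the whole segment $x+\gamma(s)+[-\delta,\delta]\cdot v$ lies in $U\setminus A$, where $f$ is $C^1$ and $\dd f=\Phi^1$; applying the fundamental theorem of calculus \eqref{isdsdoisdiosd} along this segment gives, for $t\in[-\delta,\delta]$,
\begin{align*}
	\textstyle f(x+\gamma(s)+tv)-f(x+\gamma(s))=\int_0^t \Phi^1(x+\gamma(s)+rv,v)\:\dd r.
\end{align*}
Now I would let $s\to 0$: the left-hand side converges to $\tilde f(x+tv)-\tilde f(x)$ by continuity of $\Phi^0$ on $S$ (all points involved lie in $U\subseteq S$), while the map $(s,r)\mapsto\Phi^1(x+\gamma(s)+rv,v)$ is continuous on the compact set $[0,1]\times[-\delta,\delta]$, so its sections $r\mapsto \Phi^1(x+\gamma(s)+rv,v)$ converge uniformly as $s\to 0$ and the $C^0$-continuity of the Riemann integral yields convergence of the integrals. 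Hence $\tilde f(x+tv)-\tilde f(x)=\int_0^t\Phi^1(x+rv,v)\,\dd r$ for $|t|\leq\delta$, and \eqref{opgfgofppof} (applied to the continuous integrand $r\mapsto\Phi^1(x+rv,v)$) shows that the difference quotient in $t$ converges, as $t\to 0$, to $\Phi^1(x,v)$. Thus $D_v\tilde f(x)=\Phi^1(x,v)$ for every $x\in U$ and $v\in H$, and since $\Phi^1$ is continuous we get $\tilde f\in C^1(U,F)$ with $\dd\tilde f=\Phi^1|_{U\times H}$.

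For the inductive step (finite $k\geq 2$), I would apply the lemma itself to the map $g:=\dd f\in C^{k-1}((U\setminus A)\times H,F)$ (using \ref{iterated} of Proposition \ref{iuiuiuiuuzuzuztztttrtrtr}), with ambient open set $U\times H\subseteq H\times H$, distinguished subset $A\times H=(U\times H)\setminus((U\setminus A)\times H)$, and $S\times H$ in the role of $S$. Here $A\times H$ is again harmonic: for $((x,w),(v,u))$ the path $s\mapsto(\gamma(s),0)$ and the same $\delta$ work, since $(x,w)+(\gamma(s),0)+r\cdot(v,u)=(x+\gamma(s)+rv,\,w+ru)$ has first component in $U\setminus A$. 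The required continuous extensions of $\dd^\ell g$ are built from the given $\Phi$'s through the (symmetry-based) identity
\begin{align*}
	\dd^\ell g\big((x,w),(v_1,u_1),\dots,(v_\ell,u_\ell)\big)=\dd^{\ell+1}f(x,v_1,\dots,v_\ell,w)+\textstyle\sum_{i=1}^\ell \dd^\ell f(x,v_1,\dots,\widehat{v_i},\dots,v_\ell,u_i),
\end{align*}
replacing each $\dd^{\ell+1}f$ and $\dd^\ell f$ on the right by $\Phi^{\ell+1}$ and $\Phi^\ell$ (available because $\ell+1\llleq k$ whenever $\ell\llleq k-1$); the resulting maps are continuous on $(S\times H)\times(H\times H)^\ell$ and restrict to $\dd^\ell g$ over $(U\setminus A)\times H$. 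The induction hypothesis then gives that $\dd\tilde f=\Phi^1|_{U\times H}$ (the $\ell=0$ extension for $g$) lies in $C^{k-1}(U\times H,F)$, whence $\tilde f\in C^k(U,F)$ by \ref{iterated} of Proposition \ref{iuiuiuiuuzuzuztztttrtrtr}. The base case $k=0$ is just continuity of $\Phi^0$, and the case $k=\infty$ follows since $C^\infty=\bigcap_{m}C^m$ and the differential identities hold at every finite order. Finally, the identities $\dd^\ell\tilde f=\Phi^\ell|_{U\times H^\ell}$ are read off from the density remark of the first paragraph.
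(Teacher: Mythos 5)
Your proof is correct, and its analytic core coincides with the paper's: both hinge on integrating the candidate derivative along the segments $x+\gamma(s)+[0,t]\cdot v$ supplied by harmonicity, passing to the limit $s\to 0$ via uniform convergence on the compact parameter set and the $C^0$-continuity of the Riemann integral, and then invoking \eqref{opgfgofppof} to identify the directional derivative. Where you genuinely differ is the induction scheme. The paper runs a single induction on the order $q$ of the differential and applies this FTC argument directly to $\Phi^q(\cdot,v_1,\dots,v_q)$ and $\Phi^{q+1}(\cdot,v_1,\dots,v_q,v)$, with $v_1,\dots,v_q$ carried along as spectators; this establishes the limit identity \eqref{kjdkjfdkjfdfdfd} at every order and needs nothing beyond the hypotheses as stated. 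You instead run the FTC argument only once, at order zero, and bootstrap by an induction on $k$ that applies the lemma to $g=\dd f$ on the enlarged domain $U\times H$. This costs two extra verifications --- that $A\times H$ is harmonic in $U\times H$ (your path $s\mapsto(\gamma(s),0)$ with the same $\delta$ does work), and the identity expressing $\dd^\ell(\dd f)$ through $\dd^{\ell+1}f$ and $\dd^\ell f$ so that continuous extensions of $\dd^\ell g$ can be assembled from the given $\Phi$'s --- both of which you carry out correctly (the mixed terms with two or more $u_i$-slots vanish by linearity of $\dd_x f$, and Proposition \ref{iuiuiuiuuzuzuztztttrtrtr}.\ref{iterated} then closes the loop). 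Your route isolates the delicate limit argument in the $C^1$ case and makes the higher-order regularity formal; the paper's route is shorter because no auxiliary harmonic set or differential identity for $\dd f$ is needed. Your density observation, which delivers the identities $\dd^\ell\tilde f=\Phi^\ell|_{U\times H^\ell}$ at the end, is also sound (the paper obtains them for free from its induction).
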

\begin{proof}
	By definition, we have $\tilde{f}\in C^0(U,F)$ with $\dd^0 \tilde{f}=\tilde{f}=\Phi^0|_{U}$. We thus can assume that there exists $0\leq q<k$, such that $\tilde{f}$ is of class $C^q$ with $\dd^\ell \tilde{f}=\Phi^\ell|_{U\times H^\ell}$ for all $0\leq \ell\leq q$. The claim then follows by induction once we have shown that\footnote{Due to the assumptions, \eqref{kjdkjfdkjfdfdfd} holds for all $x\in U\setminus A$.} 
\begin{align}
\label{kjdkjfdkjfdfdfd}
\begin{split}
\textstyle\lim_{h\rightarrow 0} \frac{1}{h} \cdot (\Phi^q(x+h\cdot v,v_1,\dots,v_q)-\Phi^q(x,v_1,\dots,v_q))=
\Phi^{q+1}(x,v_1,\dots,v_{q},v)
\end{split}
\end{align}	
holds for all $x\in A$ and $v_1,\dots,v_{q},v\in H$. To show \eqref{kjdkjfdkjfdfdfd}, we choose $\delta>0$ and $\gamma_\pm\colon [0,1)\rightarrow H$ as in \eqref{uifgiugifgjkkju}, and consider the maps 
\begin{align*}
	\alpha_\pm\colon [0,1)\times [0,\delta] \ni (\lambda,s)\mapsto \Phi^{q+1}(x+ \gamma_\pm(\lambda) \pm s\cdot v,v_1,\dots,v_{q},\pm v)\in F.
\end{align*}
\begingroup
\setlength{\leftmargini}{12pt}
\begin{itemize}
\item
	By assumption, we have
\begin{align*}
	\alpha_\pm(\lambda,s)=\dd^{q+1}f(x+\gamma_\pm(\lambda) \pm s\cdot v,v_1,\dots,v_{q},\pm v)\qquad\quad\forall\: \lambda\in (0,1),\: s\in (0,\delta).  
\end{align*}
\vspace{-20pt}
\item
	By compactness and continuity, we have $\lim_{\lambda\rightarrow 0} \pp_\infty(\alpha_\pm(\lambda,\cdot)-\alpha_\pm(0,\cdot))=0$ for each $\pp\in \Sem{F}$. 
\end{itemize}
\endgroup
\noindent
Since the Riemann integral is $C^0$-continuous (used in the second step), and since $\Phi^q$ is continuous (used in the last step),  we obtain for $0<h< \delta$ that (in the fourth step we apply \eqref{isdsdoisdiosd} as well as  Proposition \ref{iuiuiuiuuzuzuztztttrtrtr}.\ref{chainrule}) 
\begin{align*}
	\textstyle  \pm\int_0^{h} \Phi^{q+1}(x&\pm s\cdot v,v_1,\dots,v_{q},v)\:\dd s\\[1pt]
	&\textstyle= \textstyle \int_0^h \alpha_\pm(0,s)\:\dd s\\
	&\textstyle= \lim_{0<\lambda\rightarrow 0}\:\textstyle\int_0^h \alpha_\pm(\lambda,s)\:\dd s\\[1pt]
&	=\textstyle\lim_{0<\lambda\rightarrow 0}\:\int_{0}^{h} \dd^{q+1}f(x+\gamma_\pm(\lambda) \pm s\cdot  v ,v_1,\dots,v_{q},\pm v) \:\dd s\\[4pt]
&	=\textstyle \lim_{0<\lambda\rightarrow 0}\:  \dd^q f(x+\gamma_\pm(\lambda) \pm h\cdot  v ,v_1,\dots,v_q)- \textstyle\lim_{0<\lambda\rightarrow 0} \dd^q f(x+\gamma_\pm(\lambda),v_1,\dots,v_q)\\[3pt]
&	=\textstyle \lim_{0<\lambda\rightarrow 0}\hspace{7.1pt} \Phi^q(x+\gamma_\pm(\lambda) \pm h\cdot v ,v_1,\dots,v_q)- \textstyle\lim_{0<\lambda\rightarrow 0}\hspace{4.75pt} \Phi^q(x+\gamma_\pm(\lambda),v_1,\dots,v_q)\\[3pt]
&	=\hspace{53.1pt}\textstyle \Phi^q(x\pm h\cdot v,v_1,\dots,v_q)\hspace{41.1pt}- \hspace{51pt}\Phi^q(x,v_1,\dots,v_q)
\end{align*}	
holds.   
Together with \eqref{opgfgofppof} this implies \eqref{kjdkjfdkjfdfdfd}.
\end{proof}

\subsection{Particular Mapping Spaces}
\label{jsdjsldldsdsds}
Let $H,F\in \HLCV$ and $k\in \NN\cup\{\infty\}$ be given.   
Let $\Pairr(H)$ denote the set of all pairs $(U,\CU)$ that consist of a non-empty open subset $U\subseteq H$, and a subset $\CU\subseteq \clos{U}$ with $U\subseteq \CU$. Let $\mathcal{C}_{\CU}^k(U,F)$ denote the set of all $f\in C^k(U,F)$, such that $\dd^\ell f$ extends for $0\leq \ell\llleq k$ to a continuous map $\ext{f}{\ell}\colon  \CU\times H^\ell \rightarrow F$.
\begin{remark}
\label{jfdjkjkjfdkjfd}
 Let $1\leq \ell\llleq k$, $(U,\CU)\in \Omega(H)$, and $f\in \mathcal{C}_{\CU}^k(U,F)$ be given. By continuity, the map $\ext{f}{\ell}(z,\cdot)\colon H^\ell\rightarrow F$ is necessarily $\ell$-multilinear and symmetric for each fixed $z\in \CU$. Thus, given $\pp\in \Sem{F}$ and $\compacto\subseteq \CU$ compact, Lemma \ref{alalskkskaskaskas} provides $\hh\in \Sem{H}$ as well as $O\subseteq H$ open with $\compacto\subseteq O$, such that
 \vspace{-6pt}
\begin{align*}
	(\pp\cp\ext{f}{\ell})(z,\uw)\leq  \hh(w_1)\cdot {\dots}\cdot \hh(w_\ell)
\end{align*}
\vspace{-15pt}

\noindent
holds for all $z\in O\cap \CU$ and $\uw=(w_1,\dots,w_\ell)\in H^\ell$.
\hspace*{\fill}\qed
\end{remark}
\noindent
We have the following corollary to Lemma \ref{iuiuiurehjncnmnmvcvcvcvcvc}.
\begin{corollary}
\label{lkddfjjfddflkjfdkldkfd}
Let $H,F\in \HLCV$, $(U,\CU)\in \Omega(H)$, $A\subseteq U$ harmonic, and  $f\in C^k(U\setminus A,F)$ for $k\in \NN\cup\{\infty\}$ be given. 
For each $0\leq\ell \llleq k$, let $\Phi^\ell\colon \CU\times H^\ell \rightarrow F$ be continuous with  
\begin{align*}
	\Phi^\ell|_{(U\setminus A)\times H^\ell}=\dd^\ell f. 
\end{align*}
Then, we have $\tilde{f}:=\Phi^0|_{U}\in \mathcal{C}^k_\CU(U,F)$, with $\ext{\tilde{f}}{\ell}=\Phi^\ell$ for all $0\leq \ell\llleq k$. 
\end{corollary}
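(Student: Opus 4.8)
The plan is to apply Lemma \ref{iuiuiurehjncnmnmvcvcvcvcvc} with the choice $S:=\CU$, and then to upgrade its conclusion to membership in $\mathcal{C}^k_\CU(U,F)$ by a short density argument. This is the natural route, since the corollary is stated immediately after the lemma and the two differ only in that the lemma allows an arbitrary overset $S\supseteq U$ and concludes merely $\dd^\ell\tilde f=\Phi^\ell|_{U\times H^\ell}$, whereas here $S=\CU$ carries the extra structure $\CU\subseteq \clos{U}$, which will let me identify the named extension.

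First I would check that the hypotheses of Lemma \ref{iuiuiurehjncnmnmvcvcvcvcvc} are met with $S:=\CU$. Since $(U,\CU)\in \Omega(H)$, the set $\CU\subseteq H$ satisfies $U\subseteq \CU$, so $S:=\CU$ is an admissible overset; the harmonicity of $A$, the assumption $f\in C^k(U\setminus A,F)$, and the continuity of the maps $\Phi^\ell\colon \CU\times H^\ell\to F$ with $\Phi^\ell|_{(U\setminus A)\times H^\ell}=\dd^\ell f$ are exactly the remaining requirements. The lemma then delivers $\tilde f:=\Phi^0|_U\in C^k(U,F)$ together with $\dd^\ell \tilde f=\Phi^\ell|_{U\times H^\ell}$ for all $0\leq \ell\llleq k$.

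It remains to promote this to the statement $\tilde f\in \mathcal{C}^k_\CU(U,F)$ with $\ext{\tilde f}{\ell}=\Phi^\ell$, i.e.\ to see that each $\dd^\ell \tilde f$ extends continuously to $\CU\times H^\ell$ and that the extension is precisely $\Phi^\ell$. By construction $\Phi^\ell$ is continuous on $\CU\times H^\ell$ and restricts to $\dd^\ell\tilde f$ on $U\times H^\ell$, so it is certainly a continuous extension. To identify it as \emph{the} extension $\ext{\tilde f}{\ell}$ I would invoke uniqueness: because $(U,\CU)\in\Omega(H)$ gives $\CU\subseteq \clos{U}$, the subset $U$ is dense in $\CU$, whence $U\times H^\ell$ is dense in $\CU\times H^\ell$; as $F$ is Hausdorff, a continuous map on $\CU\times H^\ell$ is determined by its restriction to this dense subset, so $\Phi^\ell$ is the unique continuous extension of $\dd^\ell\tilde f$ and thus equals $\ext{\tilde f}{\ell}$.

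I do not expect a genuine obstacle here: the substantive analytic content — propagating differentiability across the harmonic set $A$ by the Riemann-integral computation — has already been carried out in Lemma \ref{iuiuiurehjncnmnmvcvcvcvcvc}, so the corollary is essentially a transcription of its output into the language of the spaces $\mathcal{C}^k_\CU(U,F)$. The only point requiring a moment of care is the final identification of the continuous extension with $\Phi^\ell$, which rests squarely on the defining density property $\CU\subseteq\clos{U}$ of pairs in $\Omega(H)$ together with the Hausdorff assumption on $F$.
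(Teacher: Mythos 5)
Your proposal is correct and matches the paper's proof, which consists of the single line ``Set $S\equiv \CU$ in Lemma \ref{iuiuiurehjncnmnmvcvcvcvcvc}''; you merely spell out the routine density/uniqueness point (that $\CU\subseteq\clos{U}$ forces $\Phi^\ell$ to be \emph{the} continuous extension $\ext{\tilde f}{\ell}$) which the paper leaves implicit.
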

\begin{proof}
	Set $S\equiv \CU$ in Lemma \ref{iuiuiurehjncnmnmvcvcvcvcvc}.
\end{proof}
\noindent
Corollary \ref{kjdsjkdsjkkjdsjkds} provides the following statement.
\begin{lemma}
\label{lkjfdlkjfdlkjfdlkjfd}
Let $H,\bar{H}, F\in \HLCV$, $O\subseteq H$, $\bar{O}\subseteq \bar{H}$  both non-empty open, and $\psi\in C^k(O,\bar{O})$ be fixed.  
Let $(U,\CU)\in \Omega(H)$ with $\CU\subseteq O$ be given, as well as $(\bar{U},\bar{\CU})\in \Omega(\bar{H})$ with $\psi(U)\subseteq \bar{U}$ and $\psi(\CU)\subseteq \bar{\CU}$. 
Then, for $f\in \mathcal{C}_{\bar{\CU}}^k(\bar{U},F)$ we have $f\cp\psi|_U \in \mathcal{C}_{\CU}^k(U,F)$. Specifically, the following assertions hold:
\begingroup
\setlength{\leftmargini}{18pt}
{
\renewcommand{\theenumi}{{\roman{enumi}})} 
\renewcommand{\labelenumi}{\theenumi}
\begin{enumerate}
\item 
\label{dssdsdsd1}
We have 
$\ext{f\cp\psi|_U}{0}=\ext{f}{0}\cp \psi|_\CU$.  
\item
\label{dssdsdsd2}
For $1\leq \ell\llleq k$, we have
\begin{align*}
		\ext{f\cp \psi|_U}{\ell}(x,v_1,\dots,v_\ell)= \ext{f}{\ell}(\psi(x),\dd \psi(x,v_1),\dots,\dd \psi(x,v_\ell)) + \Lambda_\psi(x,v_1,\dots,v_\ell),
\end{align*}
where $\Lambda_\psi\colon U\times H^\ell\rightarrow F$ is given as a linear combination of maps of the form
\begin{align*}
	(x,v_1,\dots,v_\ell) \mapsto \ext{f}{q}(\psi(x), \dd^{p_1} \psi(x,v_1,\dots,v_{p_1}),\dots, \dd^{p_q} \psi(x,v_{\ell-p_q+1},\dots,v_\ell))
\end{align*}
such that the following conditions are fulfilled:
\begingroup
\setlength{\leftmarginii}{12pt}
{
\begin{itemize}
\item
We have $1\leq q< \ell$, as well as $p_1,\dots,p_q\geq 1$ with $p_1+{\dots}+p_q=\ell$.
\vspace{2pt}
\item
If $\ell\geq 2$ holds, then we have $p_i\geq 2$ for some $1\leq i\leq q$.   
\end{itemize}}
\endgroup
\end{enumerate}}
\endgroup
\end{lemma}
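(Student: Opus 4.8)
The statement I need to prove asserts that, under a $C^k$ change of variables $\psi$ compatible with the boundary data (namely $\psi(\CU)\subseteq\bar\CU$), the precomposition $f\cp\psi|_U$ lies in $\mathcal{C}^k_\CU(U,F)$, together with explicit formulas \ref{dssdsdsd1} and \ref{dssdsdsd2} for the boundary extensions of its differentials. The plan is to exhibit the required continuous extensions $\ext{f\cp\psi|_U}{\ell}\colon\CU\times H^\ell\to F$ directly by writing down candidate maps on all of $\CU\times H^\ell$ (not just on $(U\setminus A)\times H^\ell$) using the Faà di Bruno-type expansion from Corollary \ref{kjdsjkdsjkkjdsjkds}, and then verifying that these candidates are continuous and restrict correctly on $U$.

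First I would treat $\ell=0$: define $\Phi^0:=\ext{f}{0}\cp\psi|_\CU\colon\CU\to F$. This is continuous as a composition of the continuous map $\psi|_\CU$ (the restriction of $\psi\in C^k(O,\bar O)$, which is continuous, and $\CU\subseteq O$) with the continuous extension $\ext{f}{0}\colon\bar\CU\to F$; here I use precisely the hypothesis $\psi(\CU)\subseteq\bar\CU$ so that the composition is well-defined. On $U$ it restricts to $f\cp\psi|_U$, giving \ref{dssdsdsd1}. For $1\leq\ell\llleq k$, I would define $\Phi^\ell\colon\CU\times H^\ell\to F$ by the right-hand side of the formula in \ref{dssdsdsd2}, that is, the principal term $\ext{f}{\ell}(\psi(x),\dd\psi(x,v_1),\dots,\dd\psi(x,v_\ell))$ plus the linear combination $\Lambda_\psi$ of lower-order terms $\ext{f}{q}(\psi(x),\dd^{p_1}\psi(x,\dots),\dots,\dd^{p_q}\psi(x,\dots))$, with the same index constraints as in Corollary \ref{kjdsjkdsjkkjdsjkds}. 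Each such building block is continuous on $\CU\times H^\ell$: the maps $\dd^{p}\psi$ are continuous on $O\times H^{p}\supseteq\CU\times H^p$ since $\psi\in C^k$ and $p\leq\ell\leq k$; the outer factor $\ext{f}{q}$ is continuous on $\bar\CU\times\bar H^{q}$ by definition of $\mathcal{C}^k_{\bar\CU}(\bar U,F)$; and the inner arguments land in $\bar\CU$ (for the base point $\psi(x)$, again by $\psi(\CU)\subseteq\bar\CU$) and in $\bar H$ (for the derivative slots), so the composition is continuous. Hence each $\Phi^\ell$ is continuous on $\CU\times H^\ell$.

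It then remains to identify $\Phi^\ell|_{U\times H^\ell}$ with $\dd^\ell(f\cp\psi|_U)$. This is exactly the content of Corollary \ref{kjdsjkdsjkkjdsjkds} applied on the open set $U$, where $f$ is genuinely $C^k$ (with $\dd^\ell f$ in place of $\ext{f}{\ell}$) and $\psi|_U\in C^k(U,\bar U)$: the corollary gives $\dd^\ell(f\cp\psi|_U)(x,\uw)$ as the same principal term plus the same $\Lambda$-combination, and on $U$ we have $\ext{f}{q}(\psi(x),\cdot)=\dd^q f(\psi(x),\cdot)$ since $\psi(U)\subseteq\bar U$ and $\ext{f}{q}$ extends $\dd^q f$. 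Thus $\Phi^\ell|_{U\times H^\ell}=\dd^\ell(f\cp\psi|_U)$ for every $0\leq\ell\llleq k$. Having produced, for each admissible $\ell$, a continuous map $\Phi^\ell$ on $\CU\times H^\ell$ restricting to the $\ell$-th differential on $U$, the definition of $\mathcal{C}^k_\CU(U,F)$ yields $f\cp\psi|_U\in\mathcal{C}^k_\CU(U,F)$ with $\ext{f\cp\psi|_U}{\ell}=\Phi^\ell$, which is precisely \ref{dssdsdsd1} and \ref{dssdsdsd2}.

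The only delicate point is bookkeeping rather than analysis: I must ensure that every slot into which a continuous extension is fed actually lands in the set where that extension is defined — base points in $\bar\CU$ via $\psi(\CU)\subseteq\bar\CU$, and derivative slots anywhere in $\bar H$, which is automatic since the $\ext{f}{q}(z,\cdot)$ are defined on all of $\bar H^q$. The main obstacle to watch is consistency of the index ranges: one needs $q\leq\ell\llleq k$ and each $p_i\leq\ell\leq k$ so that both $\ext{f}{q}$ and $\dd^{p_i}\psi$ are available at the required order; this is guaranteed by the constraints $1\leq q<\ell$ and $p_1+\dots+p_q=\ell$ inherited verbatim from Corollary \ref{kjdsjkdsjkkjdsjkds}, so no extra differentiability is silently demanded of $f$ or $\psi$.
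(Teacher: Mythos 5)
Your proposal is correct and follows essentially the same route as the paper: the paper's (very terse) proof likewise derives Part i) from continuity, notes $f\cp\psi\in C^k(U,F)$ via the chain rule, and obtains Part ii) from Corollary \ref{kjdsjkdsjkkjdsjkds} together with the continuity of the occurring differentials and their extensions on $\CU\subseteq O$ and $\psi(\CU)\subseteq\bar{\CU}$. Your write-up merely makes explicit the candidate extensions $\Phi^\ell$ and the bookkeeping that the paper leaves implicit.
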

\begin{proof}
Part \ref{dssdsdsd1} is clear from the continuity properties of the involved maps. 
Now, we have $f\cp\psi\in C^k(U,F)$, as $\psi$ is of class $C^k$. Moreover,    $\psi$ is defined on $\CU\subseteq O$ with $\psi(\CU)\subseteq \bar{\CU}$.  Part \ref{dssdsdsd2} is thus clear from Corollary \ref{kjdsjkdsjkkjdsjkds}, as well as from continuity of the occurring  differentials and their extensions.  \qedhere
\end{proof}
\noindent 
For $\compacto\subseteq \CU $ compact, $\boundf\subseteq H$ bounded, $\pp\in \Sem{F}$, $f\in \mathcal{C}_\CU^k(U,F)$, we define  
\begin{align}
\label{dsoidsoioidsoidsoidsoidsseminorms}
\begin{split}
\pp^0_{\compacto}\equiv \pp[0]_{\compacto\times \boundf}(f)&:= \sup\{\pp(\ext{f}{0}(z))\:|\: z \in \compacto\}\\
	\pp[\ell]_{\compacto\times \boundf}(f)&:=  \sup\{\pp(\ext{f}{\ell}(z,\uw))\:|\: z \in \compacto,\: \uw\in \boundf^\ell\}\qquad\quad\forall\: 1\leq \ell\llleq k\\
	\pp^\dind_{\compacto\times \boundf}(f)&:= \max\big(0\leq \ell\leq \dind\:\big|\: \pp[\ell]_{\compacto\times \boundf}(f)\big)\hspace{84.4pt}\forall\: 0\leq \dind\llleq k.
\end{split}
\end{align}
Finally assume $H=P\times E$ with $P,E\in \HLCV$. Then, 
\begin{align*}
	(U,\CU):=(W\times V, \CW\times \CV) \in \Pairr(H)\quad\text{ holds for all }\quad (W,\CW)\in \Pairr(P)\quad\text{ and }\quad (V,\CV)\in \Pairr(E). 
\end{align*}
In the following, we will rather denote 
\begin{align*}
	\mathcal{C}_{\CV}^k(W\times V,F):=\mathcal{C}_{\CU}^k(W\times V,F)
\end{align*}
as it will be clear from the context, which $\CW\subseteq \clos{W}$ has to be assigned to some given $W\subseteq P$.

\section{Statement of the Results}
\label{kjdskjdskjsd}
In this section, we state our main result Theorem \ref{aaaaakdskdsjkkjds}, and discuss several applications. Theorem \ref{aaaaakdskdsjkkjds} is proven in Sect.\ \ref{lkdslkjdslkdslkdslkdsdsds}.

\subsection{Statement of the Main Result}
\label{kjfdkjfjkfdfd}
Let $F\in \HLCV$ and $k\in \NN\cup\{\infty\}$ be fixed. For each $E\in \HLCV$, we set $H[E]:=\RR\times E$, and define\footnote{Observe that, according to our conventions concerning intervals, we have $(a,b]=(a,b)$ if $b=\infty$ holds.}
\begin{align}
\label{lkdslkds}
	\mathcal{C}_{\CV}^k((a,b)\times V,F)&:=\mathcal{C}_{(a,b]\times \CV}^k((a,b)\times V,F)\qquad\quad\forall\: -\infty\leq a < b\leq \infty
\end{align} 
for each $(V,\CV)\in \Pair$.  
For a bounded subset $\bounded\subseteq E$, we set
\begin{align}
\label{dljfdkjfdlkjfdkjd}
	\boundf(\bounded):=   \{(1,0)\}\cup(0\times \bounded)\subseteq \diffspace[E]. 
\end{align} 
Let $R\subseteq \RR$ be a subset, and $\mathcal{W}\subseteq E$ a linear subset.
\begingroup
\setlength{\leftmargini}{12pt}
{
\begin{itemize}
\item
For each $x\in E$ and $\ell\in \NN$, we define  
\begin{align*}
	\mathcal{W}(R,x,\ell)\colon R\times (\RR\times \mathcal{W})^\ell\hookrightarrow H[E]\times H[E]^\ell,\qquad (t,\uw)\mapsto ((t,x),\uw),
\end{align*}
hence $\mathcal{W}(R,x,0)\colon R\ni t\mapsto (t,x)\in H[E]$.
\item
Given $\bar{E}\in \HLCV$, $\bar{x}\in \bar{E}$, $\ell\in \NN$, and a linear map $\Upsilon \colon \mathcal{W}\rightarrow \bar{E}$, we define
\begin{align*}
	\mathcal{W}_\Upsilon(R,\bar{x},\ell)\colon R\times (\RR\times \mathcal{W})^\ell\hookrightarrow H[\bar{E}]\times H[\bar{E}]^\ell,\qquad (t,\uw)\mapsto ((t,\bar{x}),(\id_\RR\times \Upsilon)^\ell(\uw)),
\end{align*}
hence $\mathcal{W}_\Upsilon(R,\bar{x},0)\colon R\ni t\mapsto (t,\bar{x})\in H[\bar{E}]$.
\end{itemize}}
\endgroup
\noindent
Our main result states the following. 
\begin{theorem}
\label{aaaaakdskdsjkkjds}
Let $-\infty\leq a<\tau<b< \infty$ be fixed. There exist linear (extension) maps 
\begin{align*}
	\ExOp_{a,\tau,b}(E,V,\CV)\colon \mathcal{C}_{\CV}^k((a,b)\times V,F) \rightarrow \mathcal{C}_{\CV}^k((a,\infty)\times V,F)
\end{align*}
for $E\in \HLCV$ and $(V,\CV)\in \Pair$, 
such that the following conditions are fulfilled:
\begingroup
\setlength{\leftmargini}{16pt}
{
\renewcommand{\theenumi}{\arabic{enumi})} 
\renewcommand{\labelenumi}{\theenumi}
\begin{enumerate} 
\item
\label{aaaaakdskdsjkkjds2}
For $E\in \HLCV$, $(V,\CV)\in \Omega(E)$, $f\in \mathcal{C}_{\CV}^k((a,b)\times V,F)$, $0\leq \ell\llleq k$, 
we have 
\begin{align*}
	\ext{\ExOp_{a,\tau,b}(E,V,\CV)(f)}{\ell}|_{(a,b]\times \CV\times\diffspace[E]^\ell}\hspace{19pt}&=\ext{f}{\ell}\\
	\ext{\ExOp_{a,\tau,b}(E,V,\CV)(f)}{\ell}|_{[2b-\tau,\infty)\times \CV\times \diffspace[E]^\ell}&=0.
\end{align*}
\item
\label{aaaaakdskdsjkkjds1}
There exist constants $\{C_\dind\}_{0\leq \dind\llleq k}\subseteq [1,\infty)$, such that the following assertions hold for each $E\in \HLCV$, $(V,\CV)\in \Pair$, $t\in (b,\infty)$, $x\in \CV$, $\pp\in \Sem{F}$, and $f\in \mathcal{C}_{\CV}^k((a,b)\times V,F)$:
\begingroup
\setlength{\leftmarginii}{12pt}
{
\begin{itemize}
\item
 	We have\: $\pp(\ext{\ExOp_{a,\tau,b}(E,V,\CV)(f)}{0}(t,x))\leq C_0\cdot \pp^0_{[\tau,b]\times \{x\}}(f)$.
\item
	For $1\leq \ell\leq \dind\llleq k$, $\bounded\subseteq E$ bounded, and $\uw=((\lambda_1, X_1),\dots,(\lambda_\ell, X_\ell))\in (\RR\times \bounded)^\ell$ we have 
	\begin{align*}
	\pp(\ext{\ExOp_{a,\tau,b}(E,V,\CV)(f)}{\ell}((t,x),\uw))\leq C_\dind\cdot \max(1,|\lambda_1|,\dots,|\lambda_\ell|)^\ell \cdot \pp^\dind_{[\tau,b]\times \{x\}\times \boundf(\bounded)}(f).
\end{align*}
\end{itemize}}
\endgroup
\item
\label{aaaaakdskdsjkkjds3}
Let $E,\bar{E}\in \HLCV$, $\mathcal{W}\subseteq E$ a linear subspace, $\Upsilon \colon \mathcal{W}\rightarrow \bar{E}$ a linear map, $(V,\CV)\in \Pairr(E)$, $(\bar{V},\bar{\CV})\in \Pairr(\bar{E})$, as well as
\begin{align*}
	f\in \mathcal{C}^k_{\CV}((a,b)\times V,F),\quad\:\: \bar{f}\in \mathcal{C}_{\bar{\CV}}^k((a,b)\times \bar{V},F),\quad\:\: x\in \CV,\quad\:\: \bar{x}\in \bar{\CV},\quad\:\: 0\leq \dind\llleq k.
\end{align*}
Then, the first line implies the second line:
\begin{align*}
	\ext{f}{\ell}\cp \mathcal{W}([\tau,b],x,\ell)\hspace{7.2pt}&=\ext{\bar{f}}{\ell}\cp \mathcal{W}_\Upsilon([\tau,b],\bar{x},\ell)\qquad\:\:\hspace{1.1pt}\forall\: 0\leq \ell\leq \dind\\[2pt]
	\ext{\ExOp_{a,\tau,b}(E,V,\CV)(f)}{\dind}\cp \mathcal{W}([\tau,\infty),x,\dind)&= \ext{\ExOp_{a,\tau,b}(\bar{E},\bar{V},\bar{\CV})(\bar{f})}{\dind}\cp \mathcal{W}_\Upsilon([\tau,\infty),\bar{x},\dind).
\end{align*}
\end{enumerate}}
\endgroup
\end{theorem}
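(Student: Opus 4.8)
The plan is to follow Seeley's reflection construction, carried out in the time variable only, with all scaling data chosen universally (independent of $(E,V,\CV)$) so that the naturality statement becomes almost formal.

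\textbf{Analytic heart (Seeley coefficients).} First I would fix, once and for all, a sequence $1\leq\mu_0<\mu_1<\cdots$ with $\mu_n\to\infty$, real coefficients $(a_n)_{n\in\NN}$, and a cutoff $\phi\in C^\infty(\RR,\RR)$ with $\phi\equiv 1$ on a neighbourhood of $0$ and $\phi(s)=0$ for $s\geq b-\tau$, subject to the moment identities $\sum_n a_n(-\mu_n)^\ell=1$ for all $0\leq\ell\llleq k$ and the summability $\sum_n|a_n|\mu_n^\ell<\infty$ for all $0\leq\ell\llleq k$. Existence is Seeley's lemma (an infinite Vandermonde system; for finite $k$ only finitely many moments are required). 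Setting the reflection $r_n(t):=b-\mu_n(t-b)$, one checks that $r_n(t)\in[\tau,b)$ whenever $t>b$ and $\phi(\mu_n(t-b))\neq 0$, and that for $t\geq 2b-\tau$ every factor $\phi(\mu_n(t-b))$ vanishes (since $\mu_n\geq 1$).

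\textbf{Definition and termwise regularity.} For $f\in\mathcal{C}_{\CV}^k((a,b)\times V,F)$ I define $\ExOp_{a,\tau,b}(E,V,\CV)(f):=f$ on $(a,b)\times V$ and $:=\sum_n a_n\,\phi(\mu_n(t-b))\,f(r_n(t),x)$ on $[b,\infty)\times V$; linearity is immediate. Each summand is $\phi(\mu_n(\cdot))\cdot(f\cp\psi_n)$ with $\psi_n(t,x)=(r_n(t),x)$ affine and smooth, so by Lemma \ref{lkjfdlkjfdlkjfdlkjfd} together with the product rule each summand lies in $\mathcal{C}^k$ on $(b-\epsilon,\infty)\times V$ with extended differentials expressed through $\ext{f}{\ell}$ (the chain rule scales the $\RR$-slot of each increment by $-\mu_n$, Leibniz attaches $\phi$-derivatives). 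Using the multilinear seminorm bounds of Remark \ref{jfdjkjkjfdkjfd} and the summability $\sum_n|a_n|\mu_n^\ell<\infty$, the series and all its termwise differentials converge uniformly on $\compacto\times\bounded^\ell$ for compact $\compacto\subseteq[\tau,\infty)\times\CV$; this yields continuous maps $\Phi^\ell$ on $(a,\infty)\times\CV\times\diffspace[E]^\ell$ that agree with $\dd^\ell\ExOp(f)$ on each open piece.

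\textbf{Crossing the seam $t=b$ (the main obstacle).} This is where the genuine work lies. The hyperplane $A:=\{b\}\times V$ has codimension one and is therefore \emph{not} harmonic, so Corollary \ref{lkddfjjfddflkjfdkldkfd} cannot be invoked directly to glue the two pieces. Tangential increments $(0,X)$ cause no trouble: moving slightly into $t>b$ avoids $A$, and the argument of Lemma \ref{iuiuiurehjncnmnmvcvcvcvcvc} applies. The transverse ($t$-)direction is governed by the moment conditions: at $t=b$ the $\phi$-derivatives vanish (as $\phi\equiv 1$ near $0$) and $\sum_n a_n(-\mu_n)^\ell=1$ forces the one-sided $t$-derivatives of the sum to coincide with $\ext{f}{\ell}$ at $t=b$. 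I would make this rigorous through the integral representation \eqref{isdsdoisdiosd}--\eqref{opgfgofppof}, showing $\Phi^q(t,x,\dots)-\Phi^q(b,x,\dots)=\int_b^t\Phi^{q+1}(\cdots)\,\dd s$ across the seam and concluding inductively (exactly as in the proof of Lemma \ref{iuiuiurehjncnmnmvcvcvcvcvc}) that $\ExOp(f)\in\mathcal{C}^k_{\CV}((a,\infty)\times V,F)$ with $\ext{\ExOp(f)}{\ell}=\Phi^\ell$. The first assertion of the theorem then follows at once, since $\Phi^\ell=\ext{f}{\ell}$ on $(a,b]$ by the $\ell$-th moment identity and $\Phi^\ell=0$ on $[2b-\tau,\infty)$ by the support of $\phi$.

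\textbf{Estimates and naturality.} The continuity estimates are read off the series: writing each increment as $(\lambda_i,X_i)=\lambda_i(1,0)+(0,X_i)$ and using multilinearity, the chain-rule factor $-\mu_n$ attaches only to the $\RR$-slot, so each differential is dominated by $\big(\sum_n|a_n|\mu_n^{\leq\ell}\big)$ times cutoff-suprema, $\max(1,|\lambda_1|,\dots,|\lambda_\ell|)^\ell$, and $\pp^{\dind}_{[\tau,b]\times\{x\}\times\boundf(\bounded)}(f)$ — which is precisely why the test set $\boundf(\bounded)=\{(1,0)\}\cup(\{0\}\times\bounded)$ is the correct one, and the constants $C_\dind$ absorb $\sum_n|a_n|\mu_n^{\dind}$ and the combinatorial and $\phi$-derivative factors. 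Finally, the compatibility statement is essentially formal: the data $a_n,\mu_n,\phi$ are universal and the reflection acts on $t$ alone, carrying $[\tau,\infty)$-time into $[\tau,b]$-time; hence, if $\ext{f}{\ell}$ and $\ext{\bar f}{\ell}$ agree along $\mathcal{W}([\tau,b],x,\ell)=\mathcal{W}_\Upsilon([\tau,b],\bar x,\ell)$, the chain-rule expressions for $\ext{\ExOp(f)}{\dind}$ and $\ext{\ExOp(\bar f)}{\dind}$ are assembled from the same universal ingredients and therefore agree along $\mathcal{W}([\tau,\infty),x,\dind)=\mathcal{W}_\Upsilon([\tau,\infty),\bar x,\dind)$; I would verify this term by term using the chain-rule formula of Lemma \ref{lkjfdlkjfdlkjfdlkjfd}.
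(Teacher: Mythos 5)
Your proposal is correct and follows essentially the same route as the paper: the same Seeley reflection series with universal coefficients and a universal cutoff keeping the reflected argument in $(\tau,b)$, termwise continuous extensions of the differentials, the moment and summability conditions to obtain continuity of these extensions at the seam, a one-sided integral-representation argument (the substance of Lemma \ref{iuiuiurehjncnmnmvcvcvcvcvc}) to get $C^k$-regularity across it, the decomposition of increments into $(1,0)$- and $(0,X)$-components for the estimates with test set $\boundf(\bounded)$, and naturality read off termwise from the universality of the data. Your side remark that $\{b\}\times V$ fails the paper's literal harmonicity condition in directions transverse to the seam is well taken (the paper nonetheless routes the gluing through Corollary \ref{dfkjkdfkjdfkjdf} with $A=\{0\}\times V$), but the direct one-sided integral argument you propose is exactly what the proof of that corollary amounts to, so the two treatments coincide in substance.
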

\begin{remark}
\label{sdsdpoooooooooooooooooooooooooooooooooodssd}
The extension operator in Theorem \ref{aaaaakdskdsjkkjds} and the constants  $\{C_\dind\}_{0\leq \dind\llleq k}$ in Part \ref{aaaaakdskdsjkkjds1}, only depend on the choice of some fixed $\varrho\in C^\infty(\RR,\RR)$ with 
\begin{align*}
	|\varrho|\leq 1,\qquad\varrho|_{(-\infty,\tau]}=0,\qquad\varrho|_{[\upsilon,b]}=1
\end{align*}  
for some  $\tau<\upsilon < b$. 
 Specifically, see \eqref{lksdlklkdsdsdsds} for the case $a=-\infty$ and $b=0$ as well  as \eqref{kjdskjdskjdsds} for an ad hoc definition of the extension $\tilde{f}\in \mathcal{C}_{\CV}^k((-\infty,\infty)\times V,F)$ of some given $f\in \mathcal{C}_{\CV}^k((-\infty,0)\times V,F)$. See also  \eqref{kjdskjfdiufdiufiufduiuifdodfio} and  \eqref{sdpopodspodsopdsopdspods98ds98ds98sd98sd98ds98s} for the definition of the constants $\{C_\dind\}_{0\leq \dind\llleq k}$ via the constants \eqref{kjskjdsjkjdskkjdskjdskjds}, i.e. 
\begin{align*}
	M_p= \sup\Big\{\he\big|\he\varrho^{(j)}(t)\he\big|\:\:\Big|\: \:t\in [\tau,0],\: 0\leq j\leq p\:\Big\}\qquad\quad\forall\:  p\in \NN.
\end{align*}
\end{remark}

\begin{remark}
\label{jhsdjhdsjhdj}
Let $E,F$ be normed spaces, and recall the definitions made in Remark \ref{banachchchch}. Given $(V,\CV)\in \Omega(E)$, $a<b$ and $k\in \NN$, let $\mathcal{FC}^k_\CV((a,b)\times V,F)$ denote the set of all $f\in \mathcal{F}C^k((a,b)\times V, F)$, such that $D^{(\ell)}f$ extends for  $0\leq \ell\llleq k$ to a continuous map $\mathcal{F}\ext{f}{\ell}\colon (a,b]\times \CV \rightarrow L^\ell(E,F)$. Seeley already mentioned in \cite{SEELEY} that his construction also works for smooth $\RR$-valued functions defined on half Banach spaces. Expectably, the same holds true for the construction made in Sect.\ \ref{lkdslkjdslkdslkdslkdsdsds}, then leading to extension operators 
\begin{align*}
	\mathcal{F}\ExOp_{a,\tau,b}(E,V,\CV)\colon  \mathcal{FC}^k_\CV((a,b)\times V,F) \rightarrow \mathcal{FC}^k_\CV((a,\infty)\times V,F) 
\end{align*}
for $a<\tau<b$, $(V,\CV)\in \Omega(E)$, $k\in \NN$ that admit properties analogous to that in Theorem \ref{aaaaakdskdsjkkjds}. We will not provide the details in this paper, but mention that Theorem \ref{aaaaakdskdsjkkjds} together with Remark \ref{banachchchch} already provides the extension operators\footnote{It is straightforward from Remark \ref{banachchchch} that $\mathcal{FC}^\infty_\CV((a,b)\times V,F)\subseteq \mathcal{C}^\infty_\CV((a,b)\times V,F)$ holds for each $(V,\CV)\in \Omega(E)$, i.e., in particular for $\CV=V$.}
\begin{align*}
	\ExOp_{a,\tau,b}(E,V,V)|_{\mathcal{FC}^\infty_V((a,b)\times V,F)}\colon \mathcal{FC}^\infty_V((a,b)\times V,F) \rightarrow C^\infty((a,\infty)\times V,F)=\mathcal{F}C^\infty((a,\infty)\times V,F),
\end{align*}
for $V\subseteq E$ non-empty open and $a<\tau<b$. 
\hspace*{\fill}\qed
\end{remark}

\begin{remark}
\label{fdkjkdkjfdkjlfdkfccxx}
The second point in Theorem \ref{aaaaakdskdsjkkjds} shows that the extension operators constructed admit considerable continuity properties. Seeley already mentioned in \cite{SEELEY} that his extension operator is continuous in many functional topologies. Expectably, the same holds true for their infinite-dimensional counterparts. However, it would go far beyond the scope of this article to investigate all possible continuity properties of the extension operators provided here -- they have to be extracted on demand from the explicit construction performed in Sect.\ \ref{lkdslkjdslkdslkdslkdsdsds}. At this point, we only want to emphasize the following:
\begingroup
\setlength{\leftmargini}{12pt}
{
\begin{itemize}
\item
The second estimate in Theorem \ref{aaaaakdskdsjkkjds}.\ref{aaaaakdskdsjkkjds1} can be sharpened if $\lambda_j=0$ holds for $j=1,\dots,\ell$. Specifically, on the right side of this estimate, the set $\boundf(\bounded)$ then can just be replaced by $\{0\}\times \bounded$.
\item
Let $0\leq \dind\llleq k$, $x\in \CV$, $f\in \mathcal{C}_{\CV}^k((a,b)\times V,F)$ be given. Then, Theorem \ref{aaaaakdskdsjkkjds}.\ref{aaaaakdskdsjkkjds1} shows 
\vspace{-3pt}
\begin{align*}
	\ext{f}{\ell}|_{[\tau,b]\times \{x\}\times H[E]^\ell}\hspace{5.5pt}&=0\qquad\quad \forall\: 0\leq \ell\leq\dind\\
	\Longrightarrow\qquad\ext{\ExOp_{a,\tau,b}(E,V,\CV)(f)}{\ell}|_{[\tau,\infty)\times \{x\}\times H[E]^\ell}&=0\qquad\quad \forall\: 0\leq \ell\leq\dind.\qquad\quad 
\end{align*}
\vspace{-18pt}
\item
Let $f,g\in \mathcal{C}_{\CV}^k((a,b)\times V,F)$ be given, such that 
\begin{align*}
	\compact:=\clos{\{z\in (a,b)\times \CV\:|\: \ext{f}{0}(z)\neq \ext{g}{0}(z)\}}
\end{align*}
is compact.  Then, $\compact\subseteq [c,b]\times \compacto$ holds for certain $-\infty<c\leq b$ as well as $\compacto\subseteq \CV$ compact. Then, $\wt{\compact}:=\compact  \cup ([b,2b-\tau]\times \compacto)$ is compact, and  the parts \ref{aaaaakdskdsjkkjds2} and  \ref{aaaaakdskdsjkkjds3} of Theorem \ref{aaaaakdskdsjkkjds} imply
\begin{align*}
	\clos{\{z\in (a,b)\times V\:|\: \ext{\ExOp_{a,\tau,b}(E,V,\CV)(f)}{0}(z)\neq \ext{\ExOp_{a,\tau,b}(E,V,\CV)(g)}{0}(z) \}}\subseteq \wt{\compact}.
\end{align*}
This might be of relevance, e.g., in the context of spaces of smooth mappings $f\colon M\rightarrow N$ between manifolds $M,N$ ($N$ possibly infinite-dimensional), where the $\mathscr{D}$-topology \cite{MICHM} (called very strong topology in \cite{HJS}) is refined to the $\mathscr{FD}$-topology \cite{MICH} (called fine very strong topology in \cite{HJS}) by additionally considering the classes defined by the equivalence relation
\begin{align*}
	f\sim g\qquad\Longleftrightarrow\qquad \clos{\{x\in M\:|\: f(x)\neq g(x)\}}\subseteq M\quad\text{is compact}
\end{align*}
with $f,g\in C^\infty(M,N)$.\hspace*{\fill}\qed
\end{itemize}}
\endgroup
\end{remark}
\noindent
We close this subsection with the following summarizing corollary to Lemma \ref{kjkjskjdskds} and Remark \ref{jfdjkjkjfdkjfd} that we shall need for our estimates in Sect.\ \ref{kjdskjsdkjdsds}.
\begin{corollary}
\label{kndskjdskjsdcxcx}
Let $E\in \HLCV$, $(V,\CV)\in \Omega(E)$, $-\infty\leq a<c<d \leq b<\infty$, $f\in \mathcal{C}_{\CV}^k((a,b)\times V,F)$, $\pp\in \Sem{F}$, $1\leq \ell\llleq k$, and   $\compacto\subseteq\CV$ be compact. There exist $\wt{C}_\ell\geq 1$, $\qq\in \Sem{E}$, and $U\subseteq E$ open with $\compacto\subseteq U$, such that\he\footnote{See \eqref{rttrrttrtr} for the Definition of the seminorms on the right side.}   
\begin{align*}
	(\pp\cp \ext{f}{\ell})(z,\uw)\leq \wt{C}_\ell\cdot \maxn[ |\cdot|,\qq](w_1)\cdot {\dots}\cdot \maxn[|\cdot|,\qq](w_\ell)
\end{align*}
holds for all $z\in [c,d]\times (U\cap \CV)$ and $\uw=(w_1,\dots,w_\ell)\in H[E]^\ell$.
\end{corollary}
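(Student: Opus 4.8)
The plan is to obtain the estimate by feeding a suitable compact set into Remark~\ref{jfdjkjkjfdkjfd}, dominating the resulting seminorm on $H[E]=\RR\times E$ by a max-seminorm via Lemma~\ref{kjkjskjdskds}, and finally shrinking the open neighbourhood produced by the remark to a product tube $[c,d]\times U$ by a standard compactness argument. Concretely, recall from \eqref{lkdslkds} that $f\in\mathcal{C}_{\CV}^k((a,b)\times V,F)$ means $f\in\mathcal{C}_{\CU}^k((a,b)\times V,F)$ with $\CU:=(a,b]\times\CV\subseteq H[E]$. Since $a<c<d\leq b$, we have $[c,d]\subseteq(a,b]$, so that $\compacto':=[c,d]\times\compacto$ is a compact subset of $\CU$.

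First I would apply Remark~\ref{jfdjkjkjfdkjfd} (with $H=H[E]$ and the compact set $\compacto'$) to the given $\pp\in\Sem{F}$. This yields $\hh\in\Sem{H[E]}$ and an open set $O\subseteq H[E]$ with $\compacto'\subseteq O$ such that
\begin{align*}
	\pp(\ext{f}{\ell}(z,\uw))\leq \hh(w_1)\cdot{\dots}\cdot\hh(w_\ell)
\end{align*}
holds for all $z\in O\cap\CU$ and $\uw=(w_1,\dots,w_\ell)\in H[E]^\ell$.

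Next I would put $\hh$ into the required shape. By Lemma~\ref{kjkjskjdskds} there exist $C>0$, a continuous seminorm on the $\RR$-factor (necessarily of the form $\mu\cdot|\cdot|$ with $\mu\geq 0$, since every continuous seminorm on $\RR$ is a nonnegative multiple of $|\cdot|$), and $\qq\in\Sem{E}$ with $\hh\leq C\cdot\maxn[\mu|\cdot|,\qq]$. Writing $\nu:=\max(1,\mu)$ and using $\max(\mu|\lambda|,\qq(X))\leq\nu\cdot\max(|\lambda|,\qq(X))$ for all $(\lambda,X)\in\RR\times E$, we obtain $\hh\leq C\nu\cdot\maxn[|\cdot|,\qq]$; setting $\wt{C}_\ell:=\max(1,(C\nu)^\ell)\geq 1$ then converts the product bound from the first step into one with the seminorm $\maxn[|\cdot|,\qq]$.

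Finally I would replace $O$ by a product neighbourhood. Since $[c,d]$ and $\compacto$ are compact and $[c,d]\times\compacto=\compacto'\subseteq O$ with $O$ open in $\RR\times E$, Wallace's tube lemma provides an open $U\subseteq E$ with $\compacto\subseteq U$ (together with an open $I\supseteq[c,d]$ in $\RR$) such that $I\times U\subseteq O$, whence $[c,d]\times U\subseteq O$. For $z\in[c,d]\times(U\cap\CV)$ we then have simultaneously $z\in O$ and $z\in(a,b]\times\CV=\CU$, so $z\in O\cap\CU$ and the estimate above applies, yielding the assertion. The only genuinely non-formal ingredient is this last tube-lemma step: Remark~\ref{jfdjkjkjfdkjfd} delivers merely a \emph{generic} open neighbourhood of $\compacto'$ in $\RR\times E$, and one must exploit compactness of $[c,d]$ to extract from it a neighbourhood of product form $[c,d]\times U$, so that precisely the domain $[c,d]\times(U\cap\CV)$ demanded in the statement is covered.
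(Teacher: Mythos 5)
Your proposal is correct and follows exactly the paper's own route: Remark \ref{jfdjkjkjfdkjfd} applied to the compact set $[c,d]\times\compacto\subseteq(a,b]\times\CV$, Lemma \ref{kjkjskjdskds} to pass from the generic seminorm $\hh$ on $\RR\times E$ to $\maxn[|\cdot|,\qq]$, and the tube lemma to replace $O$ by $[c,d]\times U$. You merely spell out details (the form of seminorms on $\RR$, the Wallace argument) that the paper compresses into ``By compactness''.
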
 
\begin{proof}
According to\footnote{Additionally observe that for each (continuous) seminorm $\hh$ on $\RR$, we have $\hh(x)=|x|\cdot \hh(1)\leq \max(1,\hh(x))\cdot |x|$ for all $x\in \RR$.} Lemma \ref{kjkjskjdskds} and Remark \ref{jfdjkjkjfdkjfd}, there exist $\wt{C}_\ell\geq 1$, $\qq\in \Sem{E}$, as well as $O\subseteq H[E]$ open with $[c,d]\times \compacto\subseteq O$, such that 
\begin{align}
\label{asasassaasas}
	(\pp\cp \ext{f}{\ell})(z,\uw)\leq \wt{C}_\ell\cdot \maxn[ |\cdot|,\qq](w_1)\cdot {\dots}\cdot \maxn[|\cdot|,\qq](w_\ell)
\end{align}
holds for all $z\in O\cap ((a,b]\times \CV)$ and $\uw=(w_1,\dots,w_\ell)\in H[E]^\ell$. 
By compactness, there exists $U\subseteq E$ open with $\compacto\subseteq U$, such that $[c,d]\times U\subseteq O$ holds. Then,  \eqref{asasassaasas} holds for each $z\in [c,d]\times (U\cap \CV)$ and $\uw=(w_1,\dots,w_\ell)\in H[E]^\ell$, which proves the claim.    
\end{proof}

\subsection{Multiple Variables}
\label{kjkjkjkjkjksdds}
Let $F\in \HLCV$  and $k\in \NN\cup\{\infty\}$ be fixed. For $n\geq 1$ and $E\in \HLCV$, we define $\diffspace[E,n]:=\RR^n\times E$. 
Given $\ul{a}=(a_1,\dots,a_n)$, $\ul{\tau}=(\tau_1,\dots,\tau_n)$, $\ul{b}=(b_1,\dots,b_n)$ with $-\infty\leq a_i<\tau_i<b_i\leq \infty$ for $i=1,\dots,n$, we set  
\begin{align*}
	\cube(\ul{a},\ul{b}):=(a_1,b_1)\times \dots\times (a_n,b_n)\qquad\quad\text{and}\qquad\quad
	\cubec(\ul{a},\ul{b}):=\hspace{1.3pt}(a_1,b_1]\times \dots\times (a_n,b_n].
\end{align*} 
If $b_1,\dots,b_n=\infty$ holds, we also denote $\ul{b}=\ul{\infty}$, and observe that then $\cube(\ul{a},\ul{b})=\cubec(\ul{a},\ul{b})$ holds according to our conventions concerning intervals. For each $(V,\CV)\in \Pair$, we set 
\begin{align*}
	\mathcal{C}_{\CV}^k(\cube(\ul{a},\ul{b})\times V,F):=\mathcal{C}_{\cubec(\ul{a},\ul{b})\times \CV}^k(\cube(\ul{a},\ul{b})\times V,F).
\end{align*} 
Theorem \ref{aaaaakdskdsjkkjds} provides the following statement. 
\begin{application}
\label{aaaaakdskdsjkkjdssd}
Let $n\geq 1$,  $E\in \HLCV$, and $(V,\CV)\in \Pair$. Let $\ul{a}=(a_1,\dots,a_n)$, $\ul{\tau}=(\tau_1,\dots,\tau_n)$, $\ul{b}=(b_1,\dots,b_n)$ be given with $-\infty\leq a_i<\tau_i<b_i< \infty$ for $i=1,\dots,n$. There exists a linear (extension) map 
\begin{align*}
	\ExOp_{\ul{a},\ul{\tau},\ul{b}}(E,V,\CV)\colon \mathcal{C}_{\CV}^k(\cube(\ul{a},\ul{b})\times V,F) \rightarrow \mathcal{C}_{\CV}^k(\cube(\ul{a},\ul{\infty})\times V,F)
\end{align*} 
that admits the following two properties:
\begingroup
\setlength{\leftmargini}{17pt}
{
\renewcommand{\theenumi}{\alph{enumi})} 
\renewcommand{\labelenumi}{\theenumi}
\begin{enumerate} 
\item
\label{aaaaaloeoelloelkdskdsjkkjds2}
For $f\in \mathcal{C}_{\CV}^k(\cube(\ul{a},\ul{b})\times V,F)$ and $0\leq\ell\llleq k$, we have 
\begin{align*}
	\ext{\ExOp_{\ul{a},\ul{\tau},\ul{b}}(E,V,\CV)(f)}{\ell}|_{\cubec(\ul{a},\ul{b})\times \CV\times\diffspace[E,n]^\ell}=\ext{f}{\ell}.
\end{align*}
\vspace{-16pt}
\item
\label{aaaaaloeoelloelkdskdsjkkjds4}
Let $\ul{y}=(y_1,\dots,y_n)\in \cube(\ul{a},\ul{\infty})$ be given, with $y_i\geq 2b_i-\tau_i$ for some $1\leq i\leq n$. Then, 
\begin{align*}
	\ext{\ExOp_{\ul{a},\ul{\tau},\ul{b}}(E,V,\CV)(f)}{\ell}|_{\{\ul{y}\}\times \CV\times H[E,n]^\ell}=0
\end{align*}
holds for each $f\in \mathcal{C}_{\CV}^k(\cube(\ul{a},\ul{b})\times V,F)$ and $0\leq\ell\llleq k$.
\end{enumerate}}
\endgroup
\end{application}
\begin{proof}
According to Theorem \ref{aaaaakdskdsjkkjds}, we can assume that the claim holds for some $n\geq 1$. 
Let thus $-\infty\leq a_0,\dots,a_n,\tau_0,\dots,\tau_n,b_0,\dots,b_n<\infty$ be given with $a_i<\tau_i<b_i$ for $i=0,\dots,n$, and define 
\begin{align*}
	\ul{a}&:=(a_1,\dots,a_{n}),\qquad\ul{a}_0:=(a_0,\dots,a_n)\\
	\ul{\tau}&:=(\tau_1,\dots,\tau_{n}),\qquad\hspace{1.8pt}\ul{\tau}_0:=(\tau_0,\dots,\tau_n)\\
	\ul{b}&:=(b_1,\dots,b_{n}),\qquad\hspace{3.25pt}\ul{b}_0:=(b_0,\dots,b_n).
\end{align*}
Let $E\in \HLCV$ and $(V,\CV)\in \Pair$ be given. We set $\wt{E}:=\RR\times E$, $\wh{E}:=\RR^{n}\times E$, as well as 
\begin{align*}
	(\wt{V},\wt{\CV})&:=\hspace{4pt}((a_{0},b_{0})\times V,\hspace{4.3pt}(a_{0},b_{0}]\hspace{0.9pt}\times \CV)\in \Pairr(\wt{E})\\
	(\wh{V},\wh{\CV})&:=(\cube(\ul{a},\ul{\infty})\times V,\cube(\ul{a},\ul{\infty})\times \CV)\in \Pairr(\wh{E}).
\end{align*}
The induction hypotheses provides the extension operator
\begin{align}
\label{kjdfkjdfjkdfkjdfdfweewqeweqw}
	\ExOp_{\ul{a},\ul{\tau},\ul{b}}(\wt{E},\wt{V},\wt{\CV})\colon \underbracket{\mathcal{C}^k_{\wt{\CV}}(\cube(\ul{a},\ul{b})\times \wt{V},F)}_{\boldsymbol{\cong}\: \mathcal{C}^k_{\CV}(\cube(\ul{a}_0,\ul{b}_0)\times V,F)}\rightarrow \mathcal{C}^k_{\wt{\CV}}(\cube(\ul{a},\ul{\infty})\times \wt{V},F)\boldsymbol{\cong} \mathcal{C}^k_{\wh{\CV}}((a_{0},b_{0})\times \wh{V},F).
\end{align}
Theorem \ref{aaaaakdskdsjkkjds} provides the extension operator 
\begin{align}
\label{kdkjkjfdkfdjkfdnmdnmcnmcxnmcx}
	\ExOp_{a_0,\tau_0,b_0}(\wh{E},\wh{V},\wh{\CV})\colon \mathcal{C}_{\wh{\CV}}^k((a_{0},b_{0})\times \wh{V},F) \rightarrow \mathcal{C}_{\wh{\CV}}^k((a_{0},\infty)\times \wh{V},F)\boldsymbol{\cong} \mathcal{C}_{\CV}^k(\cube(\ul{a}_0,\ul{\infty})\times V,F).
\end{align}
We consider the linear map 
\begin{align*}
	\ExOp_{\ul{a}_0,\ul{\tau}_0,\ul{b}_0}(E,V,\CV):=\ExOp_{a_0,\tau_0,b_0}(\wh{E},\wh{V},\wh{\CV})\cp\ExOp_{\ul{a},\ul{\tau},\ul{b}}(\wt{E},\wt{V},\wt{\CV}),
\end{align*}
so that under the identifications made we have 
\begin{align*}
	\ExOp_{\ul{a}_0,\ul{\tau}_0,\ul{b}_0}(E,V,\CV)
	\colon \mathcal{C}^k_{\CV}(\cube(\ul{a}_0,\ul{b}_0)\times V,F)\rightarrow \mathcal{C}_{\CV}^k(\cube(\ul{a}_0,\ul{\infty})\times V,F).
\end{align*}
Let now $f\in \mathcal{C}^k_{\CV}(\cube(\ul{a}_0,\ul{b}_0)\times V,F)$ and $\ul{y}=(y_0,\dots,y_n)\in \cube(\ul{a}_0,\ul{b}_0)$ be given.  
The induction hypotheses provides the following statements:
\begingroup
\setlength{\leftmargini}{12pt}
\begin{itemize}
\item
Up to the identifications in \eqref{kjdfkjdfjkdfkjdfdfweewqeweqw}, we have 
\begin{align*}
	\ext{\ExOp_{\ul{a},\ul{\tau},\ul{b}}(\wt{E},\wt{V},\wt{\CV})(f)}{\ell}|_{(a_0,b_0]\times (\cubec(\ul{a},\ul{b})\times \CV)\times\diffspace[\wh{E},1]^\ell}&=\ext{f}{\ell}\qquad\quad\forall\: 0\leq\ell\llleq k.
\end{align*}
\vspace{-16pt}
\item
Let $y_i\geq 2b_i-\tau_i$ for some $1\leq i\leq n$, as well as $y_0\in (a_0,b_0)$. Then, up to the identifications in \eqref{kjdfkjdfjkdfkjdfdfweewqeweqw}, we have
\begin{align}
\label{dkjdskjsdkjdsddssddsds}
\ext{\ExOp_{\ul{a},\ul{\tau},\ul{b}}(\wt{E},\wt{V},\wt{\CV})(f)}{\ell}((y_0,((y_1,\dots,y_n),\cdot)),\cdot)=0\qquad\quad\forall\: 0\leq \ell\llleq k.
\end{align}
\vspace{-22pt}
\end{itemize}
\endgroup
\noindent
Theorem \ref{aaaaakdskdsjkkjds}.\ref{aaaaakdskdsjkkjds2} (for $\ExOp_{a_0,\tau_0,b_0}(\wh{E},\wh{V},\wh{\CV})$), provides the following statements:
\begingroup
\setlength{\leftmargini}{12pt}
\begin{itemize}
\item 
For $0\leq\ell\llleq k$, we have (under the identification in  \eqref{kjdfkjdfjkdfkjdfdfweewqeweqw})
\begin{align*}
	\ext{\ExOp_{a_0,\tau_0,b_0}(\wh{E},\wh{V},\wh{\CV})(\ExOp_{\ul{a},\ul{\tau},\ul{b}}(\wt{E},\wt{V},\wt{\CV})(f))}{\ell}|_{(a_0,b_0] \times \wh{\CV}\times\diffspace[\wh{E},1]^\ell}&=\ext{\ExOp_{\ul{a},\ul{\tau},\ul{b}}(\wt{E},\wt{V},\wt{\CV})(f)}{\ell}.
\end{align*}
\vspace{-17pt}
\item
If $y_0\geq 2b_0-\tau_0$ holds, then for $0\leq \ell\llleq k$, we have
\begin{align}
\label{kjkjkdskjdjdssdsd}
\ext{\ExOp_{a_0,\tau_0,b_0}(\wh{E},\wh{V},\wh{\CV})(\ExOp_{\ul{a},\ul{\tau},\ul{b}}(\wt{E},\wt{V},\wt{\CV})(f))}{\ell}((y_0,((y_1,\dots,y_n),\cdot)),\cdot)=0.
\end{align}
\vspace{-24pt}
\noindent
\end{itemize}
\endgroup
\noindent
We obtain for $0\leq \ell\llleq k$ (under the identification in \eqref{kdkjkjfdkfdjkfdnmdnmcnmcxnmcx} in the first step) that
\begin{align*}
	\ext{\ExOp_{\ul{a}_0,\ul{\tau}_0,\ul{b}_0}(&E, V,\CV)(f)}{\ell}|_{\cubec(\ul{a}_0,\ul{b}_0)\times \CV\times \diffspace[E,n+1]^\ell}\\[2pt]
	&\!=\ext{\ExOp_{a_0,\tau_0,b_0}(\wh{E},\wh{V},\wh{\CV})(\ExOp_{\ul{a},\ul{\tau},\ul{b}}(\wt{E},\wt{V},\wt{\CV})(f))}{\ell}|_{(a_0,b_0] \times (\cubec(\ul{a},\ul{b})\times \CV)\times\diffspace[\wh{E},1]^\ell}\\
	&\!=\ext{\ExOp_{\ul{a},\ul{\tau},\ul{b}}(\wt{E},\wt{V},\wt{\CV})(f)}{\ell}|_{(a_0,b_0] \times (\cubec(\ul{a},\ul{b})\times \CV)\times\diffspace[\wh{E},1]^\ell}\\
	&=\! \ext{f}{\ell}
\end{align*}
holds, which proves Part \ref{aaaaaloeoelloelkdskdsjkkjds2}. 
Finally, assume that $y_i\geq 2b_i-\tau_i$ holds for some $1\leq i\leq n$. Then, Theorem \ref{aaaaakdskdsjkkjds}.\ref{aaaaakdskdsjkkjds1} together with \eqref{dkjdskjsdkjdsddssddsds} shows 
\begin{align*}
	\pp(\ext{\ExOp_{a_0,\tau_0,b_0}(\wh{E},\wh{V},\wh{\CV})(\ExOp_{\ul{a},\ul{\tau},\ul{b}}(\wt{E},\wt{V},\wt{\CV})(f))}{\ell}((y_0,((y_1,\dots,y_n),\cdot)),\cdot)\leq 0
\end{align*}
for each $\pp\in \Sem{F}$ and $0\leq \ell\llleq k$. Together with \eqref{kjkjkdskjdjdssdsd}, this proves Part  \ref{aaaaaloeoelloelkdskdsjkkjds4}.
\end{proof}

\begin{remark}
Domains as considered in Application \ref{aaaaakdskdsjkkjdssd} occur, e.g., in the context of manifolds with corners as (open subsets of) quadrants in Hausdorff locally convex vector spaces. Specifically, let $H\in \HLCV$, and $\Lin_1,\dots,\Lin_n\colon H\rightarrow \RR$ with $n\geq 1$ be linearly independent continuous linear maps. Consider the closed subspace $E:=\ker(\Lin_1)\cap\dots\cap \ker(\Lin_n)\subseteq H$, and let $\ul{a}:=(-\infty,\dots,-\infty)$, $\ul{b}:=(0,\dots,0)$ (both $n$-times). Then, we have $H\cong \RR^n\times E$, and the corresponding open and closed quadrants $\mathrm{Q}\subseteq H$ and $\breve{\mathrm{Q}}\subseteq H$, respectively, are given by
\begin{align*}
	 \mathrm{Q}&:=\{X\in H\:|\: \Lin_p(X)< 0\:\:\text{ for }\:\: p=1,\dots,n\}\cong \cube(\ul{a},\ul{b})\times E\\
	 \breve{\mathrm{Q}}&:=\{X\in H\:|\: \Lin_p(X)\leq 0\:\:\text{ for }\:\: p=1,\dots,n\}\cong \cubec(\ul{a},\ul{b})\times E.
\end{align*}
\begin{proof}
Let $e_1,\dots,e_n\in H$ be linearly independent with $\Lin_i(e_j)=\delta_{ij}$ for $1\leq i,j\leq n$, and set $H\supseteq W:=\langle e_1,\dots,e_n\rangle\cong\RR^n$. Then, 
\begin{align*}
	\textstyle\mathcal{P}\colon H\ni X\mapsto \sum_{p=1}^n \Lin_p(X)\cdot e_p\in W
\end{align*}
is a continuous projection operator, with 
$\mathcal{P}(H)\cong \RR^n$, $\mathcal{P}(\mathrm{Q})\cong \cube(\ul{a},\ul{b})$, $\mathcal{P}(\breve{\mathrm{Q}})\cong \cubec(\ul{a},\ul{b})$ as topological spaces. Moreover, the following maps are continuous, linear, and inverse to each other:
\begin{align*}
	\Xi\colon& \hspace{24pt} H\rightarrow W\times E,\qquad
	\hspace{22.68pt} X\mapsto (\mathcal{P}(X),X-\mathcal{P}(X))\\
	 \Xi^{-1}\colon& W\times E\rightarrow H,\qquad\hspace{27.4pt} (Z,Y)\mapsto Z+Y. 
\end{align*}
Since $\Xi(H)\cong \RR^n\times E$, $\Xi(\mathrm{Q})\cong \cube(\ul{a},\ul{b})\times E$, and $\Xi(\breve{\mathrm{Q}})\cong \cubec(\ul{a},\ul{b})\times E$ are homeomorphic, the claim follows. 
\end{proof}
\end{remark}

\subsection{Particular Subsets in Infinite Dimensions}
\label{udsiiuiudsidsdsdsds}
Let $P,E\in \HLCV$, $(V,\CV)\in \Omega(E)$, $0< \tau<1$, $k\in \NN\cup\{\infty\}$ be fixed, and set $H:=P\times E$. Let $\xi\colon P\rightarrow [0,\infty)$ be a continuous map that admits the following properties:
\vspace{-4pt}
\begingroup
\setlength{\leftmargini}{16pt}
{
\renewcommand{\theenumi}{\sf\arabic{enumi})} 
\renewcommand{\labelenumi}{\theenumi}
\begin{enumerate} 
\item
\label{ncnbvcnbnb2}
$\xi$ is of class $C^k$ on $\Zero:= \xi^{-1}((0,\infty))$,
\item
\label{ncnbvcnbnb1}
$\xi(Z)\neq 0$ for some $Z\in P$,
\item
\label{ncnbvcnbnb3}
$\xi(\lambda\cdot Z)=\lambda\cdot \xi(Z)$ for each $\lambda \in [0,\infty)$ and $Z\in P$,
\item
\label{ncnbvcnbnb4}
$\mathcal{A}:=\xi^{-1}(1)\subseteq \Zero$ is harmonic.
\end{enumerate}}
\endgroup
\noindent
We consider the sets:
\begingroup
\setlength{\leftmargini}{12pt}
{
\begin{itemize}
\item
$\hsphere:=\xi^{-1}((0,1))$, 
\item
$\hspherec:=\xi^{-1}((0,1])$, 
\item
$\mathcal{U}:=\xi^{-1}((1,\infty))$,
\item
$\mathcal{T}:= \xi^{-1}([2-\tau,\infty))$, 
\item
$\mathcal{V}:=\xi^{-1}((2-\tau,\infty))$,
\item
$\Wero:= \Zero\setminus \mathcal{A}$,
\item
$A:= \mathcal{A}\times V$.
\end{itemize}}
\endgroup
\noindent
These sets are non-empty by \ref{ncnbvcnbnb1} and  \ref{ncnbvcnbnb3}; and, by continuity of $\xi$, the sets $\Zero,\hsphere,\mathcal{U},\mathcal{V},\Wero$ are open. 
Moreover,  $A\subseteq \Zero\times V$ is harmonic by Example \ref{exampleharmonicdef}.\ref{exampleharmonicdef0}, and the condition \ref{ncnbvcnbnb3} implies: 
\begingroup
\setlength{\leftmargini}{15pt}
{
\begin{itemize}
\item[$-$]
$\hspherec\subseteq \clos{\hsphere}$,\hspace{3pt} hence\: $(\hsphere,\hspherec)\in \Omega(P)$\hspace{1pt}\: and\: $(\hsphere\times V,\hspherec \times \CV)\in \Pairr(H)$,
\item[$-$]
$\mathcal{T}\subseteq \clos{\mathcal{V}}$,
\item[$-$]
\hspace{2pt}$\Wero\subseteq \clos{\Zero}$, hence\: $(\Wero,\Zero)\in \Omega(P)$\: and\: $(\Wero\times V,\Zero \times \CV)\in \Pairr(H)$.
\end{itemize}}
\endgroup 
\noindent 
We define:
\begin{align*}
	\mathcal{C}_{\CV}^k(\hsphere\times V,F)\hspace{3pt}:=&\:\he\mathcal{C}_{\hspherec\times \CV}^k(\hsphere\times V,F),\\ 
	 \mathcal{C}_{\CV}^k(\Zero\times V,F):= &\:\he\mathcal{C}_{\Zero\times \CV}^k(\Zero\times V,F).
\end{align*}
In this subsection, we prove the following statement:
\begin{application}
\label{poxpocxpocxpocxlkdsldslds}
There exists a linear (extension) map 
\begin{align*}
	\ExOp\colon \mathcal{C}_{\CV}^k(\hsphere\times V,F) \rightarrow \mathcal{C}_{\CV}^k(\Zero\times V,F),
\end{align*}
	such that for all $f\in \mathcal{C}_{\CV}^k(\hsphere\times V,F)$ and $0\leq \ell\llleq k$, we have 
\begin{align}
\label{iudsiudsiusiudsiusuisiudsiudsiusdd767676}
	\ext{\ExOp(f)}{\ell}|_{\hspherec\times \CV\times H^\ell}=\ext{f}{\ell}\qquad\quad\text{and}\qquad\quad \ext{\ExOp(f)}{\ell}|_{\mathcal{T}\times \CV\times H^\ell}=0.
\end{align}
\end{application}
\begin{remark}
\label{dsoioidsdsoioidsoidsoidsdsdsdsds}
Application \ref{poxpocxpocxpocxlkdsldslds} holds in the same form if $\hsphere$ is replaced by $\hsphereb:=\xi^{-1}([0,1))$, $\hspherec$ is replaced by $\hspherecb:=\xi^{-1}([0,1])$, and $\Zero$ is replaced by $P$: 
\begin{proof}
We have $(\hsphereb,\hspherecb)\in \Omega(P)$ by continuity of $\xi$ as well as by \ref{ncnbvcnbnb1} and  \ref{ncnbvcnbnb3}. Let now $f\in \mathcal{C}_{\CV}^k(\hsphereb\times V,F)$ be given. Then, $f|_{\hsphere\times V}\in \mathcal{C}_{\CV}^k(\hsphere\times V,F)$ holds by Lemma \ref{lkjfdlkjfdlkjfdlkjfd} (with $O \equiv P$ and $\psi\equiv \id_P$). Hence, we have $\ExOp(f|_{\hsphere\times V})\in \mathcal{C}_{\CV}^k(\Zero\times V,F)$, with $\ExOp$ as in Application \ref{poxpocxpocxpocxlkdsldslds}. We define 
\begin{align*}
	\tilde{\ExOp}(f)\colon P\times V \rightarrow F,\qquad (Z,x)\mapsto 
	\begin{cases}
\ExOp(f|_{\hsphere\times V})(Z,x) &\:\:\text{for}\quad\:\: Z\neq 0\\
f(Z,x) &\:\:\text{for}\quad\:\: Z=0. 
\end{cases}
\end{align*} 
\begingroup
\setlength{\leftmargini}{12pt}
{
\begin{itemize}
\item
By construction, we have 
$\tilde{\ExOp}(f)|_{\mathcal{W}\times V}=\ExOp(f|_{\hsphere\times V})|_{\mathcal{W}\times V}$, as well as 
$\tilde{\ExOp}(f)|_{\hsphereb\times V}=f$ 
\begin{align*}
	\text{by}\qquad \tilde{\ExOp}(f)|_{\hsphere\times V}=\ExOp(f|_{\hsphere\times V})|_{\hsphere\times V}\stackrel{\eqref{iudsiudsiusiudsiusuisiudsiudsiusdd767676}}{=}f|_{\hsphere\times V}\qquad\text{and}\qquad  \tilde{\ExOp}(f)|_{\{0\}\times V}=f|_{\{0\}\times V}.
\end{align*}
Since $\mathcal{W}\times V$, $\hsphereb\times V$ are open  with $(\mathcal{W}\times V)\cup (\hsphereb\times V)=P\times V$, the map   
$\tilde{\ExOp}(f)$ is of class $C^k$ with 
\begin{align*}
		(\dd^\ell \tilde{\ExOp}(f))|_{\Zero\times V\times H^\ell} &= \dd^\ell \ExOp(f|_{\hsphere\times V})= \ext{\ExOp(f|_{\hsphere\times V})}{\ell}|_{\Zero\times V\times H^\ell} 
	\qquad\quad\forall\: 0\leq \ell\llleq k,\\
	(\dd^\ell \tilde{\ExOp}(f))|_{\hsphereb\times V\times H^\ell}\hspace{3.6pt} &=\hspace{19pt} \dd^\ell f\hspace{19pt}= \ext{f}{\ell}|_{\hsphereb\times V\times H^\ell} 
	\qquad\quad\hspace{42pt}\forall\: 0\leq \ell\llleq k.
\end{align*}
Then, continuity implies  
$\tilde{\ExOp}(f)\in \mathcal{C}_{\CV}^k(P\times V,F)$, with (observe $\xi^{-1}(0)\subseteq \hspherecb$)
\begin{align*} 
			\ext{\tilde{\ExOp}(f)}{\ell}|_{\Zero\times \CV\times H^\ell}&=\ext{\ExOp(f|_{\hsphere\times V})}{\ell}
			\qquad\:\:\text{and}\qquad\:\:
			 \ext{\tilde{\ExOp}(f)}{\ell}|_{\hspherecb\times \CV\times H^\ell}=\ext{f}{\ell}
\end{align*} 
for $0\leq \ell\llleq k$.
We thus have the linear (extension) map
\begin{align*}
	\tilde{\ExOp}\colon \mathcal{C}_{\CV}^k(\hsphereb\times V,F) \rightarrow \mathcal{C}_{\CV}^k(P\times V,F),\qquad f\mapsto \tilde{\ExOp}(f).
\end{align*} 
\item
\label{dsoioidsoidsoidsoidsoidsoidsoioidsds1}
By construction, we have 
$\tilde{\ExOp}(f)|_{\mathcal{T}\times V}= \ExOp(f|_{\hsphere\times V})|_{\mathcal{T}\times V}\stackrel{\eqref{iudsiudsiusiudsiusuisiudsiudsiusdd767676}}{=}0$. Since $\mathcal{T}\times V$ is open, we obtain 
\begin{align*}
	(\dd^\ell \tilde{\ExOp}(f))|_{\mathcal{T}\times V\times H^\ell} = 0\qquad\stackrel{\text{continuity}}{\Longrightarrow}\qquad \ext{\tilde{\ExOp}(f)}{\ell}|_{\mathcal{T}\times \CV\times H^\ell}=0
\end{align*}
for $0\leq \ell\llleq k$.\qedhere 
\end{itemize}}
\endgroup
\end{proof}
\end{remark}
\begin{example}
\label{sdkjdskjkjdskjsdkdsdsds}
Let $0\neq \xi\in \Sem{P}$ be of class $C^k$ on $\Zero$. Then,  \ref{ncnbvcnbnb2}, \ref{ncnbvcnbnb1}, \ref{ncnbvcnbnb3} are evident, and \ref{ncnbvcnbnb4} holds by Example \ref{exampleharmonicdef}.\ref{exampleharmonicdef2}. 
For instance:
\begingroup
\setlength{\leftmargini}{17pt}
{
\renewcommand{\theenumi}{{\alph{enumi})}} 
\renewcommand{\labelenumi}{\theenumi}
\begin{enumerate}
\item
\label{sdkjdskjkjdskjsdkdsdsds0}
Let $P=\RR^2$ and $\xi\colon \RR^2\ni (x,y)\mapsto |x|\in [0,\infty)$. Then, $\xi$ is smooth on $\Zero=\{(x,y)\in \RR^2\:|\: x\neq 0\}$. 
\item
\label{sdkjdskjkjdskjsdkdsdsds2}  
Let $(\hilbert,\innpr{\cdot}{\cdot})$ be a real or complex\footnote{Also in the complex case, differentiability is meant w.r.t.\ the real structure on $\hilbert$, i.e. we do not consider complex differentiability (holomorphicity) at this point.} pre-Hilbert space, and set $P:=\hilbert$ as well as $\xi(\cdot):=\sqrt{\innpr{\cdot}{\cdot}}$. Then, $\xi$ is smooth on $\mathcal{W}=P\setminus\{0\}$ by Proposition \ref{iuiuiuiuuzuzuztztttrtrtr}.  
We mention that in the real case, an extension operator can also be obtained by explicit application of Theorem \ref{aaaaakdskdsjkkjds}.\ref{aaaaakdskdsjkkjds3}. More details are provided in Appendix \ref{kjdkjdskjsdkj}.  \hspace*{\fill}\qed
\end{enumerate}}
\endgroup
\end{example}
\noindent
Let $\rho\in C^\infty(\RR,\RR)$ be given with\footnote{For instance, choose $\rho\colon \RR\ni x\mapsto x+(x-1)^2\in \RR$.} 
\begin{align}
\label{dslkjfskjdskdsjlkdsds}
	 \id_{(0,1)}<\rho|_{(0,1)}<1,\qquad \rho(1)=1,\qquad \id_{(1,\infty)}<\rho|_{(1,\infty)}.
\end{align}
\vspace{-21pt}	
\begingroup
\setlength{\leftmargini}{12pt}
{
\begin{itemize}
\item
	We consider the smooth map 
	$\eta\colon \RR\times H \ni (t,Z,x)\mapsto (t\cdot Z,x)\in H$.  
	Lemma \ref{lkjfdlkjfdlkjfdlkjfd} and \ref{ncnbvcnbnb3} imply
\begin{align*}
	\mathcal{N}\colon \mathcal{C}_{\hspherec\times \CV}^k(\hsphere\times V,F)\rightarrow \mathcal{C}^k_{(0,1]\times \hspherec\times \CV}((0,1)\times \hsphere\times V,F),\qquad f\mapsto f\cp \eta|_{(0,1)\times \hsphere\times V}.
\end{align*} 
\vspace{-18pt}
\item
Theorem \ref{aaaaakdskdsjkkjds} provides the extension operator $\hat{\ExOp}\equiv\ExOp_{0,\tau,1}(H,\OO\times V,\hspherec\times \CV)$, hence 
\begin{align*}
	\hat{\ExOp}\colon \mathcal{C}^k_{(0,1]\times \hspherec\times \CV}((0,1)\times \hsphere \times V,F)\rightarrow \mathcal{C}^k_{(0,\infty)\times\hspherec\times \CV}((0,\infty)\times \hsphere \times V,F).
\end{align*}
\vspace{-14pt}
\item
	We consider the $C^k$-map (recall \ref{ncnbvcnbnb2},  \ref{ncnbvcnbnb3} and  \eqref{dslkjfskjdskdsjlkdsds}) 
	\begin{align*}
	\textstyle\mu\colon \Zero \times E\rightarrow \RR\times H,\qquad (Z,x)\mapsto  \big((\rho\cp\xi)(Z), 
	(\rho\cp\xi)(Z)^{-1}\cdot Z,x\big). 
\end{align*}
By construction, we have $\eta\cp \mu = \id_{\Zero\times E}$, and furthermore for $Y\subseteq E$:
\begingroup
\setlength{\leftmarginii}{13pt}
{
\begin{itemize}
\item
$\mu(\Zero\times Y)\subseteq (0,\infty)\times \hspherec \times Y$,
\item
\hspace{3pt}$\mu( \hsphere\times Y)\subseteq \hspace{5.5pt}(0,1)\times \hsphere\times Y$,
\item
\hspace{3pt}$\mu(\mathcal{U}\times Y)\subseteq (1,\infty)\times \hsphere\times Y$.
\end{itemize}}
\endgroup
\end{itemize}}
\endgroup
\noindent
Let now $f\in \mathcal{C}_{\hspherec\times\CV}^k(\hsphere\times V,F)$ be given. We set
\begin{align*}
	\chi:= (\hat{\ExOp}\cp\mathcal{N})(f)=\hat{\ExOp}(f\cp \eta) \in \mathcal{C}^k_{(0,\infty)\times\hspherec\times \CV}((0,\infty)\times \hsphere \times V,F)\\[4pt]
	\text{and}\hspace{135pt}\\[3pt]
	\alpha:= \chi\cp\mu|_{\Wero\times V}.\hspace{103pt}
\end{align*}
\vspace{-20pt}
\begingroup
\setlength{\leftmargini}{12pt}
{
\begin{itemize}
\item
We have $\alpha\in \mathcal{C}^k_{\Zero\times \CV}( \Wero \times V,F)$ by Lemma \ref{lkjfdlkjfdlkjfdlkjfd}, because 
\begin{align*}
	\mu(\Wero\times V)&\subseteq (0,\infty)\times \hsphere\times V\qquad\text{and}\qquad
	\mu(\Zero\times \CV)\subseteq (0,\infty)\times \hspherec\times \CV\qquad\text{holds}.
\end{align*}
\item
Since $\mu( \hsphere\times V)\subseteq (0,1)\times \hsphere\times V$ holds, we have by Theorem \ref{aaaaakdskdsjkkjds}.\ref{aaaaakdskdsjkkjds2}
\begin{align}
\label{podspodsposdpodspopopodsdsds87ds878787}
\begin{split} 
	\alpha|_{\hsphere\times V}&= (\hat{\ExOp}\cp\mathcal{N})(f)\cp\mu|_{\hsphere\times V}
	= \mathcal{N}(f)\cp\mu|_{\hsphere\times V}
	=(f\cp \eta\cp\mu)|_{\hsphere\times V}
	=f.
\end{split}
\end{align}
\end{itemize}}
\endgroup
\noindent
We consider the continuous maps
\begin{align*}
	\Phi^\ell:= \ext{\alpha}{\ell}\colon \Zero\times \CV\times H^\ell\rightarrow F\qquad\quad\forall\: 0\leq \ell\llleq k,
\end{align*}
and proceed as follows:
\begingroup
\setlength{\leftmargini}{12pt}
{
\begin{itemize}
\item
By construction, we have
\begin{align}
\label{sdpopodspodspodskdskdskjdkjdskjjkdskjds878787}
	\Phi^\ell|_{\Wero\times V\times H^\ell}=\ext{\alpha}{\ell}|_{\Wero\times V\times H^\ell}=\dd^\ell\alpha \qquad\quad\forall\: 0\leq \ell\llleq k.
\end{align}
Corollary \ref{lkddfjjfddflkjfdkldkfd} (with $f\equiv \alpha$, $A\equiv \mathcal{A}\times V$, $U\equiv \Zero\times V$, $\CU\equiv \Zero\times \CV$, i.e.\ $U\setminus A=\Wero\times V$) thus shows    
\begin{align*}
	\tilde{f}:=\Phi^0|_{\Zero\times V}\in \mathcal{C}^k_{\Zero\times \CV}({\Zero\times V},F)\qquad\text{with}\qquad \ext{\tilde{f}}{\ell}=\Phi^\ell\qquad\text{for all}\qquad  0\leq \ell\llleq k. 
\end{align*}
\item
We obtain from \eqref{podspodsposdpodspopopodsdsds87ds878787} and \eqref{sdpopodspodspodskdskdskjdkjdskjjkdskjds878787} that
\begin{align*}
	\tilde{f}|_{\hsphere\times V}=(\Phi^0|_{\Zero\times V})|_{\hsphere\times V}\stackrel{\eqref{sdpopodspodspodskdskdskjdkjdskjjkdskjds878787}}{=}\ext{\alpha}{0}|_{\hsphere\times V}=\alpha|_{\hsphere\times V}\stackrel{\eqref{podspodsposdpodspopopodsdsds87ds878787}}{=}f
\end{align*}
holds. Since $\hsphere\times V$ is open, we obtain
\begin{align*}
	\dd^\ell\tilde{f}|_{\hsphere\times V\times H^\ell} =\dd^\ell (\tilde{f}|_{\hsphere\times V})=\dd^\ell f=\ext{f}{\ell}|_{\hsphere\times V\times H^\ell} \qquad\quad\forall\: 0\leq  \ell\llleq k,
\end{align*}
so that continuity yields 
\begin{align}
\label{propoueuueueueuedsdsdsssssssssssssssssssssueu1}
	\ext{\tilde{f}}{\ell}|_{\hspherec\times \CV\times H^\ell}=\ext{f}{\ell}\qquad\quad\forall\: 0\leq  \ell\llleq k.
\end{align}
\item
We obtain from \eqref{sdpopodspodspodskdskdskjdkjdskjjkdskjds878787}
\begin{align*}
	\tilde{f}|_{\mathcal{V}\times V}=(\Phi^0|_{\Zero\times V})|_{\mathcal{V}\times V}\stackrel{\eqref{sdpopodspodspodskdskdskjdkjdskjjkdskjds878787}}{=}\ext{\alpha}{0}|_{\mathcal{V}\times V}=\alpha|_{\mathcal{V}\times V}=\chi\cp\mu|_{\mathcal{V}\times V}= \hat{\ExOp}(\mathcal{N}(f))\cp\mu|_{\mathcal{V}\times V}.
\end{align*}
Since $\rho|_{(1,\infty)}>\id_{(1,\infty)}$ holds, Theorem \ref{aaaaakdskdsjkkjds}.\ref{aaaaakdskdsjkkjds2} yields
\begin{align*}
		\textstyle\tilde{f}(Z,x)=\hat{\ExOp}(\mathcal{N}(f))\big((\rho\cp\xi)(Z), 
	(\rho\cp\xi)(Z)^{-1}\cdot Z,x\big)=0\qquad\quad\forall\: Z\in \mathcal{V},\: x\in V. 
	\end{align*}	
	Since $\mathcal{V}\times V$ is open, this implies $\dd^\ell\tilde{f}|_{\mathcal{V}\times V\times H^\ell}=0$ for all $0\leq \ell\llleq k$, so that continuity implies  
\begin{align}
\label{propoueuueueueuedsdsdsssssssssssssssssssssueu2}
\ext{\tilde{f}}{\ell}|_{\mathcal{T}\times \CV\times H^\ell}=0\qquad\quad\forall\: 0\leq \ell\llleq k.
\end{align}	 
\end{itemize}}
\endgroup
\noindent
We are ready for the proof of Application \ref{poxpocxpocxpocxlkdsldslds}: 
\begin{proof}[Proof of Application \ref{poxpocxpocxpocxlkdsldslds}]
	Obviously, the assignment
	\begin{align*}
		\ExOp\colon \mathcal{C}_\CV^k(\hsphere\times V,F)\ni f \mapsto \tilde{f} \in \mathcal{C}_{\Zero\times \CV}^k(\Zero\times V,F)
	\end{align*}
	is linear; and the rest is clear from \eqref{propoueuueueueuedsdsdsssssssssssssssssssssueu1} and \eqref{propoueuueueueuedsdsdsssssssssssssssssssssueu2}.
\end{proof}

\subsection{Partially Constant Maps and Parametrizations}
Let $E,F\in \HLCV$, $k\in \NN\cup\{\infty\}$, and $S\equiv \{S_\alpha\}_{\alpha\in I}$ be a family of disjoint subsets of $E$ with $E=\bigcup_{\alpha\in I} S_\alpha$. 
For $-\infty\leq a<b\leq\infty$, we define 
\begin{align*}
	C^k(a,b,S)&:=\{f\in C^k((a,b)\times E,F)\:|\: f|_{\{t\}\times S_\alpha}\text{ is constant for each }t\in (a,b)\text{ and }\alpha\in I\}\\[1pt]
	\mathcal{C}^k(a,b,S)&:=C^k(a,b,S)\cap \mathcal{C}^k_{(a,b]\times E}((a,b)\times E,F).  
\end{align*}   
Let now $-\infty\leq a<\tau<b<\infty$ be fixed. Theorem \ref{aaaaakdskdsjkkjds} provides the extension operator
\begin{align*}
	\ExOp\equiv \ExOp_{a,\tau,b}(E,E,E)\colon \mathcal{C}^k_{(a,b]\times E}((a,b)\times E,F) \rightarrow C^k((a,\infty)\times E,F).
\end{align*}
Theorem \ref{aaaaakdskdsjkkjds}.\ref{aaaaakdskdsjkkjds3} (for $\dind\equiv0$) implies
\begin{align}
\label{oidsoidsoidsoidoids}
	\ExOp_S:=\ExOp|_{\mathcal{C}^k(a,b,S)}\colon \mathcal{C}^k(a,b,S)\rightarrow C^k(a,\infty,S). 
\end{align}
We can apply this in the following way. Let $H\in \HLCV$, and 
$\psi\in C^k((a,\infty)\times E,H)$ an open map, such that the following conditions are fulfilled:
\vspace{-3pt}
\begingroup
\setlength{\leftmargini}{16pt}
{
\renewcommand{\theenumi}{\alph{enumi})} 
\renewcommand{\labelenumi}{\theenumi}
\begin{enumerate} 
\item
\label{ancnbvcnbnb1}
$\psi|_{\{t\}\times S_\alpha}$ is constant for each $t\in (a,\infty)$  and $\alpha\in I$.
\vspace{2pt} 
\item
\label{ancnbvcnbnb3}
For each $z\in \im[\psi]$, we have $\psi^{-1}(z)=\{t(z)\}\times S_{\alpha(z)}$, for certain $t(z)\in (a,\infty)$ and $\alpha(z)\in I$. 
\item
\label{ancnbvcnbnb2}
For each $z\in \im[\psi]$, there exist $U_z\subseteq (a,\infty)\times E$ and $W_z\subseteq \im[\psi]$ open with $z\in W_z$, such that $\psi|_{U_z}\colon U_z\rightarrow W_z$ is a $C^k$-diffeomorphism, i.e., we have $(\psi|_{U_z})^{-1}\in C^k(W_z,U_z)$.
\end{enumerate}}
\endgroup
\noindent
Let $U:=\psi((a,b)\times E)$ and $\CU:=\psi((a,b]\times E)$. 
\vspace{-3pt}
\begingroup
\setlength{\leftmargini}{12pt}
{
\begin{itemize}
\item
Since $\psi$ is continuous and open, we have $(U,\CU)\in \Omega(H)$. 
\item
Let $f\in \mathcal{C}_{\CU}^k(U,F)$ be fixed. By Lemma \ref{lkjfdlkjfdlkjfdlkjfd} and \ref{ancnbvcnbnb1}, we have $g:=f\cp \psi|_{(a,b)\times E} \in \mathcal{C}^k(a,b,S)$, hence $\tilde{g}:=\mathcal{E}_S(g)\in C^k(a,\infty,S)$ by \eqref{oidsoidsoidsoidoids}. 
\item
We fix $\iota\colon \im[\psi]\rightarrow (a,\infty)\times E$ with $\iota(z)\in \psi^{-1}(z)$ for each $z\in \im[\psi]$, and set 
\begin{align*}
	\tilde{f}\colon \im[\psi]\rightarrow F,\qquad  z\mapsto \tilde{g}(\iota(z)). 
\end{align*}
This is defined by \ref{ancnbvcnbnb3} and $\tilde{g}\in  C^k(a,\infty,S)$. In particular, for each $z\in \im[\psi]$ we have $\tilde{f}|_{W_z}=\tilde{g}\cp (\psi|_{U_z})^{-1}$, which shows that $\tilde{f}$ is of class $C^k$. 
\end{itemize}}
\endgroup
\noindent
We obtain the linear extension map 
\begin{align}
\label{lkjdsksdkld}
	\tilde{\mathcal{E}}\colon \mathcal{C}_{\CU}^k(U,F)\rightarrow C^k(\im[\psi],F),\qquad f\mapsto \tilde{f}. 
\end{align}
We consider the following example.
\begin{example}
\label{kjfdkjfdjk}
Let $E=\RR$, $H:=\RR^2$, $a=0$, $b=1$, $I:=[0,2\pi)$, $S_\alpha:=\{\alpha+ 2\pi\cdot \ZZ\}$ for $\alpha\in I$, and 
\begin{align*}
	\psi\colon (0,\infty)\times \RR \rightarrow \RR^2\setminus\{0\},\qquad (t,\varphi)\mapsto (t\cdot\cos(\varphi),t\cdot \sin(\varphi)).  
\end{align*} 
According to the above definitions, we have ($\|\cdot\|$ denotes the euclidean norm on $\RR^2$)
\begin{align*}
	U=\{x\in \RR^2\:|\: 0<\|x\|<1\},\qquad 
	\CU=\{x\in \RR^2\:|\: 0<\|x\|\leq 1\},\qquad \im[\psi]=\RR^2\setminus \{0\}.
\end{align*}
Then, \eqref{lkjdsksdkld} provides the linear extension map $\tilde{\ExOp}\colon \mathcal{C}^k_{\CU}(U,F)\rightarrow C^k(\RR^2\setminus \{0\},F)$. 
Let 
\begin{align*}
	\disk:=\{x\in \RR^2\:|\: \|x\|<1\}\qquad\quad \text{and}\qquad\quad\diskc:=\{x\in \RR^2\:|\: \|x\|\leq 1\}.
\end{align*} 
We obtain a linear extension map $\hat{\ExOp}\colon \mathcal{C}^k_{\diskc}(\disk,F)\rightarrow C^k(\RR^2,F)$ if we set
$$
\hat{\ExOp}(f)\colon \RR^2\rightarrow F,\qquad  z \mapsto 
\begin{cases}
\tilde{\ExOp}(f|_{U})(z) &\:\:\text{for}\quad\:\: z\neq 0\\
f(z) &\:\:\text{for}\quad\:\: z=0 
\end{cases}
$$
for each $f\in \mathcal{C}^k_{\diskc}(\disk,F)$. 
\hspace*{\fill}\qed
\end{example}

\section{The Proof of Theorem \ref{aaaaakdskdsjkkjds}}
\label{lkdslkjdslkdslkdslkdsdsds}
In this section, we prove Theorem \ref{aaaaakdskdsjkkjds}. For this, we let $F\in \HLCV$ and $k\in \NN\cup\{\infty\}$ be fixed, and recall the definitions made in the beginning of Sect.\ \ref{kjfdkjfjkfdfd}. We make the following simplifications to our argumentation:
\begingroup
\setlength{\leftmargini}{12pt}
{
\begin{itemize}
\item
	It suffices to prove Theorem \ref{aaaaakdskdsjkkjds} for the case $a=-\infty$, as the general case then follows by cutoff arguments. Specifically, let $-\infty <a<\tau <b<\infty$ be given, and fix $a<\kappa<\kappa'< \tau$ as well as $\rho\in C^\infty(\RR,\RR)$ with 
\begin{align*}
	\rho|_{(-\infty,\kappa]}=0\qquad\quad \text{and}\qquad\quad \rho|_{[\kappa',\infty)}=1. 
\end{align*}	
For each $E\in \HLCV$ and $(V,\CV)\in \Omega(E)$, we define the linear map 
		$\xi(E,V,\CV)\colon \mathcal{C}_{\CV}^k((a,b)\times V,F)\rightarrow \mathcal{C}_{\CV}^k((-\infty,b)\times V,F)$  
	 by  
$$
	\xi(E,V,\CV)(f)(t,x):=
\begin{cases}
0 &\:\:\text{for}\quad\:\: (t,x)\in (-\infty,a]\times V\\
\rho(t)\cdot f(t,x) &\:\:\text{for}\quad\:\: (t,x)\in \hspace{13.5pt}(a,b)\times V
\end{cases}
$$	 
for $f\in \mathcal{C}_{\CV}^k((a,b)\times V,F)$. We obtain extension operators as in Theorem \ref{aaaaakdskdsjkkjds} if for $E\in \HLCV$,  $(V,\CV)\in \Omega(E)$, $f\in \mathcal{C}_{\CV}^k((a,b)\times V,F)$,  we set
$$
	\ExOp_{a,\tau,b}(E,V,\CV)(f)(t,x):=
\begin{cases}
	f(t,x)&\:\:\text{for}\quad\:\: (t,x)\in\hspace{3.7pt} (a,b)\times V\\
\ExOp_{-\infty,\tau,b}(E,V,\CV)(\xi(E,V,\CV)(f))(t,x) &\:\:\text{for}\quad\:\: (t,x)\in [b,\infty)\times V.
\end{cases}
$$
\item
	To simplify the notations, in the following we restrict to the case $b=0$. The case $b\neq 0$ follows in the same way, and can alternatively be obtained from the statement for $b=0$ via application of translations.
\end{itemize}}
\endgroup
\noindent
For the rest of this section, let thus $\tau\in (-\infty,0)$ be fixed (i.e., we have $a=-\infty$ and $b=0$). 
	We choose $\tau<\upsilon < 0$ and $\varrho\in C^\infty(\RR,\RR)$, such that 
\begin{align}
\label{lksdlklkdsdsdsds}
	|\varrho|\leq 1,\qquad\varrho|_{(-\infty,\tau]}=0,\qquad\varrho|_{[\upsilon,0]}=1\qquad\text{holds, hence}\qquad \varrho^{(j)}|_{[\upsilon,0]}=0 \quad\text{for}\quad j\geq 1, 
\end{align}  
and define the constants
\begin{align}
\label{kjskjdsjkjdskkjdskjdskjds}
	M_p:= \sup\Big\{\he\big|\he\varrho^{(j)}(t)\he\big|\:\:\Big|\: \:t\in [\tau,0],\: 0\leq j\leq p\:\Big\}\geq 1 \qquad\quad\forall\:  p\in \NN.
\end{align}
According to \cite{SEELEY}, there exists a sequence $\{c_n\}_{n\in \NN}\subseteq \RR$ with
\begingroup
\setlength{\leftmargini}{17pt}
{
\renewcommand{\theenumi}{\rm\roman{enumi})} 
\renewcommand{\labelenumi}{\theenumi}
\begin{enumerate} 
\item
\label{zoiuzuiuuuzzuzuziizzuzu1}
	$\sum_{j=0}^\infty c_j\cdot(-2^j)^q=1$\:\:\hspace{3.05pt} for each\:\: $q\in \NN$,
\item
\label{zoiuzuiuuuzzuzuziizzuzu2}
	$\sum_{j=0}^\infty |c_j|\cdot(2^j)^q<\infty$\:\: for each\:\: $q\in \NN$.
\end{enumerate}}
\endgroup
\noindent
Given some $f\in \mathcal{C}_{\CV}^k((-\infty,0)\times V,F)$, its extension will be defined (see \eqref{oidsoioisdoioidsoidsds} in Sect.\ \ref{kjdskjdskjsdkjdskjdskjsddssd}) in analogy to \cite{SEELEY} by
\begin{align}
\label{kjdskjdskjdsds}
	\tilde{f}(t,x):=
\begin{cases}
\ext{f}{0}(t,x) &\:\:\text{for}\quad\:\: (t,x)\in (-\infty,0]\times V\\
 \sum_{j=0}^\infty c_j\cdot \varrho(-2^j\cdot t) \cdot f(-2^j\cdot t,x) &\:\:\text{for}\quad\:\: (t,x)\in \hspace{7.3pt}(0,\infty)\times V.
\end{cases}
\end{align}
The sum in the second line is locally finite as $\varrho$ is zero on $(-\infty,\tau]$, hence $\tilde{f}$ is defined and of class $C^k$ on $(\RR\setminus \{0\})\times V$. We basically will have to show that $\tilde{f}$ is of class $C^k$ on whole $\RR\times V$, and that its $\ell$-th differential extends continuously to $\RR\times \CV\times H[E]^\ell$ for each $0\leq \ell\llleq k$. For this, we need to construct these extensions explicitly, which will be done in analogy to the definition of $\tilde{f}$. 
For our argumentation, we shall need the following corollary to Lemma \ref{iuiuiurehjncnmnmvcvcvcvcvc} (Corollary \ref{lkddfjjfddflkjfdkldkfd}) and Example \ref{exampleharmonicdef}.\ref{exampleharmonicdef4}. 
\begin{corollary}
\label{dfkjkdfkjdfkjdf}
Let $E\in \HLCV$, $(V,\CV)\in \Pair$, as well as  $f_{-}\in C^k((-\infty,0)\times V,F)$ and $f_{+}\in C^k((0,\infty)\times V,F)$ 
 be given. Assume furthermore that 
 for each $0\leq \ell\llleq k$, there exists a continuous map  
	$\Phi^{\ell}\colon \RR \times \CV \times \diffspace[E]^\ell\rightarrow F$  
that restricts to $\dd^\ell f_\pm$. Then, we have 
\begin{align*}
	\mathcal{C}_\CV^k(\RR\times V,F)\ni \tilde{f}:=&\:\Phi^{0}|_{\RR\times V}\\
	\dd^\ell \tilde{f}=&\:\Phi^\ell\hspace{0.8pt}|_{\RR\times V\times \diffspace[E]^\ell}\qquad\quad \forall\: 0\leq \ell\llleq k\\
	\ext{\tilde{f}}{\ell}=&\:\Phi^\ell\qquad\qquad\qquad\quad\hspace{8pt} \forall\: 0\leq \ell\llleq k
\end{align*}
(the third line implies the second line).
\end{corollary}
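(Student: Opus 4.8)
The plan is to realize this as an adaptation of the proof of Lemma \ref{iuiuiurehjncnmnmvcvcvcvcvc}. First I would set $\diffspace:=\RR\times E$ (so that $H[E]=\diffspace[E]$), $U:=\RR\times V$, and $A:=\{0\}\times V$, which is closed in $U$, and glue the two given maps into a single
\[
	f\colon U\setminus A=(\RR\setminus\{0\})\times V\rightarrow F,\qquad f|_{(-\infty,0)\times V}:=f_-,\quad f|_{(0,\infty)\times V}:=f_+ .
\]
Since $f$ is of class $C^k$ on each of the two open pieces, we have $f\in C^k(U\setminus A,F)$, and by hypothesis $\Phi^\ell|_{(U\setminus A)\times\diffspace[E]^\ell}=\dd^\ell f$ for $0\leq\ell\llleq k$. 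The goal is then precisely the conclusion of Corollary \ref{lkddfjjfddflkjfdkldkfd} for the pair $(U,\RR\times\CV)\in\Omega(\diffspace[E])$, with $\tilde{f}:=\Phi^0|_U$.

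The essential subtlety that I would flag at the outset is that the gluing hyperplane $A=\{0\}\times V$ is \emph{not} harmonic, so Corollary \ref{lkddfjjfddflkjfdkldkfd} cannot be invoked as a black box. For a tangent direction $v=(0,u)$ the harmonic condition does hold (take $\gamma(\lambda):=(\lambda,0)$ and $\delta>0$ with $y+[-\delta,\delta]\cdot u\subseteq V$), but for a transverse direction $v=(\lambda_0,u)$ with $\lambda_0\neq 0$ no admissible path exists: since $\gamma(0)=0$ forces $\gamma_1(\lambda)\to 0$, the first coordinate $\gamma_1(\lambda)+s\lambda_0$ vanishes for small $\lambda$ and the choice $s=-\gamma_1(\lambda)/\lambda_0\in[-\delta,\delta]$. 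I would therefore re-run the inductive scheme of Lemma \ref{iuiuiurehjncnmnmvcvcvcvcvc}, establishing its key limit \eqref{kjdkjfdkjfdfdfd} at points $x=(0,y)\in A$ for every $v\in\diffspace[E]$, by a case distinction on $\lambda_0$.

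In the transverse case $\lambda_0\neq 0$, the punctured segment $\{x+s\cdot v\:|\:0<|s|<\delta\}$ already lies in $U\setminus A$ as soon as $\delta$ is small enough that $y+[-\delta,\delta]\cdot u\subseteq V$, so no auxiliary path is needed: applying \eqref{isdsdoisdiosd} to the curve $s\mapsto\dd^q f_+(x+s\cdot v,\cdot)$ on $s\in(0,h)$ and letting the lower limit tend to $0$ (using continuity of $\Phi^q$ and $\Phi^{q+1}$ together with $C^0$-continuity of the Riemann integral) yields $\Phi^q(x+h\cdot v,\cdot)-\Phi^q(x,\cdot)=\int_0^h\Phi^{q+1}(x+s\cdot v,\cdot,v)\,\dd s$, whence the right-hand directional derivative equals $\Phi^{q+1}(x,\cdot,v)$; the analogous computation with $f_-$ for $h<0$ gives the same value. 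The crucial point is that $f_+$ and $f_-$ are differentiated through the \emph{same} continuous map $\Phi^{q+1}$, so the two one-sided limits automatically coincide and the two-sided derivative exists. In the tangent case $\lambda_0=0$, harmonicity holds along $\gamma(\lambda)=(\lambda,0)$, so the integral computation in the proof of Lemma \ref{iuiuiurehjncnmnmvcvcvcvcvc} applies verbatim and yields the same identity. As every $v$ falls into one of the two cases, \eqref{kjdkjfdkjfdfdfd} holds for all $x\in A$ and all $v$.

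I would then close the induction as in Lemma \ref{iuiuiurehjncnmnmvcvcvcvcvc}: by Proposition \ref{iuiuiuiuuzuzuztztttrtrtr}.\ref{iterated} it suffices to show that $\dd^q\tilde{f}=\Phi^q|_{U\times\diffspace[E]^q}$ is of class $C^1$. Off $A$ its directional derivatives are classical, on $A$ they are given by the limit just established, and in all cases they coincide with the continuous map $\Phi^{q+1}$; since the dependence on the remaining $q$ arguments is continuous and $q$-multilinear (Proposition \ref{iuiuiuiuuzuzuztztttrtrtr}.\ref{linear}), the $C^1$-criterion Proposition \ref{iuiuiuiuuzuzuztztttrtrtr}.\ref{productrule} upgrades $\tilde{f}$ from class $C^q$ to class $C^{q+1}$ with $\dd^{q+1}\tilde{f}=\Phi^{q+1}|_{U\times\diffspace[E]^{q+1}}$. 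Because each $\Phi^\ell$ is continuous on $\RR\times\CV\times\diffspace[E]^\ell$ and restricts to $\dd^\ell\tilde{f}$, this gives $\tilde{f}\in\mathcal{C}^k_\CV(\RR\times V,F)$ with $\ext{\tilde{f}}{\ell}=\Phi^\ell$. The main obstacle is exactly the transverse direction: harmonicity fails there, and one must exploit the matching of the two one-sided extensions through a single $\Phi^{q+1}$ to obtain a genuine two-sided derivative across the gluing hyperplane.
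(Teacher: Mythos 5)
Your setup of $U=\RR\times V$, $A=\{0\}\times V$, $S=\RR\times\CV$ and the glued map $f$ is exactly the paper's; the paper's entire proof consists of this setup followed by the instruction to apply Lemma \ref{iuiuiurehjncnmnmvcvcvcvcvc}, i.e.\ it treats $\{0\}\times V$ as harmonic in $\RR\times V$ and uses that lemma as a black box. Your observation that the harmonicity condition \eqref{uifgiugifgjkkju}, read literally, fails for every transverse direction $v=(\lambda_0,u)$ with $\lambda_0\neq 0$ is correct: writing $\gamma=(\gamma_1,\gamma_2)$, continuity and $\gamma(0)=0$ force $|\gamma_1(\lambda)|\leq\delta|\lambda_0|$ for small $\lambda>0$, and then $s=-\gamma_1(\lambda)/\lambda_0\in[-\delta,\delta]$ produces a tube point with vanishing first coordinate, which lies either in $A$ or outside $U$, so no admissible pair $(\delta,\gamma)$ exists. (The same literal difficulty arises for the sphere in condition \ref{ncnbvcnbnb4} of Sect.\ \ref{udsiiuiudsidsdsdsds}, which suggests the definition of harmonicity is meant more loosely, e.g.\ permitting separate approach paths for the two half-segments $[0,\delta]\cdot v$ and $[-\delta,0]\cdot v$; under such a reading $\{0\}\times V$ does qualify, approaching from the right for one half and from the left for the other.) Your repair is the natural one and costs essentially nothing: for transverse $v$ the punctured segment $x+([-\delta,\delta]\setminus\{0\})\cdot v$ already lies in $U\setminus A$, so \eqref{isdsdoisdiosd} applied on $[\epsilon,h]$ together with continuity of $\Phi^q,\Phi^{q+1}$ and $C^0$-continuity of the Riemann integral yields $\Phi^q(x+h\cdot v,\cdot)-\Phi^q(x,\cdot)=\int_0^h\Phi^{q+1}(x+s\cdot v,\cdot,v)\,\dd s$ for both signs of $h$, the two one-sided derivatives agreeing because both are computed through the single continuous map $\Phi^{q+1}$; for tangential $v$ the path $\gamma(\lambda)=(\lambda,0)$ does satisfy \eqref{uifgiugifgjkkju} and the lemma's computation applies verbatim. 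This establishes \eqref{kjdkjfdkjfdfdfd} for all $(x,v)$, after which the induction closes as in the lemma. In summary, your proof is correct, follows the same overall route as the paper, and in fact supplies a case analysis that the paper's one-line appeal to Lemma \ref{iuiuiurehjncnmnmvcvcvcvcvc} omits and, strictly speaking, needs.
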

\begin{proof}
Let $U:= \RR\times V\subseteq H:=H[E]$, $A:=\{0\}\times V\subseteq U$, $\CU:= \RR\times \CV\subseteq H$, 
$$
f\colon U\setminus A \rightarrow F,\qquad  (t,x)\mapsto 
\begin{cases}
f_-(t,x) &\:\:\text{for}\quad\:\: (t,x)\in (-\infty,0)\times V\\
f_+(t,x) &\:\:\text{for}\quad\:\: (t,x)\in \phantom{-}(0,\infty)\times V, 
\end{cases}
$$
observe that $A$ is harmonic by Example \ref{exampleharmonicdef}.\ref{exampleharmonicdef4}  
and apply Corollary \ref{lkddfjjfddflkjfdkldkfd}. 
\end{proof}

\subsection{Elementary Facts and Definitions} 
For $E\in \HLCV$, we define $\EINS:=(1,0)\in \diffspace[E]$ as well as $\EINS_p:=(\EINS,\dots,\EINS)\in H[E]^p$ for $p\geq 1$, and consider the  maps
\begin{align*}
	\cchi&:= \pr_1  \colon \hspace{18.5pt} \diffspace[E]\rightarrow \:\RR,\qquad \hspace{14.65pt}\hspace{0.15pt}(\lambda,X)\mapsto \hspace{4pt}\lambda\\
	\llambda&:= \pr_1\times 0 \colon  \diffspace[E]\rightarrow \diffspace[E],\qquad (\lambda,X)\mapsto (\lambda,0)\\
	\wv&:=0 \times \pr_2 \colon \diffspace[E]\rightarrow \diffspace[E],\qquad (\lambda,X)\mapsto  (0,X).
\end{align*}
We furthermore define the following:
\begingroup
\setlength{\leftmargini}{12pt}{
\begin{itemize}
\item 
For $1\leq \ell\llleq k$ and $1\leq j\leq \ell$, we set 
\begin{align*}
	\cchi_{\ell,j}\colon& \diffspace[E]^\ell\rightarrow \hspace{1.9pt}\RR,\qquad\hspace{14.8pt} (w_1,\dots,w_\ell)\mapsto \cchi(w_j)\\
	\llambda_{\ell,j}\colon& \diffspace[E]^\ell\rightarrow \diffspace[E],\qquad (w_1,\dots,w_\ell)\mapsto \llambda(w_j)\\
	\wv_{\ell,j}\colon& \diffspace[E]^\ell\rightarrow \diffspace[E],\qquad (w_1,\dots,w_\ell)\mapsto \wv(w_j). 
\end{align*}
\item
For $1\leq \ell\llleq k$, $p\geq 1$, and $1\leq z_1,\dots,z_p\leq \ell$, we set 
\begin{align*}
	\CChi_{\ell,z_1,\dots,z_p}(\uw):=\cchi_{\ell,z_1}(\uw)\cdot{\dots} \cdot \cchi_{\ell,z_p}(\uw)\qquad\quad\forall\: \ul{w}\in H[E]^\ell.
\end{align*}
It helps to simplify the notations, in the following just to denote $\CChi_{\ell,z_1,\dots,z_p}(\uw):=1$ for the case that $p=0$ holds.  
\item
For $1\leq \ell\llleq k$ and $0\leq p\leq \ell$, we let $\mathrm{I}_{\ell,p}$ denote the set of all 
\begin{align*}
	\ul{\sigma}=(z_1,\dots,z_p,o_1,\dots,o_{\ell-p}) \in \{1,\dots,\ell\}^\ell
\end{align*}
such that the following conditions are fulfilled:\footnote{We thus have $I_{\ell,0} = (o_1,\dots,o_{\ell})=(1,\dots,\ell)$ and $I_{\ell,\ell}=(z_1,\dots,z_{\ell})=(1,\dots,\ell)$.}
\begingroup
\setlength{\leftmarginii}{12pt}{
\begin{itemize}
\item
$z_i<z_{i+1}$\hspace{2.5pt}\: for\: $1\leq i \leq p-1$,
\item
$o_j<o_{j+1}$\: for\: $1\leq j\leq \ell-p-1$,
\item
$z_i\neq o_j$ \hspace{8.4pt}\: for\: $1\leq i \leq p$\: and\: $1\leq j\leq \ell-p$.
\end{itemize}}
\noindent 
\endgroup
\end{itemize}}
\endgroup
\noindent
Let $V\subseteq E$ be non-empty open, $\Gamma\in C^k((0,\infty)\times V,F)$, and 
$1\leq \ell\llleq k$. 
By symmetry (and multilinearity) of the $\ell$-th differential, 
we have 
\begin{align}
\label{iuiufddiufddfd}
\begin{split}
	\dd^\ell \Gamma&\textstyle((t,x),\uw)\\
	&\!\textstyle=\sum_{p=0}^\ell \sum_{\ul{\sigma}\in \mathrm{I}_{\ell,p}} \dd^\ell \Gamma((t,x), \wv_{\ell, o_1}(\uw),\dots,\wv_{\ell,o_{\ell-p}}(\uw),\llambda_{\ell,z_1}(\uw),\dots,\llambda_{\ell,z_p}(\uw))\\[2pt]
	&\!\textstyle = \textstyle\sum_{p=0}^\ell  \sum_{\ul{\sigma}\in \mathrm{I}_{\ell,p}}D_{\llambda_{\ell,z_1}(\uw),\dots,\llambda_{\ell,z_p}(\uw)}\: \dd^{\ell-p} \Gamma((t,x), \wv_{\ell, o_1}(\uw),\dots,\wv_{\ell,o_{\ell-p}}(\uw))\\[2pt]
	&\!\textstyle = \sum_{p=0}^\ell  \sum_{\ul{\sigma}\in \mathrm{I}_{\ell,p}} \underbracket{\cchi_{\ell,z_1}(\uw)\cdot{\dots} \cdot \cchi_{\ell,z_p}(\uw)}_{=\: \CChi_{\ell,z_1,\dots,z_p}(\uw)}\cdot \hspace{1.5pt}\partial_t^{p}\big(\dd^{\ell-p} \Gamma((t,x), \wv_{\ell, o_1}(\uw),\dots,\wv_{\ell,o_{\ell-p}}(\uw))\big)
\end{split}
\end{align}
for each $t\in (0,\infty)$, $x\in V$, $\uw\in \diffspace[E]^\ell$.
\vspace{6pt}

\noindent
Similarly, if $f\in  \mathcal{C}_{\CV}^k((-\infty,0)\times V,F)$ holds, we obtain for 
$q=0,\dots,\ell$ (recall Remark \ref{jfdjkjkjfdkjfd})
\begin{align}
\label{iuiufddiufddfdkkiiuuiiuk}
\begin{split}
	\ext{f}{\ell}&\textstyle((t,x),\uw)\\	
	&\!\textstyle = \sum_{p=0}^\ell  \sum_{\ul{\sigma}\in \mathrm{I}_{\ell,p}} \CChi_{\ell,z_1,\dots,z_p}(\uw)\cdot \ext{f}{\ell}((t,x), \wv_{\ell, o_1}(\uw),\dots,\wv_{\ell,o_{\ell-p}}(\uw),\EINS_p)
\end{split}
\end{align}
for each $t\in (-\infty,0)$, $x\in \CV$, $\uw\in H[E]^\ell$. 

\subsection{Construction of the Extension Operators}
\label{kjdskjdskjsdkjdskjdskjsddssd}
Let $E\in \HLCV$, $(V,\CV)\in \Pair$, $f\in \mathcal{C}_{\CV}^k((-\infty,0)\times V,F)$, and $\dvar\leq -1$ be given.
\begingroup
\setlength{\leftmargini}{12pt}{
\begin{itemize}
\item
\label{kjfdjkfdjfdkjfdkj2}
We define the $C^k$-map and its $C^0$-extension: 
\begin{align}
\label{kjdskjdslkjsdds}
	\Gamma&[f,\dvar](t,x):= \varrho(\dvar\cdot t)\cdot f(\dvar\cdot t,x)\qquad\quad \hspace{35.68pt}\forall\: (t,x)\in (0,\infty)\times V,\\
\label{oidssdlksdklsdklsdsd0}
	\Psi^0&[f,\dvar](t,x):= \varrho(\dvar\cdot t)\cdot\ext{f}{0}(\dvar\cdot t,x)\qquad\quad\forall\: (t,x)\in (0,\infty)\times \CV.
\end{align}
\vspace{-17pt}
\item
For $1\leq \ell\llleq k$, we obtain from \eqref{iuiufddiufddfd} that
\begin{align}
\label{kjsdkjdskjds}
\begin{split}
	\dd^\ell \Gamma&[f,\dvar]((t,x),\uw)\\[2pt]
	&\hspace{-3pt}\textstyle = \textstyle\sum_{p=0}^\ell  \sum_{\ul{\sigma}\in \mathrm{I}_{\ell,p}}\CChi_{\ell,z_1,\dots,z_p}(\uw)\cdot \partial_t^{p} \big(\varrho(\dvar\cdot t)\cdot\: \dd^{\ell-p} f((\dvar\cdot t,x), \wv_{\ell, o_1}(\uw),\dots,\wv_{\ell,o_{\ell-p}}(\uw))\big)
	\\[2pt]
	&\hspace{-3pt}\textstyle = \sum_{p=0}^\ell  \sum_{\ul{\sigma}\in \mathrm{I}_{\ell,p}}\sum_{q=0}^p \binom{p}{q}\cdot \dvar^p\cdot \CChi_{\ell,z_1,\dots,z_p}(\uw)\cdot\varrho^{(q)}(\dvar\cdot t) \\[1pt]
	&\:\hspace{119pt} \cdot\dd^{\ell-q} f((\dvar\cdot t,x), \wv_{\ell, o_1}(\uw),\dots,\wv_{\ell,o_{\ell-p}}(\uw),\EINS_{p-q})
	\end{split}
\end{align}
holds for all $t\in (0,\infty)$, $x\in V$, $\uw\in \diffspace[E]^\ell$.
\item
For $1\leq \ell\llleq k$ and $q=0,\dots,\ell$, we define the continuous map
	\begin{align}
	\label{oidssdlksdklsdklsdsd1}
	\begin{split}
		\LLambda_{\ell,q}&[f,\dvar]((t,x),\uw)\\[1pt]
		&\hspace{-2.4pt}\textstyle:=  \sum_{p=q}^\ell  \sum_{\ul{\sigma}\in \mathrm{I}_{\ell,p}} \binom{p}{q}\cdot \dvar^p\cdot   \CChi_{\ell,z_1,\dots,z_p}(\uw)\cdot \varrho^{(q)}(\dvar\cdot t) \\[2pt]
	&\:\hspace{93.4pt} \cdot\ext{f}{\ell-q} ((\dvar\cdot t,x), \wv_{\ell, o_1}(\uw),\dots,\wv_{\ell,o_{\ell-p}}(\uw),\EINS_{p-q})
	\end{split}
	\end{align}	
	for all $t\in (0,\infty)$, $x\in \CV$, and $\uw\in \diffspace[E]^\ell$. Then by \eqref{kjsdkjdskjds}, the map 
	\begin{align}
	\label{oidssdlksdklsdklsdsd}
		\Psi^\ell[f,\dvar]&\textstyle:= \sum_{q=0}^\ell  \LLambda_{\ell,q}[f,\dvar] \in C^0((0,\infty)\times \CV\times \diffspace[E]^\ell,F)
	\end{align}
	continuously extends $\dd^\ell \Gamma[f,\dvar]$ for $1\leq \ell\llleq k$, i.e.\
	\begin{align}
	\label{fdlkfdlkdlkfdlkfdlkfd}
		\Psi^\ell[f,\dvar]|_{(0,\infty)\times V\times H[E]^\ell}=\dd^\ell \Gamma[f,\dvar]\qquad\quad\forall\: 1\leq \ell\llleq k.
	\end{align}
\end{itemize}} 
\endgroup
\noindent
For $E\in \HLCV$, $(V,\CV)\in \Pair$, $f\in \mathcal{C}_{\CV}^k((-\infty,0)\times V,F)$, we define the maps
\begin{align*}
	f_-&:=f\\[3pt]
	f_+&\textstyle:= \sum_{j=0}^\infty c_j\cdot\hspace{3.2pt} \Gamma[f,-2^j]\\[1pt]
	\Phi[f]^\ell_+&\textstyle:= \sum_{j=0}^\infty c_j\cdot \Psi^\ell[f,-2^j]\qquad\quad\forall\: 0\leq \ell\llleq k.
\end{align*}
We have the following statement.
\begin{lemma}
\label{kkjdsjdkjdsksd}
Let $E\in \HLCV$, $(V,\CV)\in \Pair$, $f\in \mathcal{C}_{\CV}^k((-\infty,0)\times V,F)$ be given. Then, $f_+\in C^k((0,\infty)\times V,F)$ holds, as well as 
\begin{align*}
	\Phi[f]_+^\ell \in C^0((0,\infty)\times \CV\times \diffspace[E]^\ell,F)\qquad\quad\forall\:0\leq \ell\llleq k. 
\end{align*}
Moreover, $\Phi[f]_+^\ell$ restricts to $\dd^\ell f_+$ for $0\leq \ell\llleq k$, with 
	$\Phi[f]_+^\ell|_{[-\tau,\infty)\times \CV\times H[E]^\ell}=0$.
\end{lemma}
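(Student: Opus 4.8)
The plan is to exploit that, although the series defining $f_+$ and $\Phi[f]_+^\ell$ are infinite, they are \emph{locally finite} on $(0,\infty)\times V$ (resp.\ on $(0,\infty)\times\CV\times\diffspace[E]^\ell$), so that on any subset bounded away from $t=0$ everything collapses to a finite sum and no convergence argument is required. Indeed, by \eqref{lksdlklkdsdsdsds} we have $\varrho^{(q)}(s)=0$ for every $q\geq0$ whenever $s\leq\tau$ (for $q\geq1$ this uses smoothness of $\varrho$ at the point $\tau$). Hence, for the summation index $d=-2^j$, the factor $\varrho(-2^j t)$ appearing in $\Gamma[f,-2^j]$ (see \eqref{kjdskjdslkjsdds}) and each factor $\varrho^{(q)}(-2^j t)$ appearing in $\LLambda_{\ell,q}[f,-2^j]$ (see \eqref{oidssdlksdklsdklsdsd1}) vanish as soon as $2^j t\geq -\tau$. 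Fixing $t_0>0$, all summands with $2^{j}\geq -\tau/t_0$ therefore vanish identically on $[t_0,\infty)\times V$ (resp.\ on $[t_0,\infty)\times\CV\times\diffspace[E]^\ell$), so only finitely many terms survive there.

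First I would record that each individual summand is well behaved: for $d\leq-1$ the rescaling $(t,x)\mapsto(d\cdot t,x)$ maps $(0,\infty)\times V$ into $(-\infty,0)\times V$ and is smooth by Proposition~\ref{iuiuiuiuuzuzuztztttrtrtr}.\ref{linear}, so $\Gamma[f,d]$ is the product of the smooth scalar $t\mapsto\varrho(d\cdot t)$ with the $C^k$-map $(t,x)\mapsto f(d\cdot t,x)$ and is thus of class $C^k$ by the differentiation rules in Proposition~\ref{iuiuiuiuuzuzuztztttrtrtr}; likewise $\Psi^\ell[f,d]$ is continuous on $(0,\infty)\times\CV\times\diffspace[E]^\ell$, being assembled from the continuous extensions $\ext{f}{m}$, the continuous linear maps $\wv_{\ell,o_i}$ and $\llambda_{\ell,z_i}$, and the continuous scalar factors $\CChi_{\ell,z_1,\dots,z_p}$ and $\varrho^{(q)}(d\cdot t)$. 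Combining this with the local finiteness from the first paragraph, $f_+=\sum_{j} c_j\,\Gamma[f,-2^j]$ agrees on each $[t_0,\infty)\times V$ with a finite sum of $C^k$-maps, whence $f_+\in C^k((0,\infty)\times V,F)$; the same reasoning yields $\Phi[f]_+^\ell\in C^0((0,\infty)\times\CV\times\diffspace[E]^\ell,F)$.

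For the restriction property I would use that a locally finite sum may be differentiated termwise. On $(0,\infty)\times V\times\diffspace[E]^\ell$ the identity \eqref{fdlkfdlkdlkfdlkfdlkfd} gives $\Psi^\ell[f,-2^j]=\dd^\ell\Gamma[f,-2^j]$; summing the finitely many surviving terms on each $[t_0,\infty)$ and invoking linearity of $\dd^\ell$ gives $\Phi[f]_+^\ell|_{(0,\infty)\times V\times\diffspace[E]^\ell}=\sum_{j} c_j\,\dd^\ell\Gamma[f,-2^j]=\dd^\ell\big(\sum_{j} c_j\,\Gamma[f,-2^j]\big)=\dd^\ell f_+$. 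The vanishing claim is then immediate from the support of $\varrho$: once $t\geq-\tau$ we have $-2^j t\leq\tau$ for every $j\geq0$ (since $2^j\geq1$), so every factor $\varrho(-2^j t)$ and $\varrho^{(q)}(-2^j t)$ is zero, each $\Gamma[f,-2^j](t,\cdot)$ and $\Psi^\ell[f,-2^j]((t,\cdot),\cdot)$ vanishes, and hence $\Phi[f]_+^\ell$ is zero on the asserted set.

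I expect the only genuine subtlety to be the local finiteness itself: the number of nonvanishing summands is \emph{not} uniformly bounded but grows without bound as $t\to0^+$, so the reduction to finite sums is legitimate only on subsets bounded away from $t=0$. This is exactly why the lemma asserts regularity on $(0,\infty)\times\cdots$ and nothing at $t=0$, and it is worth stressing that the summability estimates \eqref{zoiuzuiuuuzzuzuziizzuzu2} for $\{c_j\}$ play \emph{no} role here; they enter only later, when the behaviour across $t=0$, where the series is genuinely infinite, has to be controlled.
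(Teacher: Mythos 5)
Your proof is correct and follows essentially the same route as the paper's: local finiteness of the series away from $t=0$ (because $\varrho$ and all its derivatives vanish on $(-\infty,\tau]$), reduction on each such region to a finite sum of $C^k$- resp.\ continuous maps, termwise identification of $\Phi[f]_+^\ell$ with $\dd^\ell f_+$ via \eqref{fdlkfdlkdlkfdlkfdlkfd}, and the support of $\varrho$ for the vanishing statement. Note that your computation correctly yields vanishing for $t\geq -\tau=|\tau|$; the set $[\tau,\infty)$ appearing in the lemma (and in the paper's own proof) is a typo for $[-\tau,\infty)$, consistent with $[2b-\tau,\infty)$ in Theorem \ref{aaaaakdskdsjkkjds} specialized to $b=0$.
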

\begin{proof}
Let $s\in (0,\infty)$ as well as $0<\varepsilon< s$ be given, and set $I:=(s-\varepsilon,s+\varepsilon)$. There exists $N\in \NN$, such that $-2^j\cdot I\subseteq (-\infty,\tau)$ holds for each $j\geq N$. Since $\varrho|_{(-\infty,\tau]}=0$, we have (the first line implies the second line)
\begin{align*}
	\textstyle f_+|_{I\times V}&\textstyle=\sum_{j=0}^N c_j\cdot\hspace{10pt} \Gamma[f,-2^j]|_{I\times V}\\
	\textstyle \dd^\ell f_+|_{I\times V\times H[E]^\ell}&\textstyle=\sum_{j=0}^N c_j\cdot \dd^\ell\Gamma[f,-2^j]|_{I\times V\times H[E]^\ell}\\
		\textstyle \Phi[f]^\ell_+|_{I\times \CV\times \diffspace[E]^\ell}&\textstyle=\sum_{j=0}^N c_j\cdot  \hspace{4.2pt}\Psi^\ell[f,-2^j]|_{I\times \CV\times \diffspace[E]^\ell}
\end{align*}	   
for $0\leq \ell\llleq k$. Thus, $\Phi[f]_+^\ell$ is defined and continuous for $0\leq \ell\llleq k$, $f_+$ is defined and of class $C^k$, and $\Phi[f]_+^\ell$ restricts to $\dd^\ell f_+$ for $0\leq \ell\llleq k$ by \eqref{fdlkfdlkdlkfdlkfdlkfd}. Since $-2^j\cdot [-\tau,\infty)\subseteq (-\infty,\tau]$ holds for each $j\in \NN$ (with $\varrho|_{(-\infty,\tau]}=0$), we have $\Phi[f]_+^\ell|_{[\tau,\infty)\times \CV\times H[E]^\ell}=0$ for $0\leq\ell\llleq k$. 
\end{proof}
\noindent
For $E\in \HLCV$, $(V,\CV)\in \Pair$, $f\in \mathcal{C}_{\CV}^k((-\infty,0)\times V,F)$, and $0\leq \ell\llleq k$, we define the map $\Phi[f]^\ell\colon \RR\times \CV\times H[E]^\ell\rightarrow F$ by
\begin{align*}
	\Phi[f]^\ell|_{(-\infty,0]\times \CV\times H[E]^\ell}:=\ext{f}{\ell}\qquad\quad\text{as well as}\qquad\quad \Phi[f]^\ell|_{(0,\infty)\times \CV\times H[E]^\ell}:=\Phi[f]^\ell_+.
\end{align*}
In Sect.\ \ref{kjdskjsdkjdsds}, we prove the following statement. 
\begin{lemma}
\label{iufdiufdiufd}
Let $E\in \HLCV$, $(V,\CV)\in \Pair$, $f\in \mathcal{C}_{\CV}^k((-\infty,0)\times V,F)$ be given. Then, 
	$\Phi[f]^{\ell}$   
is continuous for each $0\leq \ell\llleq k$.
\end{lemma}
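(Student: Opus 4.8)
The plan is to prove continuity of $\Phi[f]^\ell$ globally by localising the whole difficulty at the seam $\{0\}\times\CV\times\diffspace[E]^\ell$. On $(0,\infty)\times\CV\times\diffspace[E]^\ell$ the map agrees with $\Phi[f]^\ell_+$, continuous by Lemma \ref{kkjdsjdkjdsksd}, and on $(-\infty,0)\times\CV\times\diffspace[E]^\ell$ it agrees with $\ext{f}{\ell}$, continuous by hypothesis; moreover a net approaching a seam point $(0,x_0,\uw_0)$ from within $t\le 0$ is handled by continuity of $\ext{f}{\ell}$ on $(-\infty,0]\times\CV\times\diffspace[E]^\ell$. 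Hence the entire content of the lemma reduces to showing $\Phi[f]^\ell_+((t_n,x_n),\uw_n)\to\ext{f}{\ell}((0,x_0),\uw_0)$ for every net $(t_n,x_n,\uw_n)\to(0,x_0,\uw_0)$ with $t_n>0$. I fix once and for all $\pp\in\Sem{F}$, a compact neighbourhood $\compacto\subseteq\CV$ of $x_0$, and a bounded $\bounded\subseteq E$ containing the (convergent, hence bounded) second components of all $\wv_{\ell,j}(\uw_n)$; then Corollary \ref{kndskjdskjsdcxcx}, applied on $[\tau,0]\times\compacto$, bounds $\pp(\ext{f}{m}(\cdot))$ uniformly in $n$ against the direction tuples that actually occur.

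First I would split, via \eqref{oidssdlksdklsdklsdsd}, $\Phi[f]^\ell_+=\sum_{j}c_j\sum_{q=0}^{\ell}\LLambda_{\ell,q}[f,-2^j]$ into the terms with $q=0$ and those with $q\ge 1$, and dispose of the latter first. In $\LLambda_{\ell,q}[f,-2^j]$, see \eqref{oidssdlksdklsdklsdsd1}, the factor $\varrho^{(q)}(-2^j t_n)$ with $q\ge 1$ is, by \eqref{lksdlklkdsdsdsds}, nonzero only when $-2^j t_n\in(\tau,\upsilon)$, i.e.\ for the indices with $2^j\in(-\upsilon/t_n,-\tau/t_n)$; the least such index $j_{\min}(t_n)$ tends to $\infty$ as $t_n\to 0^+$. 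On these active indices the base point $(-2^j t_n,x_n)$ lies in $[\tau,0]\times\compacto$, so bounding $|\varrho^{(q)}|\le M_q$ by \eqref{kjskjdsjkjdskkjdskjdskjds} and $\pp(\ext{f}{\ell-q}(\cdot))$ by Corollary \ref{kndskjdskjsdcxcx} dominates the $q\ge 1$ contribution by a constant multiple of $\sum_{j\ge j_{\min}(t_n)}|c_j|(2^j)^{\ell}$. By \ref{zoiuzuiuuuzzuzuziizzuzu2} this is a tail of a convergent series, so it tends to $0$.

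For the surviving $q=0$ part I would use the moment identities $\sum_{j}c_j(-2^j)^p=1$ from \ref{zoiuzuiuuuzzuzuziizzuzu1} together with the expansion \eqref{iuiufddiufddfdkkiiuuiiuk} of $\ext{f}{\ell}((0,x_n),\uw_n)$ to rewrite the error $\sum_{j}c_j\LLambda_{\ell,0}[f,-2^j]((t_n,x_n),\uw_n)-\ext{f}{\ell}((0,x_n),\uw_n)$ as a finite sum over the index pairs $(p,\ul{\sigma})$ of $\CChi_{\ell,z_1,\dots,z_p}(\uw_n)\cdot\sum_{j}c_j(-2^j)^p\,\delta_{n,j}$, where $\delta_{n,j}$ denotes the difference $\varrho(-2^j t_n)\,\ext{f}{\ell}((-2^j t_n,x_n),\uv)-\ext{f}{\ell}((0,x_n),\uv)$ with the common direction tuple $\uv=(\wv_{\ell,o_1}(\uw_n),\dots,\wv_{\ell,o_{\ell-p}}(\uw_n),\EINS_p)$. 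Splitting the inner $j$-sum at a threshold $N$, the tail $j\ge N$ is bounded, uniformly in $n$, by a constant multiple of $\sum_{j\ge N}|c_j|(2^j)^{\ell}$ (using $|\varrho|\le 1$ and the uniform Corollary \ref{kndskjdskjsdcxcx} bound), hence small for large $N$ by \ref{zoiuzuiuuuzzuzuziizzuzu2}; while for the finitely many $j<N$ continuity of $\ext{f}{\ell}$ and $\varrho(0)=1$ force each $\delta_{n,j}\to 0$ as $n\to\infty$. As $\ext{f}{\ell}((0,x_n),\uw_n)\to\ext{f}{\ell}((0,x_0),\uw_0)$ by continuity, the claimed limit follows.

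The main obstacle is exactly the interchange of the limit $t_n\to 0^+$ with the infinite summation over $j$: the number of active indices grows without bound while the weights $(2^j)^p$ blow up, so no termwise argument survives. This is what the two Seeley conditions are built to control — \ref{zoiuzuiuuuzzuzuziizzuzu2} furnishes, through the multiplicative estimates of Corollary \ref{kndskjdskjsdcxcx}, the uniform domination needed both for the escaping $q\ge 1$ terms and for the tail of the $q=0$ part, and \ref{zoiuzuiuuuzzuzuziizzuzu1} furnishes the exact cancellation that lets the $q=0$ part reproduce $\ext{f}{\ell}$ at the seam. The one remaining point requiring care is the uniformity of all constants in the moving arguments $(x_n,\uw_n)$, which is guaranteed because these remain in the fixed compact set $\compacto$ and the fixed bounded set $\bounded$.
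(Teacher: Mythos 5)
Your argument is correct and follows essentially the same route as the paper's proof: the same decomposition \eqref{iuiufddiufddfd}/\eqref{iuiufddiufddfdkkiiuuiiuk}, the same uniform multiplicative bounds from Corollary \ref{kndskjdskjsdcxcx}, Property \ref{zoiuzuiuuuzzuzuziizzuzu1} to make the $q=0$ part reproduce $\ext{f}{\ell}$ and Property \ref{zoiuzuiuuuzzuzuziizzuzu2} to control all tails, organised as an $\varepsilon/3$-split at a threshold $N$ (the paper disposes of the $q\geq 1$ terms with $j\leq N$ by taking $t<|\upsilon|/2^N$ rather than via your moving threshold $j_{\min}(t_n)$, but this is the same mechanism). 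Two cosmetic repairs: a point of an infinite-dimensional $E$ has no compact neighbourhood, so take $\compacto=\{x_0\}$ and work on the relative neighbourhood $U\cap\CV$ that Corollary \ref{kndskjdskjsdcxcx} returns; and a convergent net need not be bounded, so replace the ``bounded $\bounded$ containing the second components'' by the observation that $\maxn[|\cdot|,\qq]$ of the occurring directions is eventually bounded, which is all the multiplicative estimate requires.
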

\noindent
Together with Lemma \ref{kkjdsjdkjdsksd} and Corollary \ref{dfkjkdfkjdfkjdf}, Lemma \ref{iufdiufdiufd} implies\footnote{Notably, this coincides with $\tilde{f}$ as defined in \eqref{kjdskjdskjdsds}.} 
\begin{align}
\label{oidsoioisdoioidsoidsds}
\begin{split}
	\mathcal{C}_\CV^k(\RR\times V,F)\ni \tilde{f}:=&\:\Phi[f]^{0}|_{\RR\times V}\\
	\dd^\ell \tilde{f}=&\:\Phi[f]^\ell\hspace{0.8pt}|_{\RR\times V\times \diffspace[E]^\ell}\qquad\quad 0\leq \ell\llleq k\\
	\ext{\tilde{f}}{\ell}=&\:\Phi[f]^\ell\qquad\qquad\qquad\quad\hspace{9pt} 0\leq \ell\llleq k.
\end{split}
\end{align}
For $E\in \HLCV$ and $(V,\CV)\in \Omega(E)$, we define the map
\begin{align}
\label{kjdskjdsjkskjsdkjdskj}
	\ExOp_{-\infty,\tau,0}(E,V,\CV)\colon  \mathcal{C}_\CV^k((-\infty,0)\times V,F)\rightarrow \mathcal{C}_\CV^k(\RR\times V,F),\qquad f\mapsto \tilde{f}.
\end{align}
We observe the following:
\begingroup
\setlength{\leftmargini}{12pt}
{
\begin{itemize}
\item
\label{kjdskjdskjdskjds0}
 	It is clear from the construction that \eqref{kjdskjdsjkskjsdkjdskj} is a linear map, with  
 	\begin{align*}
	\ext{\ExOp_{-\infty,\tau,0}(E,V,\CV)(f)}{\ell}|_{(-\infty,0]\times \CV\times\diffspace[E]^\ell}&=\ext{f}{\ell}\\[1pt]
	\ext{\ExOp_{-\infty,\tau,0}(E,V,\CV)(f)}{\ell}|_{[-\tau,\infty)\times \CV\times \diffspace[E]^\ell}&=0
\end{align*}
for each $f\in \mathcal{C}_\CV^k((-\infty,0)\times V,F)$ and $0\leq \ell\llleq k$ (for the second line use the last statement in Lemma \ref{kkjdsjdkjdsksd}).
\item
\label{kjdskjdskjdskjds1}
	Let $E,\bar{E}\in \HLCV$, $\mathcal{W}\subseteq E$ a linear subspace and $\Upsilon \colon \mathcal{W}\rightarrow \bar{E}$ a linear map. Let $(V,\CV)\in \Pairr(E)$, $(\bar{V},\bar{\CV})\in \Pairr(\bar{E})$, $x\in \CV$, $\bar{x}\in \bar{\CV}$, $f\in \mathcal{C}^k_{\CV}(\RR\times V,F)$, $\bar{f}\in \mathcal{C}_{\bar{\CV}}^k(\RR\times \bar{V},F)$, and $0\leq \dind\llleq k$ be given with  
\begin{align*}
	\ext{f}{\ell}\cp \mathcal{W}([\tau,0],x,\ell)=\ext{\bar{f}}{\ell}\cp \mathcal{W}_\Upsilon([\tau,0],\bar{x},\ell)\qquad\quad\forall\: 0\leq \ell\leq \dind.
\end{align*} 
Then, it is clear from the construction that 
\begin{align*}
	\Psi^\dind[f,-2^j]\cp \mathcal{W}((0,\infty),x,\dind)=\Psi^\dind[\bar{f},-2^j]\cp \mathcal{W}_\Upsilon((0,\infty),\bar{x},\dind)
\end{align*}
holds for each $j\in \NN$, hence
\begin{align*}
	\ext{\ExOp_{-\infty,\tau,0}(E,V,\CV)(f)}{\dind}\cp \mathcal{W}([\tau,\infty),x,\dind)&= \ext{\ExOp_{-\infty,\tau,0}(\bar{E},\bar{V},\bar{\CV})(\bar{f})}{\dind}\cp \mathcal{W}_\Upsilon([\tau,\infty),\bar{x},\dind).
\end{align*}
\end{itemize}}
\endgroup
\noindent
To establish Theorem \ref{aaaaakdskdsjkkjds}, it thus remains to prove Lemma \ref{iufdiufdiufd} (see Sect.\ \ref{kjdskjsdkjdsds}), as well as the continuity estimates in Part \ref{aaaaakdskdsjkkjds1} of Theorem \ref{aaaaakdskdsjkkjds} (see Sect.\ \ref{kjdskjdskjkjdskjdsuidsiuds}).

\subsection{The Proof of Lemma \ref{iufdiufdiufd}}
\label{kjdskjsdkjdsds}
Let $E\in \HLCV$, $(V,\CV)\in \Pair$, $f\in \mathcal{C}_{\CV}^k((-\infty,0)\times V,F)$, $x\in \CV$, $\pp\in \Sem{F}$ be given. 
The following estimates hold for each $\dvar\leq -1$: 
\begingroup
\setlength{\leftmargini}{17pt}
{
\renewcommand{\theenumi}{\bf\small\alph{enumi})} 
\renewcommand{\labelenumi}{\theenumi}
\begin{enumerate}
\item
\label{sddsds1}
Since $\ext{f}{0}$ is continuous, and since $[\tau,0]$ is compact, there exists $C_0\geq 1$ and a neighbourhood $U_x\subseteq \CV$ of $x$, with
\begin{align}
\label{iufdiufdiufdiufdi}
	\pp(\ext{f}{0}(t,x'))\hspace{1.1pt}\leq\hspace{1.5pt} C_0\qquad\quad\forall\: t\in [\tau,0],\: x'\in U_x.\hspace{22.5pt} 
\end{align}
We obtain from \eqref{lksdlklkdsdsdsds}, \eqref{oidssdlksdklsdklsdsd0}, and  \eqref{iufdiufdiufdiufdi} that
\begin{align}
\label{iufdiuiufduiuifdiufdiufdiu}
	\pp(\Psi^0[f,\dvar](t,x'))\leq  C_0\qquad\quad \forall\: t\in (0,\infty),\: x'\in U_x. \hspace{4pt}
\end{align}
\vspace{-18pt} 
\item
\label{sddsds2}
Let $1\leq \ell\llleq k$ and $\uw=(w_1,\dots,w_\ell)\in \diffspace[E]^\ell$ be given. 
\begingroup
\setlength{\leftmarginii}{12pt}
{
\begin{itemize}
\item[$\circ$]
	According to Point \ref{sddsds1} and Corollary \ref{kndskjdskjsdcxcx}, there exists a neighbourhood $U_x\subseteq \CV$ of $x$, $\wt{C}_\ell\geq 1$, and $\qq\in \Sem{E}$, such that we have
\begin{align}
\label{iufdiufdiufdiufdik}
	\pp(\ext{f}{0}(t,x'))\leq \wt{C}_\ell\qquad\quad\forall\: t\in [\tau,0],\: x'\in U_x 
\end{align}
as well as
\begin{align}
\label{hjfdhjfddffderer}
	\pp(\ext{f}{q}((t,x'),\uw'))\leq  \wt{C}_\ell\cdot \maxn[|\cdot|,\qq](w'_1)\cdot {\dots}\cdot \maxn[|\cdot|,\qq](w'_q)
\end{align}
for each $t\in [\tau,0]$, $x'\in U_x$, $1\leq q\leq \ell$, and $\uw'=(w'_1,\dots,w'_q)\in \diffspace[E]^q$.
\vspace{4pt}
\item[$\circ$]
	We obtain for $0\leq q\leq \ell$ from \eqref{lksdlklkdsdsdsds}, \eqref{kjskjdsjkjdskkjdskjdskjds}, \eqref{oidssdlksdklsdklsdsd1}, \eqref{iufdiufdiufdiufdik}, \eqref{hjfdhjfddffderer} that
\begin{align*}
	\pp\big(\LLambda_{\ell,q}[f,\dvar]((t,x'),\uw')\big)
	\textstyle\leq
	|\dvar|^\ell&\cdot (\ell+1)!\cdot \max(|\mathrm{I}_{\ell,0}|,\dots,|\mathrm{I}_{\ell,\ell}|) \\[1pt]
	& \cdot M_\ell \cdot   \wt{C}_\ell\cdot \max(1,\maxn[|\cdot|,\qq](w'_1),{\dots}, \maxn[|\cdot|,\qq](w'_\ell))^\ell  
\end{align*}
holds for each $t\in (0,\infty)$, $x'\in U_x$, and $\uw'=(w'_1,\dots,w'_\ell)\in \diffspace[E]^\ell$. 
	We define
\begin{align}
\label{nmnmcxnmcxnmcxniuiu}
	\textstyle Q_\ell:=   (\ell+1)\cdot(\ell+1)!\cdot \max(|\mathrm{I}_{\ell,0}|,\dots,|\mathrm{I}_{\ell,\ell}|)\cdot  M_\ell\cdot \wt{C}_\ell\geq \wt{C}_\ell,
\end{align}
and obtain for $t\in (0,\infty)$, $x'\in U_x$, $\uw'=(w'_1,\dots,w'_\ell)\in \diffspace[E]^\ell$ from \eqref{oidssdlksdklsdklsdsd} that
\begin{align}
\label{jkdkjfdkjfdfdfd}
	\pp\big(\Psi^\ell[f,\dvar]((t,x'),\uw')\big)\textstyle\leq
	  |\dvar|^\ell\cdot Q_\ell 
	\cdot \max(1,\maxn[|\cdot|,\qq](w'_1),{\dots}, \maxn[|\cdot|,\qq](w'_\ell))^\ell.
\end{align} 
\item[$\circ$]
We define $C_\ell:= Q_\ell \cdot  \max(1,\maxn[|\cdot|,\qq](w_1)+1,{\dots}, \maxn[|\cdot|,\qq](w_\ell)+1)^\ell$, as well as
\begin{align*}
	O_{\uw}&:=\{(\uw'_1,\dots,\uw'_\ell)\in H[E]^\ell\:|\: \maxn[|\cdot|,\qq](\uw'_p-\uw_p)<1\:\:\text{ for }\:\: p=1,\dots,\ell\}.
\end{align*}
We have by \eqref{hjfdhjfddffderer}, \eqref{nmnmcxnmcxnmcxniuiu} (used for the first line), and \eqref{jkdkjfdkjfdfdfd} (used for the second line) that
\begin{align}
	\label{oifdoifiofdoifd3dpopopomm}
	\begin{split}
	\pp(\ext{f}{\ell}((0,x),\uw))&\leq  C_\ell\\[1pt]
	\textstyle\pp(\Psi^\ell[f,\dvar]((t,x'),\uw'))&\textstyle\leq  C_\ell\cdot |\dvar|^\ell\qquad\quad\forall\: t\in (0,\infty),\: x'\in U_x,\:\uw'\in O_{\uw}.
	\end{split}
\end{align}
\end{itemize}}
\endgroup
\end{enumerate}}
\endgroup
\noindent
We are ready for the proof of Lemma \ref{iufdiufdiufd}.
\begin{proof}[Proof of Lemma \ref{iufdiufdiufd}]
Let $E\in \HLCV$, $(V,\CV)\in \Pair$, $f\in \mathcal{C}_{\CV}^k((-\infty,0)\times V,F)$, $x\in \CV$, $\pp\in \Sem{F}$, and $\varepsilon>0$ be given. We discuss the cases $\ell=0$ and $1\leq \ell\llleq k$ separately:
\begingroup
\setlength{\leftmargini}{12pt}
\begin{itemize}
\item
Let $\ell=0$. 
We choose $C_0\geq 1$ and $U_x\subseteq \CV$ as in \ref{sddsds1}.   
By Property \ref{zoiuzuiuuuzzuzuziizzuzu2}, there exists $N\in \NN$ with $\sum_{j=N+1}^\infty |c_j|<\frac{\varepsilon}{4C_0}$. We obtain from  \eqref{iufdiufdiufdiufdi}, \eqref{iufdiuiufduiuifdiufdiufdiu} and the triangle inequality that
\begin{align*}
	\textstyle \sum_{j=N+1}^\infty |c_j|\cdot \pp\big(\Psi^0[f,-2^j](t,x')-\ext{f}{0}(0,x)\big)< \frac{\varepsilon}{2}\qquad\quad\forall\: t\in (0,\infty),\: x'\in U_x.
\end{align*}
Since $\sum_{j=0}^\infty c_j=1$ holds by Property  \ref{zoiuzuiuuuzzuzuziizzuzu1}, the triangle inequality yields
\begin{align}
\label{jfdkjfdkjdkjdffd1}
\begin{split}
	\textstyle\pp\big(\sum_{j=0}^\infty c_j \cdot\Psi^0&[f,-2^j](t,x')-\ext{f}{0}(0,x)\big)\\[1pt]
	&\textstyle= \pp\big(\sum_{j=0}^\infty c_j\cdot \Psi^0[f,-2^j](t,x')-\sum_{j=0}^\infty c_j\cdot\ext{f}{0}(0,x)\big)\\[1pt]
	&\textstyle\leq \pp\big(\sum_{j=0}^N c_j\cdot \Psi^0[f,-2^j](t,x')-\sum_{j=0}^N c_j\cdot\ext{f}{0}(0,x)\big)\\
	&\quad\he \textstyle+ \sum_{j=N+1}^\infty |c_j|\cdot \pp\big(\Psi^0[f,-2^j](t,x')-\ext{f}{0}(0,x)\big)\\[1pt]
	&\textstyle< \sum_{j=0}^N |c_j|\cdot\pp\big(\Psi^0[f,-2^j](t,x')-\ext{f}{0}(0,x)\big)+\frac{\varepsilon}{2}
\end{split}
\end{align}
for $t\in (0,\infty)$ and $x'\in U_x$. We observe the following:
\begingroup
\setlength{\leftmarginii}{12pt}
{
\begin{itemize}
\item[$\circ$]
	By \eqref{lksdlklkdsdsdsds} and \eqref{oidssdlksdklsdklsdsd0}, we have for $0\leq j\leq N$:
\begin{align*}
	\Psi^0[f,-2^j](t,x')=\ext{f}{0}(-2^j\cdot t,x')\qquad\quad\forall\: t\in (0,|\upsilon|/2^N),\: x'\in \CV.
\end{align*}
\item[$\circ$]
 Since $\ext{f}{0}$ is continuous, we can shrink $U_x\subseteq \CV$ around $x$ and fix $0<\delta<|\upsilon|/2^N$, such that   
\begin{align*}
	\textstyle\pp(\ext{f}{0}(-2^j\cdot t,x')-\ext{f}{0}(0,x))< \frac{\varepsilon}{2\cdot(N+1)\cdot\max(1,|c_0|,\dots,|c_N|)}
\end{align*}
holds for $j=0,\dots,N$, 
for all $t\in (0,\delta)$ and $x'\in U_x$.
\end{itemize}}
\endgroup
\noindent
Combining both points with \eqref{jfdkjfdkjdkjdffd1}, we obtain
\begin{align*}
	\textstyle\pp\big(\sum_{j=0}^\infty c_j \cdot\Psi^0[f,-2^j](t,x')-\ext{f}{0}(0,x)\big)< \varepsilon\qquad\quad\forall\: t\in (0,\delta),\: x'\in U_x. 
\end{align*}
\item
Let $1\leq \ell\llleq k$ and $\uw\in \diffspace[E]^\ell$ be fixed. We choose $C_\ell\geq 1$, $U_x\subseteq \CV$, and $O_{\uw}\subseteq H[E]^\ell$ as in \ref{sddsds2}, and define
\begin{align*}
	\textstyle\Xi[j] := \sum_{p=0}^\ell  \sum_{\ul{\sigma}\in \mathrm{I}_{\ell,p}} (-2^j)^p\cdot \CChi_{\ell,z_1,\dots,z_p}(\uw)\cdot \ext{f}{\ell}((0,x), \wv_{\ell, o_1}(\uw),\dots,\wv_{\ell,o_{\ell-p}}(\uw),\EINS_p)	
\end{align*}
for each $j\in \NN$. 
We observe the following:
\begingroup
\setlength{\leftmarginii}{12pt}
{
\begin{itemize}
\item[$\circ$]
Given $\Delta>0$, Property \ref{zoiuzuiuuuzzuzuziizzuzu1} provides some $N_\Delta\in \NN$ with
\begin{align*}
	\textstyle\big|\sum_{j=0}^{N} c_j\cdot((-2^j)^p-1) \big|<\Delta\qquad\quad\forall\: N\geq N_\Delta,\:  p=0,\dots ,\ell. 
\end{align*}
By \eqref{iuiufddiufddfdkkiiuuiiuk}, there thus exists some $\tilde{N}\in \NN$ with
\begin{align}
\label{kdskjdskjdskjdsjkds}
	\textstyle\pp\big(\sum_{j=0}^N c_j\cdot\Xi[j]-\sum_{j=0}^N c_j\cdot\ext{f}{\ell}((0,x),\uw)\big)<\frac{\varepsilon}{3}\qquad\quad\forall\: N\geq \tilde{N}.
\end{align}
\item[$\circ$]
By Property \ref{zoiuzuiuuuzzuzuziizzuzu2}, there exists some $N\geq \tilde{N}$  with 
\begin{align*}
	\textstyle \sum_{j=N+1}^\infty |c_j|\cdot (2^j)^q\textstyle<\frac{\varepsilon}{6 C_\ell}\qquad\quad\forall\: q=0,\dots,\ell.
\end{align*}
We obtain from \eqref{oifdoifiofdoifd3dpopopomm} and the triangle inequality that
\begin{align}
\label{lkjdslkdsdslwqwqwq}
\begin{split}
	\textstyle \sum_{j=N+1}^\infty |c_j|\cdot \pp\big(\Psi^\ell[f,-2^j]((t,x'),\uw')&-\ext{f}{\ell}((0,x),\uw)\big)\\[0.5pt]
	&\textstyle\leq C_\ell\cdot\sum_{j=N+1}^\infty |c_j|\cdot ((2^j)^\ell+1)\textstyle< \frac{\varepsilon}{3}
\end{split}
\end{align}
\vspace{-10pt}

\noindent
holds for all $t\in (0,\infty)$, $x'\in U_x$, $\uw'\in O_{\uw}$.
\item[$\circ$]
By \eqref{lksdlklkdsdsdsds}, \eqref{oidssdlksdklsdklsdsd1}, \eqref{oidssdlksdklsdklsdsd}, for $0\leq j\leq N$, $t\in (0,|\upsilon|/2^N)$, $x'\in \CV$, and $\uw'\in H[E]^\ell$ we have
\begin{align*}
	\Psi^\ell[f,-2^j]&((t,x'),\uw')\\[1pt]
	&=\Theta_{\ell,0}[f,-2^j]((-2^j\cdot t,x'),\uw')\\[1pt]
	&\textstyle=\sum_{p=0}^\ell  \sum_{\ul{\sigma}\in \mathrm{I}_{\ell,p}} (-2^j)^p\cdot   \CChi_{\ell,z_1,\dots,z_p}(\uw')\\[2pt]
	&\:\hspace{108.5pt} \cdot\ext{f}{\ell} ((-2^j\cdot t,x'), \wv_{\ell, o_1}(\uw'),\dots,\wv_{\ell,o_{\ell-p}}(\uw'),\EINS_p).
\end{align*}
Since $\ext{f}{\ell}$ is continuous,  
we can shrink $U_x\subseteq \CV$ around $x$ as well as $O_{\uw}$ around $\uw$, and furthermore fix $0<\delta<|\upsilon|/2^N$, such that 
\begin{align*}
	\textstyle\pp\big(\Psi^\ell[f,-2^j]((2^j\cdot t,x'),\uw')-\Xi[j]\big)< \frac{\varepsilon}{3(N+1)\cdot\max(1,|c_0|,\dots,|c_N|)}
\end{align*}
holds for $t\in (0,\delta)$, $x'\in U_x$, $\uw'\in O_{\uw}$, and $j=0,\dots,N$. We obtain
\begin{align}
\label{kjdskjdskjdskjnbcxnb}
\begin{split}
	\textstyle\pp\big(\sum_{j=0}^N c_j\cdot \Psi^\ell[f,-2^j]&((t,x'),\uw')-\textstyle\sum_{j=0}^N c_j\cdot\Xi[j]\big)\\[1pt]
	&\leq \textstyle\sum_{j=0}^N |c_j|\cdot \pp\big(\Psi^\ell[f,-2^j]((t,x'),\uw')-\Xi[j]\big)<\frac{\varepsilon}{3}
\end{split}
\end{align}
for all $t\in (0,\delta)$, $x'\in U_x$, $\uw'\in O_{\uw}$.
\end{itemize}}
\endgroup
\noindent
Since $\sum_{j=0}^\infty c_j=1$ holds by Property \ref{zoiuzuiuuuzzuzuziizzuzu1}, the triangle inequality together with \eqref{kdskjdskjdskjdsjkds}, \eqref{lkjdslkdsdslwqwqwq}, \eqref{kjdskjdskjdskjnbcxnb} yields 
\begin{align*}
	\textstyle\pp\big(\sum_{j=0}^\infty c_j \cdot\Psi^\ell&[f,-2^j]((t,x'),\uw')-\ext{f}{\ell}((0,x),\uw)\big)\\[2pt]
	&\textstyle= \pp\big(\sum_{j=0}^\infty c_j\cdot \Psi^\ell[f,-2^j]((t,x'),\uw')-\sum_{j=0}^\infty c_j\cdot\ext{f}{\ell}((0,x),\uw)\big)\\[3pt]
	&\textstyle\leq\pp\big(\sum_{j=0}^N c_j\cdot \Psi^\ell[f,-2^j]((t,x'),\uw')-\sum_{j=0}^N c_j\cdot\ext{f}{\ell}((0,x),\uw)\big)\\[1pt]
	&\quad\he + \textstyle\sum_{j=N+1}^\infty |c_j|\cdot \pp\big(\Psi^\ell[f,-2^j]((t,x'),\uw')-\ext{f}{\ell}((0,x),\uw)\big)\\[3pt]
	&\textstyle\leq\pp\big(\sum_{j=0}^N c_j\cdot \Psi^\ell[f,-2^j]((t,x'),\uw')-\textstyle\sum_{j=0}^N c_j\cdot\Xi[j]\big)\\[1pt]
	&\textstyle\quad\he + \pp\big(\sum_{j=0}^N c_j\cdot\Xi[j]-\sum_{j=0}^N c_j\cdot\ext{f}{\ell}((0,x),\uw)\big)\\[1pt]	
	&\quad\he + \textstyle\sum_{j=N+1}^\infty |c_j|\cdot \pp\big(\Psi^\ell[f,-2^j]((t,x'),\uw')-\ext{f}{\ell}((0,x),\uw)\big)\\
	&\textstyle< \varepsilon
\end{align*}
for all $t\in (0,\delta)$, $x'\in U_x$, $\uw'\in O_{\uw}$. \qedhere
\end{itemize}
\endgroup
\end{proof}

\subsection{The Proof of Theorem \ref{aaaaakdskdsjkkjds}.\ref{aaaaakdskdsjkkjds1}}
\label{kjdskjdskjkjdskjdsuidsiuds}
Let $E\in \HLCV$, $(V,\CV)\in \Pair$, $t\in (-\infty,0)$, $x\in \CV$, $\bounded\subseteq E$ bounded, $\pp\in \Sem{F}$, and $f\in \mathcal{C}_{\CV}^k((-\infty,0)\times V,F)$. We recall \eqref{dljfdkjfdlkjfdkjd} as well as the seminorms in  \eqref{dsoidsoioidsoidsoidsoidsseminorms}. 
The following estimates hold for each $\dvar\leq -1$:
\begingroup
\setlength{\leftmargini}{12pt}{
\begin{itemize}
\item
By \eqref{lksdlklkdsdsdsds} and \eqref{oidssdlksdklsdklsdsd0}, we have
\vspace{-5pt} 
\begin{align}
\label{nmvcnmvcnmvcvcvc}
	\pp(\Psi^0[f,\dvar](t,x))\leq\pp^0_{[\tau,0]\times \{x\}}(f).   
\end{align}
\vspace{-20pt}
\item
Let $1\leq \dind\llleq k$. Then, for $1\leq \ell\llleq \dind$ and  $0\leq q\leq \ell$, we have (recall \eqref{lksdlklkdsdsdsds}, \eqref{kjskjdsjkjdskkjdskjdskjds}, \eqref{oidssdlksdklsdklsdsd1}, \eqref{nmvcnmvcnmvcvcvc})
\vspace{-3pt}
\begin{align}
\label{kjfdkjkjdskjkjdssd}
\begin{split}
	\pp\big(\LLambda_{\ell,q}[f,\dvar]((t,x),\uw)\big)
	\leq
	(\ell+1)! &\cdot |\dvar|^\ell\cdot \max(|\mathrm{I}_{\ell,0}|,\dots,|\mathrm{I}_{\ell,\ell}|)\\
	 &\cdot  M_\ell\cdot \max(1,|\cchi_{\ell,1}(\uw)|,\dots,|\cchi_{\ell,\ell}(\uw)|)^\ell
\\	
	&\cdot  \pp^\dind_{[\tau,0]\times \{x\}\times \boundf(\bounded)}(f)
\end{split}    
\end{align}
for each $\uw\in (\RR\times \bounded)^\ell$. 
We define 
\begin{align}
\label{kjdskjfdiufdiufiufduiuifdodfio}
	\textstyle Q_\dind:=(\dind+1) \cdot (\dind+1)!\cdot M_\dind\cdot \max\big(\he|\mathrm{I}_{\ell,p}\he | \:\big|\: 1\leq \ell\leq \dind,\: 0\leq p\leq \ell\he\big)\geq 1,
\end{align}   
and obtain for $1\leq \ell\leq \dind$ from \eqref{oidssdlksdklsdklsdsd} and \eqref{kjfdkjkjdskjkjdssd} that 
\begin{align}
	\label{oifdoifiofdoifd3}
\begin{split}
	\pp\big(\Psi^\ell[f,\dvar]((t,x),\uw)\big)\textstyle\leq |\dvar|^\ell\cdot Q_\dind 
	\cdot  \max(1,|\cchi_{\ell,1}(\uw)|,\dots,|\cchi_{\ell,\ell}(\uw)|)^\ell\cdot  \pp^\dind_{ [\tau,0]\times \{x\}\times \boundf(\bounded)}(f)\quad 
\end{split} 
\end{align}
holds for each $\uw\in (\RR\times \bounded)^\ell$.
\end{itemize}}
\endgroup
\noindent
We are ready for the proof of Theorem \ref{aaaaakdskdsjkkjds}.\ref{aaaaakdskdsjkkjds1}.
\begin{proof}[Proof of Theorem \ref{aaaaakdskdsjkkjds}.\ref{aaaaakdskdsjkkjds1}]
Let $E\in \HLCV$, $(V,\CV)\in \Pair$, $t\in (-\infty,0)$, $x\in \CV$, $\bounded\subseteq E$ bounded, $\pp\in \Sem{F}$, and $f\in \mathcal{C}_{\CV}^k((0,\infty)\times V,F)$.
\begingroup
\setlength{\leftmargini}{12pt}
\begin{itemize}
\item
Let $\dind=0$. 
By Property \ref{zoiuzuiuuuzzuzuziizzuzu2} we have $C_0:=\sum_{j=0}^\infty|c_j|<\infty$. 
We obtain from \eqref{nmvcnmvcnmvcvcvc} and the triangle inequality that
\begin{align*}
	\pp\big(\ext{\ExOp_{-\infty,\tau,0}(E,V,\CV)(f)}{0}(t,x)\big)\textstyle\leq  C_0\cdot \pp_{[\tau,0]\times\{x\}}(f).
\end{align*}
\item
Let $1\leq \dind\llleq k$. We choose $Q_\dind\geq 1$ as in \eqref{kjdskjfdiufdiufiufduiuifdodfio}, and define 
\vspace{-4pt}
\begin{align}
\label{sdpopodspodsopdsopdspods98ds98ds98sd98sd98ds98s}
	\textstyle C_\dind:=  Q_\dind\cdot \max_{1\leq \ell\leq \dind}\big(\sum_{j=0}^\infty |c_j|\cdot (2^j)^\ell\big)\stackrel{\mathrm{\ref{zoiuzuiuuuzzuzuziizzuzu2}}}{<}\infty. 
\end{align}
Then $C_\dind\geq 1$ holds, as we have $Q_\dind\geq 1$ as well as $\sum_{j=0}^\infty|c_j|\cdot (2^j)^\ell\geq \sum_{j=0}^\infty c_j=1$ for $1\leq \ell\leq \dind$ by Property \ref{zoiuzuiuuuzzuzuziizzuzu1}.   
We obtain from \eqref{oifdoifiofdoifd3} that
\begin{align*}
	\pp\big(\ext{\ExOp_{-\infty,\tau,0}&(E,V,\CV)(f)}{\ell}((t,x),\uw)\big)\\
	&\leq C_\dind\cdot \max(1,|\cchi_{\ell,1}(\uw)|,\dots,|\cchi_{\ell,\ell}(\uw)|)^\ell\cdot  \pp^\dind_{ [\tau,0]\times \{x\}\times \boundf(\bounded)}(f)
\end{align*}
holds for each $1\leq \ell\leq \dind$ and $\uw\in (\RR\times \bounded)^\ell$.\qedhere
\end{itemize}
\endgroup
\end{proof}

\section*{Acknowledgements}
The author thanks Helge Gl\"ockner for general remarks and discussions. This research was supported by the Deutsche Forschungsgemeinschaft, DFG, project number HA 8616/1-1.

\addtocontents{toc}{\protect\setcounter{tocdepth}{0}}
\appendix

\section*{APPENDIX}

\section{Some Details to Example \ref{sdkjdskjkjdskjsdkdsdsds}.\ref{sdkjdskjkjdskjsdkdsdsds2}}
\label{kjdkjdskjsdkj}
Let $(\hilbert,\innpr{\cdot}{\cdot})$ be a real pre-Hilbert space, $E\in \HLCV$ and $H=\hilbert \times E$. We set $\xi(\cdot):=\sqrt{\innpr{\cdot}{\cdot}}$, fix $0<\tau<1$, and define $\hsphereb$ and $\hspherecb$ as in Remark \ref{dsoioidsdsoioidsoidsoidsdsdsdsds}.  
Given $Z\in\mathcal{A}=\xi^{-1}(\{1\})$, we set 
\begin{align*}
	Z_\bot&:=\hspace{0.7pt}\{X\in \hilbert\:|\: \innpr{Z}{X}=0\},\\ 
	\disk(Z)&:= \{X\in Z_\bot\:|\: \hiln{X}<1\},\\
	\cone(Z)&:=\hspace{0.7pt}\{X\in \hilbert\:|\: \hiln{X}>0\quad\wedge\quad\innpr{Z}{X}>0\}.
\end{align*} 
The following maps are smooth and inverse to each other:
\begin{align*}
	\textstyle\psi_Z\colon& \hspace{43pt}\cone(Z) \rightarrow (0,\infty)\times \disk(Z),\qquad\quad
	\textstyle X\mapsto \big(\hiln{X}, \frac{1}{\hiln{X}}\cdot X  -  \frac{1}{\hiln{X}}\cdot\innpr{X}{Z}\cdot Z\big)\\[4pt]
	\phi_Z\colon& (0,\infty)\times \disk(Z)\rightarrow \cone(Z)\qquad\quad\quad\hspace{20.6pt} 
	\textstyle (t,Y)\mapsto t\cdot \big(Y + \sqrt{1-\hiln{Y}^2} \cdot Z\big). 
\end{align*}
Now, given $g\in \mathcal{C}_{\hspherecb\times \CV}^k(\hsphereb\times V,F)$, the same arguments as in Remark \ref{dsoioidsdsoioidsoidsoidsdsdsdsds} show that it suffices to construct  
\begin{align}
\label{jjkdsjkkdsdsds}
\text{an extension}\quad\:\:\tilde{f}\in \mathcal{C}_{\CV}^k(\Zero\times V,F)\quad\:\:\text{of the restriction}\quad\:\:	f:= g|_{\hsphere\times V}\in \mathcal{C}_{\hspherec\times \CV}^k(\hsphere\times V,F)
\end{align} 
in order to obtain an extension $\tilde{g}\in \mathcal{C}_{ \CV}^k(\hilbert\times V,F)$ of $g$. 
For this, we proceed as follows:
\begingroup
\setlength{\leftmargini}{12pt}
{
\begin{itemize}
\item
We have by Lemma \ref{lkjfdlkjfdlkjfdlkjfd} 
\begin{align*}
	f_Z:=f\cp(\phi_Z|_{(0,1)\times \disk(Z)}\times \id_V)&\in \mathcal{C}_{(0,1]\times \disk(Z)\times \CV}^k((0,1)\times \disk(Z)\times V,F)
\end{align*}
with $\ext{f_Z}{0}=\ext{f}{0}\cp (\phi_Z|_{(0,1]\times \disk(Z)}\times \id_\CV)$. 
\item
First applying the extension operator $\ExOp_{0,\tau,1}(Z_\bot \times E,\disk(Z)\times V,\disk(Z)\times \CV)$ from Theorem \ref{aaaaakdskdsjkkjds}, and then composing with $\psi_Z\times \id_V$, we obtain (from Lemma \ref{lkjfdlkjfdlkjfdlkjfd}) an extension 
\begin{align*}
	\tilde{f}_Z\in \mathcal{C}^k_{\cone(Z)\times \CV}(\cone(Z)\times V,F)\qquad\text{ of the restriction }\qquad f|_{(\hsphere\he\cap\he \cone(Z))\times V}.
\end{align*}
\vspace{-16pt}
\item
	Given $Z'\in \mathcal{A}$ and $X\in \cone(Z)\cap \cone(Z')$, then the definitions (and continuity) ensure that for $Y:=(\pr_2\cp\psi_Z)(X)$ and $Y':=(\pr_2\cp \psi_{Z'})(X)$, we have 
\begin{align*}
	\ext{f_Z}{0}(t,Y,x)= \ext{f}{0}(t/\xi(X),X,x)= \ext{f_{Z'}}{0}(t,Y',x)\qquad\quad\forall\: t\in [\tau,1],\: x\in \CV.
\end{align*}	
Theorem \ref{aaaaakdskdsjkkjds}.\ref{aaaaakdskdsjkkjds3} implies $\tilde{f}_Z|_{((0,\infty)\he \cdot\he X)\times V}=\tilde{f}_{Z'}|_{((0,\infty)\he\cdot\he X)\times V}$, and we conclude
\begin{align*} 
	\tilde{f}_Z|_{\cone(Z)\he \cap\he \cone(Z')}=\tilde{f}_{Z'}|_{\cone(Z)\he \cap\he \cone(Z')}.
\end{align*}
Since $U:=\cone(Z)\cap \cone(Z')$ is open, we obtain for $0\leq \ell\llleq k$:
\begin{align*}
	\dd^\ell\tilde{f}_Z|_{U\times V\times H^\ell}=\dd^\ell\tilde{f}_{Z'}|_{U\times V\times H^\ell}\quad\:\:\stackrel{\text{continuity}}{\Longrightarrow}\quad\:\: \ext{\tilde{f}_Z}{\ell}|_{U\times \CV\times H^\ell}= \ext{\tilde{f}_{Z'}}{\ell}|_{U\times \CV\times H^\ell}.
\end{align*}
\end{itemize}}
\endgroup
\vspace{-6pt}

\noindent
It follows that the maps $\{\tilde{f}_Z\}_{Z\in \mathcal{A}}$ glue together to an extension $\tilde{f}\in \mathcal{C}^k_{\CV}(\Zero\times V,F)$ of \eqref{jjkdsjkkdsdsds}.


\begin{thebibliography}{10}

\bibitem{AS}
{H. Alzaareer, A. Schmeding: {\it Differentiable mappings on products with different degrees of differentiability in the two factors}, Expo. Math. {\bf  33}  (2015), 184--222.}

\bibitem{FRE}
{L. Frerick: {\it Extension operators for spaces of infinite differentiable Whitney jets}, J. reine angew. Math. {\bf  602} (2007), 123--154.}

\bibitem{HG}
{H. Gl\"ockner: {\it Infinite-dimensional Lie groups without completeness restrictions}, Geometry and Analysis on Finite- and
Infinite-dimensional Lie Groups (B\c edlewo 2000), Banach Center Publ. {\bf 55}, Polish Academy of Sciences, Warsaw (2002),
43–59.}

\bibitem{HA}
{R. Hamilton: {\it The inverse function theorem of Nash and Moser}, Bull. Amer. Math. Soc. (N. S.) {\bf 7} (1982), no.\ 1, 65--222.}

\bibitem{RGM}
{M. Hanusch: {\it Regularity of Lie groups}, Commun. Anal. Geom. {\bf  30}  (2022), no.\ 1, 53-152.     
}

\bibitem{HJS}
{E. O. Hjelle, A. Schmeding: {\it Strong topologies for spaces of smoothmaps with infinite-dimensional target}, Expo. Math.\ {\bf 35} (2017), no.\ 1, 13--53.}

\bibitem{COS}
{A. Kriegl and P. W. Michor: {\it The Convenient Setting of Global Analysis}, Math. Surveys Monogr. 53, American Mathematical
Society, Providence, 1997.}

\bibitem{MARG}
{J. Margalef-Roig and E. Outerelo Dominguez: {\it Differential Topology}, North-Holland, Amsterdam, 1992.}

\bibitem{MV}
{R. Meise and D. Vogt: {\it Introduction to Functional Analysis}, Oxf. Grad. Texts Math. 2, Oxford University, New York, 1997.}

\bibitem{MICHM}
{P. W. Michor: {\it Manifolds of differentiable mappings}, Shiva Mathematics Series, vol. 3 (ShivaPublishing Ltd., Nantwich, 1980).}

\bibitem{MICH}
{P. W. Michor: {\it Manifolds of Mappings for Continuum Mechanics}, Geometric Continuum Mechanics (Segev, R., Epstein, M., eds.),  vol {\bf 42}, Birkhäuser, June 2020, pp. 3–75.\\ Preprint:\ 1909.00445 (math.DG), 2019.} 

\bibitem{MIL}
{J. Milnor: {\it Remarks on infinite-dimensional Lie groups}. In: Relativity, Groups and Topology. II (Les Houches 1983),
North-Holland, Amsterdam (1984), 1007-1057.}

\bibitem{KHN}
{K.-H. Neeb: {\it Infinite-dimensional groups and their representations}. In: Infinite Dimensional K\"ahler Manifolds (Oberwolfach
1995), DMV Sem. 31, Birkh\"auser, Basel (2001), 131–178.}

\bibitem{KHN2}
{K.-H. Neeb: {\it Towards a Lie theory of locally convex groups}, Jpn. J. Math. {\bf 1} (2006), no.\ 2, 291--468.}

\bibitem{KHNM} 
{K.-H. Neeb: {\it Infinite-Dimensional Lie Groups.} 3rd cycle. Monastir (Tunisie), 2005, pp.76. cel-00391789}

\bibitem{SCHM}
{D. M. Roberts, A. Schmeding: {\it Extending Whitney's extension theorem: nonlinear function spaces}. Preprint:\ 1801.04126v4 (math.DG), 2020.}

\bibitem{SEELEY}
{R. T. Seeley: {\it Extension of $C^\infty$ functions defined in a half space}, Proc. Amer. Math. Soc.\ {\bf 15} (1964), 625--626.}

\bibitem{Tidt}
{M. Tidten: {\it Fortsetzungen von $C^\infty$-Funktionen, welche auf einer abgeschlossenen
Menge in $\RR^n$ definiert sind}, Manuscr. Math. {\bf 27} (1979), 291--312.}

\bibitem{WALTER}
{B. Walter: {\it Weighted diffeomorphism groups of Banach spaces and weighted mapping groups}, DISS MATH {\bf 484} (2012), 1--126.}

\bibitem{WHITNEY}
{H. Whitney: {\it Analytic Extensions of Differentiable Functions Defined in Closed Sets}, Trans. Amer. Math. Soc., {\bf 36}, (1934), no.\ 1, 63--89.}

\end{thebibliography}
\end{document}